\documentclass{article}
\usepackage{abstract}
\usepackage{arxiv}

\usepackage[utf8]{inputenc} 
\usepackage[T1]{fontenc}    
\usepackage{hyperref}       
\usepackage{url}            
\usepackage{booktabs}       
\usepackage{amsfonts}       
\usepackage{nicefrac}       
\usepackage{microtype}      
\usepackage{lipsum}		
\usepackage{graphicx}
\usepackage[numbers]{natbib}
\usepackage{doi}
\usepackage{amsmath}
\usepackage{amsthm}
\usepackage{bbm}
\usepackage{caption}
\usepackage{subcaption}
\DeclareMathOperator*{\argmax}{arg\,max}
\DeclareMathOperator*{\argmin}{arg\,min}
\usepackage{multirow}
\usepackage{longtable}
\usepackage{float}
\usepackage{xcolor} 
\usepackage[ruled,vlined]{algorithm2e}
\usepackage{comment}

\definecolor{ForestGreen}{RGB}{34,139,34} 

\title{Optimal local interventions in the two-dimensional Abelian Sandpile model}

\date{\today}	

\author{ \href{https://orcid.org/0000-0000-0000-0000}{\includegraphics[scale=0.06]{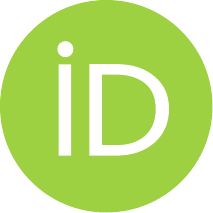}\hspace{1mm}Maike C. de Jongh}$^{a}$\\
	\texttt{m.c.dejongh@utwente.nl} \\
	\And
	\href{https://orcid.org/0000-0000-0000-0000}{\includegraphics[scale=0.06]{orcid.pdf}\hspace{1mm}Richard J. Boucherie}$^{a}$ \\
	\texttt{r.j.boucherie@utwente.nl} \\[0.2in]
	\mbox{}$^{a}$Department of Applied Mathematics\\
	University of Twente\\
	P.O. Box 217, NL-7500 AE Enschede \\[0.1in]
	\mbox{}$^{b}$Centrum Wiskunde \& Informatica\\
	P.O. Box 94079, NL-1090 GB Amsterdam
   	\And
    \href{https://orcid.org/0000-0000-0000-0000}{\includegraphics[scale=0.06]{orcid.pdf}\hspace{1mm}M.N.M. van Lieshout}$^{b,a}$\\ 
	\texttt{m.n.m.van.lieshout@cwi.nl} \\ 
}

\renewcommand{\shorttitle}{Optimal local interventions in the two-dimensional Abelian sandpile model}

\hypersetup{
pdftitle={A template for the arxiv style},
pdfsubject={q-bio.NC, q-bio.QM},
pdfauthor={David S.~Hippocampus, Elias D.~Striatum},
pdfkeywords={First keyword, Second keyword, More},
}

\allowdisplaybreaks

\begin{document}

\theoremstyle{plain}
\newtheorem{axiom}{Axiom}
\newtheorem{claim}[axiom]{Claim}
\newtheorem{theorem}{Theorem}[section]
\newtheorem{lemma}[theorem]{Lemma}
\newtheorem{corollary}[theorem]{Corollary}

\theoremstyle{remark}
\newtheorem{definition}[theorem]{Definition}
\newtheorem*{example}{Example}
\newtheorem*{fact}{Fact}
\newtheorem{assumption}{Assumption}
\newtheorem{remark}{Remark}

\maketitle
\begin{abstract}
The Abelian sandpile model serves as a canonical example of self-organized criticality. This critical behavior manifests itself through large cascading events triggered by small perturbations. Such large-scale events, known as avalanches, are often regarded as stylized representations of catastrophic phenomena, such as earthquakes or forest fires. Motivated by this perspective, we study strategies to reduce avalanche sizes. We provide a first rigorous analysis of the impact of interventions in the Abelian sandpile model, considering a setting in which an external controller can perturb a configuration by removing sand grains at selected locations. We first develop and formalize an extended method to compute the expected size of an avalanche originating from a connected component of critical vertices, i.e., vertices at maximum height. Using this method, we characterize the structure of avalanches starting from square components and explicitly analyze the effect of interventions in such components. Our results show that the optimal intervention locations strike an interesting balance between reduction of largest avalanche sizes and increasing the number of mitigated avalanches.  
\end{abstract}

\noindent\textbf{Keywords:} Abelian sandpile model, self-organized criticality, avalanche dynamics, avalanche size, optimal control

\section{Introduction}
\label{Introduction}
The Abelian sandpile model was proposed by \citet{Bak1, Bak2} to study systems driven by small external forces that organize themselves by means of energy-dissipating avalanches. In this model, grains of sand are successively dropped on randomly chosen vertices of a graph. Whenever a vertex accumulates too many grains, it topples and transfers them to its neighbours, which may trigger a cascade involving many other vertices. An interesting aspect of the model is that critical behaviour emerges without the need to tune any external parameter. This phenomenon, known as self-organized criticality, manifests itself in a range of real-world systems, including earthquakes \citep{Scholz}, forest fires \citep{Drossel}, financial markets \citep{Bartolozzi}, and the evolution of species \citep{Bak3}. In many of these applications, large avalanches may have destructive consequences. For this reason, a growing body of literature has focused on the use of intervention heuristics for controlling the size of avalanches. Several studies provide an empirical investigation of avalanche propagation in cases where an external controller can influence the landing position of the next sand grain \citep{Cajueiro1, Falk, Noel, Parsaeifard, Turalska}. In contrast, \citet{Qi} point out that in many cases the origin of an avalanche is uncontrollable and therefore focus on control strategies that intervene once an avalanche has started. In particular, they study a situation in which vertices can be prevented from toppling, even when the number of sand grains exceeds their capacity. Other intervention strategies that have been explored mitigate the risk of large-scale avalanches by deliberately triggering smaller ones in a controlled way \citep{Cajueiro2, Cajueiro3, Hou}. Another line of research explores the impact of sand grain absorption on avalanche sizes \citep{Scala}. 

In contrast to the studies mentioned above, which provide numerical results on control strategies, we present a rigorous analysis of the impact of strategic interventions. In particular, we consider a scenario in which an external controller can remove sand grains from selected vertices. In our analysis, we focus on connected components of vertices that hold the maximum amount of sand grains, called \textit{generators}, and investigate how removing sand grains affects avalanches originating from these generators. Our contribution is twofold. First, we formalize and extend the method proposed by \citet{Dorso} for computing the expected size of an avalanche starting from a given generator. We identify the cases in which their method does not apply and propose appropriate modifications. In addition, we provide a formal justification that the resulting algorithm indeed yields the correct result for all possible generators. Second, we consider generators in the shape of squares and provide rigorous results on the effect of removing sand grains, identifying optimal targets for interventions. The motivation for analyzing such generators is that, when surrounded by noncritical vertices, they allow for a local analysis. The boundary of noncritical vertices prevents the propagation of avalanches beyond the square. Consequently, the internal avalanche dynamics are unaffected by the remainder of the configuration. This enables us to characterize the structure of avalanches originating from the square. Moreover, the size of avalanches in this setting provides a natural upper bound for avalanches in any square-shaped region surrounded by noncritical vertices.

The structure of this paper is as follows. In Section \ref{Definition of the Abelian sandpile model}, we give a definition of the Abelian sandpile model. Section~\ref{Avalanche development} presents our adaptation of the method of \citet{Dorso} for computing the expected avalanche size corresponding to a particular generator. This method makes use of the decomposition of an avalanche into a sequence of waves, introduced by \citet{Ivashkevich}. We explain how our extension ensures that the method becomes applicable in all cases and we provide a proof of correctness. In Section \ref{Reducing avalanche sizes through strategic interventions} we explore the impact of strategic interventions on expected avalanche sizes. We derive rigorous results on the impact of removing sand grains from vertices in square-shaped generators by studying the structure of the waves that constitute an avalanche. Also, we identify the set of vertices that are the optimal targets for the removal of sand grains in this case. Some proofs are deferred to the Appendix. 
Finally, we reflect on our results in Section \ref{Conclusion} and indicate some directions for future research.

\section{Definition of the Abelian sandpile model}
\label{Definition of the Abelian sandpile model}
We consider the Abelian sandpile model on a graph that is constructed from a finite, two-dimensional box $V =~\{1,\ldots, L\}^2 \cap \mathbb{Z}^2$ in the following way. First, all vertices in the set $V^c = \mathbb{Z}^2 \setminus V$ are identified with a single vertex $s$, called the \textit{sink}. Next, all self-loops at $s$ are removed. We refer to the resulting graph as the \textit{wired lattice} of size $L$. By construction, the corner vertices of the box are connected to the sink by two edges, whereas each of the remaining boundary vertices has exactly one edge connecting it to the sink. A \textit{sandpile}, denoted by a map $\eta: V \rightarrow \mathbb{Z}$, is a configuration of indistinguishable particles, called \textit{sand grains}, on this wired lattice.
Let the set of all sandpiles on~$V$ be denoted by $\mathcal{X}$. We define the \textit{mass} $m(\eta)$ of a sandpile $\eta \in \mathcal{X}$ as the total number of sand grains in the sandpile~$\eta$. Furthermore, we call a vertex $v \in V$ \textit{stable} in a sandpile $\eta \in \mathcal{X}$ if $0 \leq \eta(v) < 4$. If all vertices $v \in V$ are stable in $\eta$, then $\eta$ is called a \textit{stable sandpile}. We denote the set of stable sandpiles on $V$ by $\Omega$. A vertex $v \in V$ is \textit{critical in} $\eta \in \Omega$ if it contains three sand grains, i.e., if $\eta(v) = 3$. A vertex $v \in V$ that satisfies $\eta(v) < 3$ is called \textit{noncritical in $\eta$}.

The \textit{sandpile Markov chain} evolves according to the following dynamics: each time step, a new sand grain is dropped on a vertex $v \in V$ selected uniformly at random. If this new sand grain causes the vertex $v$ to become unstable, then each of the sand grains located at $v$ will be sent to one of its horizontal and vertical neighbours, an operation called \textit{toppling}. Unstable vertices now continue to topple until a stable sandpile is reached, a process called \textit{relaxation}. Sand grains that end up in the sink during the relaxation are discarded. To formalize these dynamics, we label the vertices as $V = \{v_1, v_2, \ldots, v_{L^2}\}$ and write $v_i \sim v_j$ if vertices $v_i$ and $v_j$ are neighbours. Then, we define the $L^2 \times L^2$ matrix $\Delta$, of which the $i$th row/column corresponds to vertex $v_i$, as 
\begin{equation*}
    \Delta_{ij} = \begin{cases}
        4, &\text{if } i = j, \\
        -1, &\text{if } i \neq j, \quad v_i \sim v_j, \\
        0, &\text{otherwise.}
    \end{cases}
\end{equation*}
We now define several operators on sandpiles $\eta \in \mathcal{X}$. 
First, let $\alpha_i: \mathcal{X} \rightarrow \mathcal{X}$ denote an addition operator, which adds a sand grain to vertex $v_i \in V$, i.e.,
    \begin{equation*}
        (\alpha_i \eta)(v_j) = \begin{cases}
            \eta(v_j) + 1, &\text{if } i = j, \\
            \eta(v_j), &\text{otherwise}.
        \end{cases}
    \end{equation*}
Second, we define a removal operator $\gamma_i: \mathcal{X} \rightarrow \mathcal{X}$, which removes all sand grains from vertex $v_i \in V$, i.e.,
    \begin{equation*}
        (\gamma_i \eta)(v_j) = \begin{cases}
            0, &\text{if } i = j, \\
            \eta(v_j), &\text{otherwise.}
        \end{cases}
    \end{equation*}
Third, let $\tau_i: \mathcal{X} \rightarrow \mathcal{X}$ denote a toppling operator, which represents the procedure of \textit{toppling} a vertex $v_i$, i.e.,
    \begin{equation*}
        (\tau_i \eta)(v_j) = \eta(v_j) - \Delta_{ij}.
    \end{equation*}
    We call a toppling of a vertex $v_i \in V$ \textit{legal} with respect to a sandpile $\eta \in \mathcal{X}$ if $\eta(v_i) > 3$. \citet{Dhar} showed that toppling operators commute. It follows from similar reasoning that toppling operators commute with addition operators. 
    
Fourth, we define an identity operator $I: \mathcal{X} \rightarrow \mathcal{X}$, which maps a sandpile configuration to itself, i.e., $I \eta = \eta$.

Finally , let $R$ denote a relaxation operator, which takes as input a sandpile $\eta \in \mathcal{X}$ and outputs the stable sandpile $\eta' = R \eta$ that results from toppling unstable vertices until a stable sandpile is reached. As shown by \citet{Dhar}, each sequence of topplings of unstable vertices is finite and results in the same unique stable sandpile, which ensures that the operator $R$ is well-defined. Hence, 
    \begin{equation*}
        R\eta = \tau_{i_k} \tau_{i_{k-1}} \cdots \tau_{i_1} \eta,
    \end{equation*}
    where $(\tau_{i_1}, \tau_{i_2}, \ldots, \tau_{i_k})$, $k = 0, 1, \ldots$, is any sequence of legal topplings such that $\tau_{i_k} \tau_{i_{k-1}} \cdots \tau_{i_1} \eta$ is a stable sandpile configuration.  

The dynamics of the sandpile Markov chain can now be expressed in terms of the following transition probability kernel:
\begin{equation*}
    \mathbb{P}(\eta, \eta') = \dfrac{1}{L^2} \sum\limits_{i = 1}^{L^2}  \mathbbm{1}[\eta' = R\alpha_i \eta], \quad \eta, \eta' \in \Omega.
\end{equation*}
Our main variable of interest is the \textit{avalanche size}. \citet{Dhar} showed that the number of times a vertex $v_j \in V$ topples during the avalanche induced by dropping a grain at vertex $v_i \in V$ onto a sandpile $\eta$ is the same for each sequence of legal topplings leading from $\alpha_i \eta$ to $R \alpha_i \eta$. Let this number be denoted by $n(v_i, v_j; \eta)$. We define the \textit{size} $x(\eta, v_i)$ \textit{of the avalanche} caused by dropping a sand grain at vertex $v_i \in V$ onto sandpile $\eta$ as the total number of topplings that occur during the relaxation of the pile $\alpha_i \eta$, i.e., 
\begin{equation*}
    x(\eta, v_i) := \sum\limits_{j = 1}^{L^2} n(v_i, v_j; \eta).
\end{equation*}
Let the (random) size of the next avalanche that will occur given a sandpile $\eta \in \Omega$ be denoted by $X(\eta)$.
Note that 
\begin{equation*}
    \mathbb{P}(X(\eta) = k) = \dfrac{1}{L^2} \sum\limits_{i=1}^{L^2}  \mathbbm{1}[x(\eta, v_i) = k], \quad k = 0, 1, \ldots.
\end{equation*}
In this work, we are particularly interested in the avalanche size conditional on the event that the next sand grain lands in a subset $A \subseteq V$. 

\section{Avalanche development}
\label{Avalanche development}
In order to analyze the development of avalanches and investigate prevention strategies, we build on the approach for computing the expected avalanche size proposed by \citet{Dorso}. Their method relies on the representation of an avalanche as a sequence of waves, a concept introduced by \citet[p.~350]{Ivashkevich}. Such a representation is established by choosing a specific type of sequence of legal topplings to relax an unstable sandpile, namely as follows. Suppose that $\eta \in \mathcal{X}$ is a sandpile that satisfies $\eta(v_i) = 3$ for some $v_i \in V$ and $\eta(v_j) \leq 3$ for all $v_j \in V$, $v_j \neq v_i$. We now drop a new sand grain at vertex $v_i$ and choose any sequence of legal topplings that begins by toppling vertex $v_i$ and then proceeds to topple unstable vertices in the set $V \setminus \{v_i\}$ until all vertices are stable, except possibly for $v_i$. Hence, let $(\tau_{i_1}, \tau_{i_2}, \ldots, \tau_{i_k})$ denote a sequence of topplings such that $i_1 = i$ and $i_j \neq i$ for all $j = 2, \ldots, k$, the toppling $\tau_{i_j}$, $j = 2, \ldots, k$, is legal for the sandpile $\tau_{i_{j-1}} \cdots \tau_{i_1} \alpha_i \eta$ and $\tau_{i_k} \cdots \tau_{i_1} \alpha_i \eta(v_j)  \leq 3$ for all $j \neq i$. \citet{Ivashkevich} show that $\{i_1, i_2, \ldots, i_k\}$ is a set of $k$ unique vertices. Hence, no vertex topples twice during the sequence of topplings $(\tau_{i_1}, \tau_{i_2}, \ldots, \tau_{i_k})$. The set of vertices $W_{i1}(\eta) := \{i_1, i_2, \ldots, i_k\}$ is called the \textit{first wave} of the avalanche generated by dropping a sand grain at vertex $v_i$ onto $\eta$. Let $R^{\eta}_{i1}$ denote the operator that yields the sandpile that results from toppling the vertices in the first wave, i.e.,
\begin{equation*}
    R^{\eta}_{i1} \alpha_i \eta := \tau_{i_k} \cdots \tau_{i_1}\alpha_i \eta.
\end{equation*}
If $R^{\eta}_{i1} \alpha_i \eta(v_i) > 3$, then relaxing this sandpile requires toppling vertex $v_i$ a second time. Repeating the same procedure yields the \textit{second wave} $W_{i2}(\eta)$ and corresponding operator $R^{\eta}_{i2}$. Since any sandpile can be stabilized through a finite number of topplings \citep{Dhar}, continuing this process yields a finite number of waves $W_{i1}(\eta), \ldots, W_{in}(\eta)$ and corresponding operators $R^{\eta}_{i1}, \ldots, R^{\eta}_{in}$, resulting in a stable sandpile $R^{\eta}_{in} \cdots R^{\eta}_{i1} \alpha_i \eta$. 

We proceed to discuss the key ideas of the method proposed by \citet{Dorso}, identify its limitations, and develop an extension that works in full generality, along with a formal justification. Suppose that dropping a sand grain at a critical vertex $v_i \in V$ onto a stable sandpile $\eta \in \Omega$ yields an avalanche that can be represented as a sequence of $n(\eta, v_i)$ waves. Let $w_{ij}(\eta)$ denote the size of the $j$th wave, i.e., $w_{ij}(\eta) := |W_{ij}(\eta)|$, $j = 1, \ldots, n(\eta, v_i)$. Note that the size of the avalanche generated by dropping a sand grain at $v_i$ onto $\eta$ is given by 
\begin{equation}\label{sum_of_waves}
    x(\eta, v_i) = \sum\limits_{j = 1}^{n(\eta, v_i)} w_{ij}(\eta).
\end{equation}
Let $A \subseteq V$ denote the connected component of critical vertices in $\eta$ that contains $v_i$. We refer to such a connected component as a \textit{generator}. Note that each vertex in $A$ will topple during the first wave, i.e., $A \subseteq W_{i1}(\eta)$. To see this, suppose that some vertex $v \in A$ does not topple during the first wave. Since $\eta(v) = 3$, this implies that none of the neighbours of $v$ topples during the first wave. Continuing this argument and using the fact that $v$ and $v_i$ are in the same connected component of critical vertices, this implies that $v_i$ does not topple in the first wave, which yields a contradiction.
\citet{Dorso} claim that the first wave of the avalanche caused by dropping a sand grain at vertex $v \in A$ is the same for each $v \in A$, a statement we formally prove in Lemma \ref{first_wave}. Let $W_A(\eta)$ denote the first wave of an avalanche arising from dropping a sand grain on some vertex $v \in A$ and let $w_A(\eta)$ denote its size. These are guaranteed to be well-defined by Lemma \ref{first_wave}. Now, let $(x_1, \ldots, x_k)$ be a permutation of the elements of $W_A(\eta)$. The key idea underlying the method of \citet{Dorso} is the fact that toppling operators and addition operators commute and therefore $\tau_{x_k} \cdots \tau_{x_1} \alpha_i \eta = \alpha_i \tau_{x_k} \cdots \tau_{x_1} \eta$. They argue that it suffices to study the sandpile configuration $\eta^A$, defined as $\eta^A = \tau_{x_k} \cdots \tau_{x_1} \eta$. By the fact that topplings commute, $\eta^A$ does not depend on the specific choice of permutation and is therefore well-defined. Note that the topplings in the sequence $(\tau_{x_1}, \ldots, \tau_{x_k})$ are not necessarily legal with respect to $\eta$ and it is possible that there exists a $v \in V$ such that $\eta^A(v) < 0$. An implicit assumption in the approach of \citet{Dorso} is that the sizes of the second and higher order waves are equal as well for all vertices $v \in A$ at which the next sand grain may land, allowing for expressions such as \citep[expression (20)]{Dorso}. This is the case if the set $\{v \in A|\eta^A(v) = 3\}$ is connected. In general, however, this is not guaranteed. Figure \ref{Dorso_Dadamia_failure} displays a sandpile $\eta$ and generator $A$ (outlined in bold in the left panel) for which the set $\{v \in A| \eta^A(v) = 3\}$ splits into two components (shown in the right panel). If a sand grain lands on a vertex in the upper component, it produces a second wave of size 2, whereas hitting the lower component generates a second wave of size 1.

\begin{figure}
   \begin{subfigure}[b]{0.5\textwidth}
        \centering
        \includegraphics[width=0.6\textwidth]{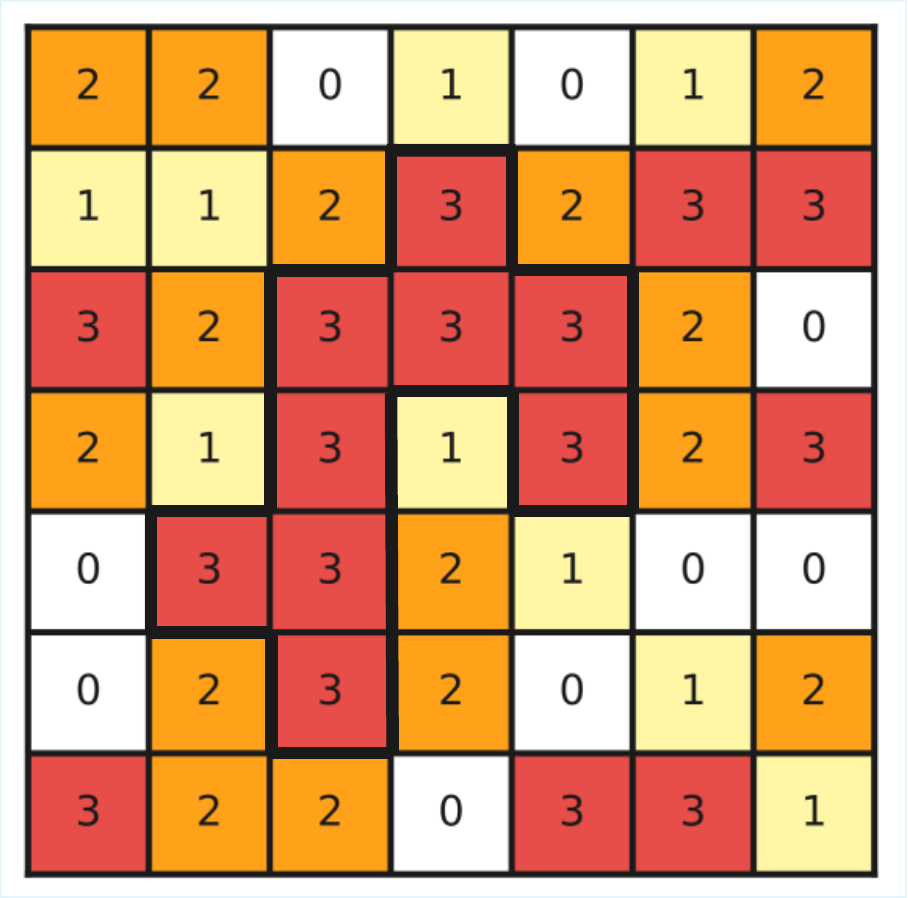}
        \label{DorsoDadamia_failure_example_1}
    \end{subfigure}
    \begin{subfigure}[b]{0.5\textwidth}  
        \centering 
        \includegraphics[width=0.6\textwidth]{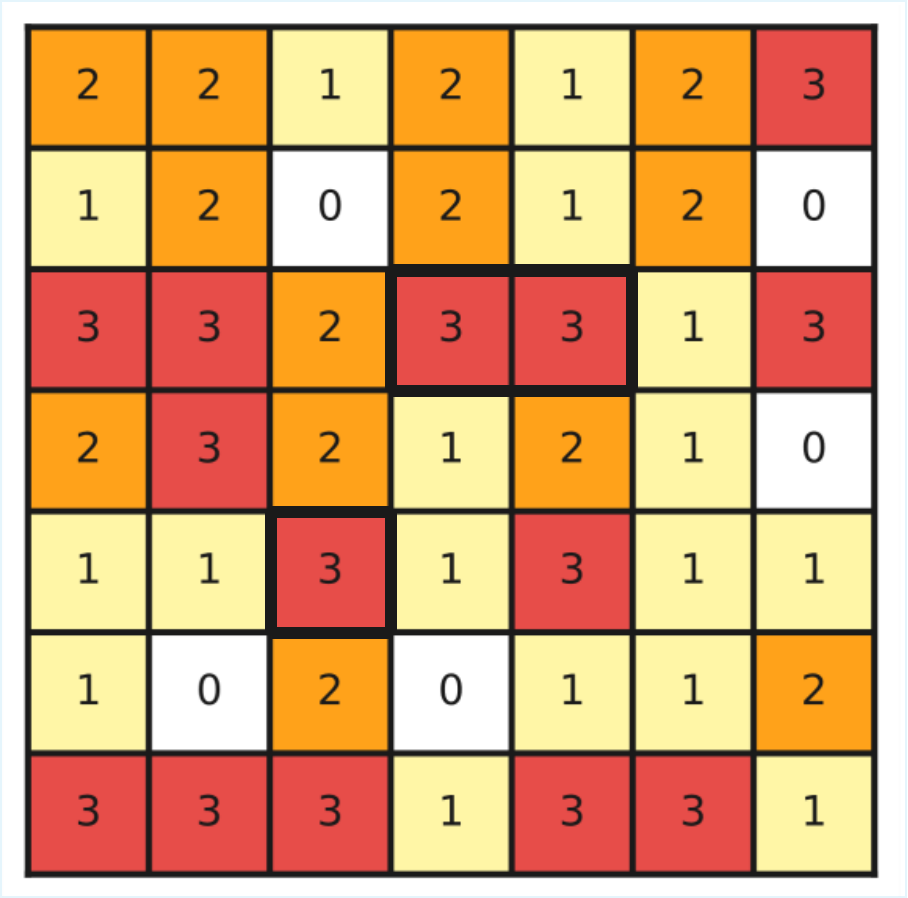}
        \label{DorsoDadamia_failure_example_2}
    \end{subfigure}
    \caption{Example to which the method of \citet{Dorso} does not apply. The generator outlined in bold splits into two components after the first wave, each producing second waves of different sizes.} 
    \label{Dorso_Dadamia_failure}
    \end{figure}

We now propose a modification of the method of \citet{Dorso}, which applies in full generality, along with a formal justification. We start by proving that the first wave of the avalanche produced by dropping a sand grain at a vertex $v \in A$ is the same for each $v \in A$.
\newpage

\begin{lemma}\label{first_wave}
    Given a stable sandpile configuration $\eta \in \Omega$ and a generator $A = \{v_{i_1}, v_{i_2}, \ldots, v_{i_k}\} \subseteq V$, we have $W_{i_{\ell}1}(\eta) = W_{i_m1}(\eta)$ for all $\ell, m = 1, \ldots, k$.  
\end{lemma}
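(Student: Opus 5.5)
We will show that the first wave started at any vertex $v_i \in A$ is the set of topplings produced by a process that makes no reference to $v_i$. Fix $v_i, v_j \in A$. By the discussion preceding the lemma, $A \subseteq W_{i1}(\eta)$, so $v_j$ topples in the first wave from $v_i$. The plan is to prove $W_{j1}(\eta) \subseteq W_{i1}(\eta)$; exchanging the roles of $i$ and $j$ then gives equality.

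\textbf{Step 1: a source-free description of the wave.} In the first wave from $v_i$, every vertex other than $v_i$ topples at most once (Ivashkevich et al.), and $v_i$ topples exactly once. Hence, when a vertex $u \neq v_i$ topples, it has received exactly one grain from each neighbour in the wave that toppled before it. A vertex $u \neq v_i$ therefore eventually topples if and only if $\eta(u)$ plus the number of its toppling neighbours is at least $4$.

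For a set $S \ni v_i$, write $d_S(u)$ for the number of neighbours of $u$ lying in $S$. We obtain that $W=W_{i1}(\eta)$ satisfies:
\begin{itemize}
\item[(a)] it is stable-closed: for every $u \notin W$ we have $\eta(u) + d_W(u) \le 3$;
\item[(b)] it is the smallest set containing $v_i$ with property (a).
\end{itemize}
Property (b) follows by induction along the toppling order. If $S \ni v_i$ satisfies (a) and $u$ is the first vertex of the wave outside $S$, then all neighbours of $u$ that toppled before $u$ lie in $S$. The grains they sent make $u$ unstable, contradicting (a) for $S$.

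\textbf{Step 2: exploit criticality of $A$.} Let $S = W_{i1}(\eta)$. Then $S$ contains $v_j$, and $S$ satisfies (a), since after the wave every vertex outside $S$ is stable. Consider the wave from $v_j$. The same minimality argument, run with source $v_j$, shows $W_{j1}(\eta) \subseteq S$.

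The only point needing care is the source vertex $v_j$ itself. It receives the added grain and topples first, so it becomes unstable regardless of its neighbours. This is harmless because $v_j \in S$ by the earlier observation that $A \subseteq W_{i1}(\eta)$.

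Symmetrically, $W_{i1}(\eta) \subseteq W_{j1}(\eta)$ holds because $v_i \in A \subseteq W_{j1}(\eta)$.

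\textbf{Main obstacle.} The delicate step is Step 1(b): making rigorous that a wave topples each non-source vertex at most once, and that the wave is exactly the minimal closed set. I would argue by induction on the toppling sequence, using only that topplings are legal and that the non-source vertices start with at most $3$ grains.

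One point needs care: the source $v_i$ may receive grains back after toppling, but it does not topple again within the wave by definition. The closure condition (a) is imposed only on vertices $u \notin W$, and $v_i \in W$, so this does not interfere. The argument therefore avoids Dhar's abelian property entirely, apart from the well-definedness of $W_{i1}(\eta)$ as a set, which is already asserted in the paper.
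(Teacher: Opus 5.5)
Your proof is correct, and it takes a genuinely different route from the paper's.

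The paper argues directly with toppling sequences. It reorders the wave from $v_{i_\ell}$ so that all of $A$ topples first. By commutativity of toppling and addition operators, $\alpha_{i_\ell}\tau_{x_k}\cdots\tau_{x_1}\eta$ and $\alpha_{i_m}\tau_{y_k}\cdots\tau_{y_1}\eta$ differ only at $v_{i_\ell}$ and $v_{i_m}$. Hence the remaining topplings $\tilde{W}_{i_\ell 1}(\eta)$ stay legal and leave every vertex other than $v_{i_m}$ stable, so they form a first wave from $v_{i_m}$; the case $\tilde{W}_{i_\ell 1}(\eta)=\emptyset$ is handled separately. Both the reordering and the final identification of the wave set tacitly use order-independence of the wave, i.e.\ the abelian property.

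You instead characterize $W_{i1}(\eta)$ as the least set $S\ni v_i$ with $\eta(u)+d_S(u)\le 3$ for all $u\notin S$. Since this closure condition does not mention the source, mutual inclusion finishes the proof, and criticality and connectivity of $A$ enter only through $A\subseteq W_{i1}(\eta)$.

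What your route buys:
\begin{itemize}
\item It needs no commutation, no reordering of the wave and no case split.
\item It gives order-independence of the wave set for free, since two realized waves from the same source each contain the other. So you do not need to borrow well-definedness from the paper after all.
\item It identifies exactly what step~4 of Algorithm~\ref{Avalanche_analysis} computes greedily, namely the least closed set containing $A$. This would shorten the first part of the proof of Theorem~\ref{rec_to_alg}.
\end{itemize}

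The paper's argument, in exchange, stays closer to the toppling dynamics. It shows directly that the two post-wave configurations differ only by one grain moved from $v_{i_\ell}$ to $v_{i_m}$, which is what underlies the common configuration $\eta^A$. With your set equality this is immediate anyway, since every vertex of the common wave topples exactly once.

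The at-most-once property you flag as the main obstacle is used by both proofs, and your planned induction works. Take the first moment some non-source vertex $u$ topples a second time. It must have received at least $8-\eta(u)\ge 5$ grains, but it has at most four neighbours, each of which has toppled at most once so far.
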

\begin{proof}
    Consider $v_{i_{\ell}}, v_{i_m} \in A$. Let $W_{i_{\ell}1}(\eta) = A \cup \tilde{W}_{i_{\ell}1}(\eta)$ and $W_{i_m1}(\eta) = A \cup \tilde{W}_{i_m1}(\eta)$, where $\tilde{W}_{i_{\ell}1}(\eta), \tilde{W}_{i_m1}(\eta) \subseteq V \setminus A$. Suppose first that $\tilde{W}_{i_{\ell}1}(\eta) = \emptyset$. We show that this implies that $\tilde{W}_{i_m1}(\eta) = \emptyset$. Since each vertex in $A$ will topple exactly once during the first wave and $\tilde{W}_{i_{\ell}1}(\eta) = \emptyset$ \citep{Ivashkevich}, there exists a sequence of legal topplings $(\tau_{x_1}, \tau_{x_2}, \ldots, \tau_{x_k})$ acting on $\alpha_{i_{\ell}}\eta$, where $(x_1, \ldots, x_k)$ is a permutation of the set $\{i_1, \ldots, i_k\}$, such that $\eta' = \tau_{x_k} \cdots \tau_{x_1} \alpha_{i_{\ell}} \eta$ is a sandpile that satisfies $\eta'(v) \leq 3$ for all $v \neq v_{i_{\ell}}$. Note that $\tau_{x_k} \cdots \tau_{x_1} \alpha_{i_{\ell}} \eta = \alpha_{i_{\ell}} \tau_{x_k} \cdots \tau_{x_1} \eta$, by the fact that toppling operators and addition operators commute. Since each vertex in $A$ toppled exactly once, we have $\eta'(v_{i_{\ell}}) \leq 4$. Also, since $A$ is a connected component there exists a sequence of legal topplings $(\tau_{y_1}, \tau_{y_2}, \ldots, \tau_{y_k})$ acting on $\alpha_{i_m} \eta$, where $(y_1, \ldots, y_k)$ is again a permutation of the set $\{i_1, \ldots, i_k\}$. Let $\tilde{\eta} = \tau_{y_k} \cdots \tau_{y_1} \alpha_{i_m} \eta = \alpha_{i_m} \tau_{y_k} \cdots \tau_{y_1} \eta$. Since topplings commute, we have $\tau_{x_k} \cdots \tau_{x_1} \eta = \tau_{y_k} \cdots \tau_{y_1} \eta$. It now follows from the fact that $\eta'$ satisfies $\eta'(v) \leq 3$ for all $v \neq v_{i_{\ell}}$ and $\eta'(v_{i_{\ell}}) \leq 4$ that $\tilde{\eta}(v) \leq 3$ for all $v \neq v_{i_m}$. Thus, $\tilde{W}_{i_m1}(\eta) = \emptyset$. 

    Now, suppose that $\tilde{W}_{i_{\ell}1}(\eta) = \{v_{j_1}, v_{j_2}, \ldots, v_{j_n}\}$. Since $A$ is a connected component of critical vertices, there exists a sequence of legal topplings $(\tau_{x_1}, \tau_{x_2}, \ldots, \tau_{x_k})$ acting on $\alpha_{i_{\ell}}\eta$, where $(x_1, \ldots, x_k)$ is a permutation of the set $\{i_1, \ldots, i_k\}$. Hence, there exists a sequence of legal topplings $(\tau_{x_1}, \tau_{x_2}, \ldots, \tau_{x_k}, \tau_{\tilde{x}_1}, \tau_{\tilde{x}_2}, \ldots, \tau_{\tilde{x}_n})$ acting on $\alpha_{i_{\ell}} \eta$, where $(\tilde{x}_1, \tilde{x}_2, \ldots, \tilde{x}_n)$ is a permutation of the set $\{j_1, \ldots, j_n\}$, such that $\eta' := \tau_{\tilde{x}_n} \cdots \tau_{\tilde{x}_1} \tau_{x_k} \cdots \tau_{x_1} \alpha_{i_{\ell}} \eta$ is a sandpile configuration that satisfies $\eta'(v) \leq 3$ for all $v \neq v_{i_{\ell}}$. Since each vertex in $W_{i_{\ell}1}$ topples only once, we obtain $\eta'(v_{i_{\ell}}) \leq 4$. Again using the fact that $A$ is a connected component of critical vertices, we can construct a sequence of legal topplings $(\tau_{y_1}, \tau_{y_2}, \ldots, \tau_{y_k})$ acting on $\alpha_{i_m} \eta$, where $(y_1, \ldots, y_k)$ is a permutation of the set $\{i_1, \ldots, i_k\}$. Now, note that $\eta' = \tau_{\tilde{x}_n} \cdots \tau_{\tilde{x}_1} \tau_{x_k} \cdots \tau_{x_1} \alpha_{i_{\ell}} \eta = \tau_{\tilde{x}_n} \cdots \tau_{\tilde{x}_1} \alpha_{i_{\ell}} \tau_{x_k} \cdots \tau_{x_1} \eta$. By the fact that topplings commute, the sandpile configurations $\alpha_{i_{\ell}} \tau_{x_k} \cdots \tau_{x_1} \eta$ and $\alpha_{i_m} \tau_{y_k} \cdots \tau_{y_1}\eta$ only differ at vertices $v_{i_{\ell}}$ and $v_{i_m}$. Since $\{v_{j_1}, \ldots, v_{j_n}\} \subseteq V \setminus A$ and thus $\{v_{j_1}, \ldots, v_{j_n}\} \cap \{v_{i_{\ell}}, v_{i_m}\} = \emptyset$, it follows that $(\tau_{\tilde{x_1}}, \tau_{\tilde{x_2}}, \ldots, \tau_{\tilde{x}_n})$ is a sequence of legal topplings for the sandpile configuration $\alpha_{i_m} \tau_{y_k} \cdots \tau_{y_1} \eta$. Let $\tilde{\eta} := \tau_{\tilde{x}_n} \cdots \tau_{\tilde{x}_1} \alpha_{i_m} \tau_{y_k} \cdots \tau_{y_1} \eta =  \tau_{\tilde{x}_n} \cdots \tau_{\tilde{x}_1} \tau_{y_k} \cdots \tau_{y_1} \alpha_{i_m} \eta$. Note that $\eta' = \alpha_{i_{\ell}} \tau_{\tilde{x}_n} \cdots \tau_{\tilde{x}_1} \tau_{x_k} \cdots \tau_{x_1} \eta$ and $\tilde{\eta} = \alpha_{i_m} \tau_{\tilde{x}_n} \cdots \tau_{\tilde{x}_1} \tau_{y_k} \cdots \tau_{y_1} \eta$. From the facts that topplings commute, $\eta'(v) \leq 3$ for all $v \neq v_{i_{\ell}}$ and $\eta'(v_{i_{\ell}}) \leq 4$, it now follows that $\tilde{\eta}(v) \leq 3$ for all $v \neq v_{i_m}$ and $\tilde{\eta}(v_{i_m}) \leq 4$. Thus, $\tilde{W}_{i_m1}(\eta) = \tilde{W}_{i_{\ell}1}(\eta)$. 
\end{proof}

Given a generator $A \subseteq V$ in a sandpile configuration $\eta \in \Omega$, we proceed to establish a recursive expression for computing $\mathbb{E}[X(\eta)|Y \in A]$, where $Y$ denotes the random vertex at which the next sand grain lands. Let $W_A(\eta)$ denote the first wave of an avalanche arising from dropping a sand grain on some vertex $v \in A$ and let $w_A(\eta)$ denote its size. These are guaranteed to be well-defined by Lemma \ref{first_wave}. Now, let $(x_1, \ldots, x_k)$ be a permutation of the elements of $W_A(\eta)$ and let the sandpile configuration $\eta^A$ be defined as $\eta^A = \tau_{x_k} \cdots \tau_{x_1} \eta$. By the fact that topplings commute, $\eta^A$ does not depend on the specific choice of the permutation and is therefore well-defined. Note that the topplings in the sequence $(\tau_{x_1}, \ldots, \tau_{x_k})$ are not necessarily legal with respect to $\eta$ and it is possible that there exists a $v \in V$ such that $\eta^A(v) < 0$. We now define the set $\tilde{A}$ as
\begin{equation*}
    \tilde{A} = \{v \in A| \eta^A(v) < 3\}.    
\end{equation*}
Furthermore, if the set $A' := \{v \in A|\eta^A(v) = 3\}$ is nonempty, let $A_1, \ldots, A_{\ell}$ denote its connected components. Lemma \ref{exp_av_size_rec_thm} now provides a recursive expression for computing $\mathbb{E}[X(\eta)|Y \in A]$.

\begin{lemma}\label{exp_av_size_rec_thm}
  Let $\eta \in \Omega$ denote a stable sandpile configuration and let $A \subseteq V$ denote a generator in $\eta$. Let $Y$ denote the random vertex at which the next sand grain lands. The expected value of the size of the next avalanche corresponding to $A$ satisfies
  \begin{equation}\label{exp_av_size_rec_eq}
      \mathbb{E}[X(\eta)|Y \in A] = \begin{cases}
          w_A(\eta) + \sum\limits_{j=1}^{\ell} \dfrac{|A_j|}{|A|} \mathbb{E}[X(\eta^A)|Y \in A_j], &\text{if } \{v \in A|\eta^A(v) = 3\} \neq \emptyset, \\
          w_A(\eta), &\text{otherwise}. 
      \end{cases}
  \end{equation}
\end{lemma}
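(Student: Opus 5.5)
The plan is to condition on the landing vertex and use the wave decomposition \eqref{sum_of_waves}. Since $Y$ is uniform on $V$, conditioning on $Y\in A$ makes $Y$ uniform on $A$, so
\begin{equation*}
\mathbb{E}[X(\eta)\mid Y\in A]=\frac{1}{|A|}\sum_{v_i\in A} x(\eta,v_i).
\end{equation*}
For each $v_i\in A$, Lemma \ref{first_wave} gives that the first wave is $W_A(\eta)$, with size $w_A(\eta)$. The remaining task is to identify $x(\eta,v_i)-w_A(\eta)$ with an avalanche size in the configuration $\eta^A$.

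By commutativity of toppling and addition operators, $R^{\eta}_{i1}\alpha_i\eta=\alpha_i\eta^A$. A further wave occurs iff $\alpha_i\eta^A(v_i)>3$. By the definition of the first wave, $\eta^A(v)\le 3$ for all $v\neq v_i$. For $v_i$ itself, $v_i$ topples once in the first wave, and the first-wave operator ends with $\alpha_i\eta^A(v_i)\le 4$ (as in the proof of Lemma \ref{first_wave}); hence $\eta^A(v_i)\le 3$ as well.

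Case $v_i\in\tilde A$. Then $\alpha_i\eta^A(v_i)\le 3$, so the avalanche consists of the first wave only and $x(\eta,v_i)=w_A(\eta)$.

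Case $v_i\in A_j$. Then $\alpha_i\eta^A(v_i)=4$, and the remainder of the avalanche is exactly the relaxation of $\alpha_i\eta^A$. So $x(\eta,v_i)=w_A(\eta)+x(\eta^A,v_i)$, where the avalanche-size notation is extended to $\eta^A$.

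The main obstacle is that $\eta^A$ may have negative entries, so it need not lie in $\Omega$. I would argue that this is harmless. All entries of $\eta^A$ are at most $3$, and Dhar's results (finiteness, uniqueness, abelianness of relaxation, and well-definedness of toppling counts) only use that legal topplings are of unstable vertices; they hold verbatim for configurations bounded above by $3$. Negative heights only make vertices harder to topple. Likewise the wave decomposition and Lemma \ref{first_wave} go through unchanged, because their proofs only use commutativity and the upper bound $3$. Hence $x(\eta^A,v_i)$ and $\mathbb{E}[X(\eta^A)\mid Y\in A_j]$ are well defined.

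Moreover, $A_j$ is a connected set of vertices with height $3$ in $\eta^A$, so it lies inside a connected component of critical vertices of $\eta^A$. I would check that the recursion only requires averaging over $A_j$ itself. The quantity $\mathbb{E}[X(\eta^A)\mid Y\in A_j]$ is by definition $\frac{1}{|A_j|}\sum_{v_i\in A_j}x(\eta^A,v_i)$, which holds whether or not $A_j$ is a full generator, so no appeal to Lemma \ref{first_wave} is needed at this step.

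Summing over the two cases gives
\begin{equation*}
\frac{1}{|A|}\sum_{v_i\in A}x(\eta,v_i)=w_A(\eta)+\sum_{j=1}^{\ell}\frac{|A_j|}{|A|}\cdot\frac{1}{|A_j|}\sum_{v_i\in A_j}x(\eta^A,v_i),
\end{equation*}
which is \eqref{exp_av_size_rec_eq}. The second line of \eqref{exp_av_size_rec_eq} is the case where $A'$ is empty, so every $v_i\in A$ lies in $\tilde A$.

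Termination of the recursion is not needed for the identity itself. I would nonetheless remark that it terminates, since each step strictly decreases the remaining number of waves, which is finite by Dhar.
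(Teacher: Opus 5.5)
Your proposal is correct and follows essentially the same route as the paper. Both condition on the uniformly distributed landing vertex, use Lemma~\ref{first_wave} and the commutation $R^{\eta}_{i1}\alpha_i\eta=\alpha_i\eta^A$ to split $A$ into the single-wave set $\tilde A$ and the components $A_j$, and identify the rest of each avalanche with the avalanche on $\eta^A$; the only difference is that you count total topplings via Dhar's abelian property, where the paper matches waves via $w_{in}(\eta)=w_{i(n-1)}(\eta^A)$.

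Your extra well-definedness discussion is sound but mostly unnecessary. Since $\alpha_i\eta^A$ is reached from $\alpha_i\eta$ by legal topplings, it is nonnegative for every $v_i\in A$, so $\eta^A\ge 0$ as soon as $|A|\ge 2$, and also when $A=\{v_i\}$ with $\eta^A(v_i)=3$. Hence $\eta^A\in\Omega$ whenever the recursive term appears. Moreover, each $A_j$ is then a genuine generator of $\eta^A$, because its neighbours outside $A$ lie in $W_A(\eta)\setminus A$ and have height at most $2$ in $\eta^A$.
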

\begin{proof}
    Let $\tilde{H}$ denote the subset of vertices in $A$ that, when hit by a new sand grain, generate an avalanche consisting of a single wave. Let $H' = A \setminus \tilde{H}$. If $H' \neq \emptyset$, let $H_1, \ldots, H_{\ell}$ denote its connected components. By conditioning on $Y$, we obtain
    \begin{align*}
        \mathbb{E}[X(\eta)|Y \in A] &= \dfrac{|\tilde{H}|}{|A|} \mathbb{E}[X(\eta)|Y \in \tilde{H}]  + \sum\limits_{j=1}^{\ell} \dfrac{|H_j|}{|A|} \mathbb{E}[X(\eta)|Y \in H_j] \\
        &= \dfrac{|\tilde{H}|}{|A|} w_A(\eta) + \sum\limits_{j=1}^{\ell} \dfrac{|H_j|}{|A|}\mathbb{E}[X(\eta)|Y \in H_j],
    \end{align*}
    if $H' \neq \emptyset$ and
    \begin{equation*}
        \mathbb{E}[X(\eta)|Y \in A] = w_A(\eta),
    \end{equation*}
    otherwise. Hence, to establish expression (\ref{exp_av_size_rec_eq}), it suffices to show that $\tilde{H} = \tilde{A}$, $H_j = A_j$ for all $j = 1, \ldots, \ell$ and
    \begin{equation}\label{exp_av_size_rec_help}
        \mathbb{E}[X(\eta)|Y \in A_j] = w_A(\eta) + \mathbb{E}[X(\eta^A)|Y \in A_j].
    \end{equation}
    We start by showing that $\tilde{H} = \tilde{A}$. First, consider a vertex $v_i \in \tilde{H}$. Let $(x_1, \ldots, x_k)$ be a permutation of $W_A(\eta)$. Since an avalanche caused by dropping a sand grain at $v_i$ consists of only one wave, we have $\tau_{x_k} \cdots \tau_{x_1} \alpha_i \eta(v_i) \leq 3$. Since $\tau_{x_k} \cdots \tau_{x_1} \alpha_i \eta = \alpha_i \tau_{x_k} \cdots \tau_{x_1} \eta$, this implies that $\tau_{x_k} \cdots \tau_{x_1} \eta(v_i) = \eta^A(v_i) < 3$. Hence, $v_i \in \tilde{A}$. Now, suppose that $v_i \in \tilde{A}$. This implies that $\eta^A(v_i) < 3$. Therefore, we have $\tau_{x_k} \cdots \tau_{x_1} \alpha_i \eta(v_i) \leq 3$ for each permutation $(x_1, \ldots, x_k)$ of $W_A(\eta)$. It follows that an avalanche generated by dropping a sand grain at vertex $v_i$ consists of only a single wave, and thus $v_i \in \tilde{H}$. Thus, we have $\tilde{H} = \tilde{A}$.

    We proceed to prove that $H_j = A_j$ for all $j = 1, \ldots, \ell$. Let $A' = \bigcup_{j = 1}^{\ell} A_j$. Note that it suffices to show that $H' = A'$. First, consider $v_i \in H'$. Let $(x_1, \ldots, x_k)$ be a permutation of $W_A(\eta)$. Since the avalanche that is generated after a new sand grain drops at $v_i$ can be decomposed into more than one wave, we have $\tau_{x_k} \cdots \tau_{x_1} \alpha_i \eta(v_i) > 3$. Since each vertex in $W_A$ topples only once, this implies $\tau_{x_k} \cdots \tau_{x_1} \alpha_i \eta (v_i) = 4$. Hence, $\tau_{x_k} \cdots \tau_{x_1} \eta(v_i) = \eta^A(v_i) = 3$. It follows that $v_i \in A'$. Now, suppose that $v_i \in A'$. Hence, $\eta^A(v_i) = 3$. This implies that $\tau_{x_k} \cdots \tau_{x_1} \alpha_i \eta (v_i) = 4$ for each permutation $(x_1, \ldots, x_k)$ of $W_A(\eta)$. Therefore, if a new sand grain lands at $v_i$, the avalanche that is generated consists of multiple waves. Hence, $v_i \in H'$. Thus, we obtain $H' = A'$. 

    Finally, we show the validity of expression (\ref{exp_av_size_rec_help}). Through further conditioning on $Y$, we obtain
    \begin{equation*}
        \mathbb{E}[X(\eta)|Y \in A_j] =  \dfrac{1}{|A_j|}\sum\limits_{v_i \in A_j} x(\eta, v_i) . 
    \end{equation*}
    Suppose that the avalanche caused by dropping a sand grain at vertex $v_i$ consists of $k_i$ waves. Invoking expression (\ref{sum_of_waves}), we obtain
    \begin{equation*}
        \mathbb{E}[X(\eta)|Y \in A_j] = \dfrac{1}{|A_j|} \sum\limits_{v_i \in A_j} \left[w_A(\eta) + \sum\limits_{n=2}^{k_i} w_{in}(\eta) \right].
    \end{equation*}
    Let $(x_1, \ldots, x_k)$ denote a permutation of $W_A(\eta)$. Since $\tau_{x_k} \cdots \tau_{x_1} \alpha_i \eta = \alpha_i \tau_{x_k} \cdots \tau_{x_1} \eta = \alpha_i \eta^A$, it follows that $w_{in}(\eta) = w_{i(n-1)}(\eta^A)$ for all $n = 2, \ldots, k_i$. Hence, we obtain
    \begin{align*}
        \mathbb{E}[X(\eta)|Y \in A_j] &= \dfrac{1}{|A_j|} \sum\limits_{v_i \in A_j} \left[w_A(\eta) + \sum\limits_{n=1}^{k_i-1}w_{in}(\eta^A)\right] = w_A(\eta) + \dfrac{1}{|A_j|} \sum\limits_{v_i \in A_j} \sum\limits_{n=1}^{k_i-1} w_{in}(\eta^A) \\
        &= w_A(\eta) + \dfrac{1}{|A_j|} \sum\limits_{v_i \in A_j} x(\eta^A, v_i) = w_A(\eta) + \mathbb{E}[X(\eta^A)|Y \in A_j].
    \end{align*}
    This concludes the proof.
\end{proof}

Based on expression (\ref{exp_av_size_rec_eq}), we recursively define the \textit{depth} $\rho(\eta, A)$ of a generator $A$ in $\eta$ as
\begin{equation}\label{depth_def}
    \rho(\eta, A) = \begin{cases}
        1 + \max\limits_{j = 1, \ldots, \ell}\{\rho(\eta^A, A_j)\}, &\text{if } \{v \in A| \eta^A(v) = 3\} \neq \emptyset,\\
        1, &\text{otherwise.}
    \end{cases}
\end{equation}
Here, recall that the sets $A_j$, $j = 1, \ldots, \ell$, are the connected components of $\{v \in A| \eta^A(v) = 3\} \neq \emptyset$. The depth of a generator $A$ corresponds to the maximum number of topplings at a single vertex $v \in A$ during an avalanche originating from this generator. 

Algorithm \ref{Avalanche_analysis} now provides a method to compute the expected avalanche size based on the recursive expression presented in Lemma \ref{exp_av_size_rec_thm}. Theorem \ref{rec_to_alg} offers theoretical justification for the algorithm.

\begin{algorithm}
 \SetKwInput{Input}{Input}
    \SetKwInOut{Output}{Output}
  \SetAlgoLined
  \Input{A sandpile configuration $\eta \in \Omega$ on the $L\times L$ wired lattice $V$ and a generator $A \subseteq V$.}
  
  $\mathbf{1}.$ Initialize a directed graph $(V, E)$ with $E = \emptyset$. Let $\text{indeg}(v)$ and $\text{outdeg}(v)$, $v \in V$, denote the indegree and the outdegree of vertex $v$ in this graph. \\
  $\mathbf{2}.$ For each $v \in A$, add an edge to $E$ from $v$ to each of the neighbours (w.r.t. the lattice) of $v$;\\
  $\mathbf{3}.$ Set $w' = |A|$; \\
  $\mathbf{4}.$ \While{there is a $v \notin A$ that satisfies $\eta(v) + \text{indeg}(v) - \text{outdeg}(v) \geq 4$}{Select an arbitrary vertex $v \notin A$ that satisfies $\eta(v) + \text{indeg}(v) - \text{outdeg}(v) \geq 4$, add edges to $E$ from $v$ to each of the neighbours (w.r.t. the lattice) of $v$ and increment $w'$ by 1;}
  $\mathbf{5}.$ For each $v \in V$, set $\eta'(v) = \eta(v) + \text{indeg}(v) - \text{outdeg}(v)$;\\
  $\mathbf{6.a}.$ \If{the set $\{v \in A| \eta'(v) = 3\}$ is nonempty,}{Let $a_1, a_2, \ldots, a_{\ell}$ denote its connected components (w.r.t. the lattice). \\
    Let $z_i$ be the output of Algorithm \ref{Avalanche_analysis} for the sandpile configuration $\eta'$ and the generator $a_i$, $i = 1, \ldots, \ell$.\\
  Set $w = w' + \sum\limits_{i = 1}^{\ell} \dfrac{|a_i|}{|A|} z_i$.}
  $\mathbf{6.b}$ \Else{Set $w = w'$.} 
  $\mathbf{7.}$ \Output{Wave size $w$.} 
  \caption{Analysis of avalanche development through decomposition into waves.}
  \label{Avalanche_analysis}
\end{algorithm}

\begin{theorem}\label{rec_to_alg}
    Given a stable sandpile configuration $\eta \in \Omega$ and a generator $A = \{v_{i_1}, v_{i_2}, \ldots, v_{i_k}\} \subseteq V$, Algorithm \ref{Avalanche_analysis} terminates and yields $\mathbb{E}[X(\eta)|Y \in A]$. 
\end{theorem}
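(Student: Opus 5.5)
The plan is to verify three things in order. First, a single pass of Steps 2--5 computes exactly the first wave, that is, $w' = w_A(\eta)$ and $\eta' = \eta^A$. Second, every call to Algorithm \ref{Avalanche_analysis} terminates, because the recursion depth is finite. Third, the value returned agrees with the recursion (\ref{exp_av_size_rec_eq}) of Lemma \ref{exp_av_size_rec_thm}. The last two points are handled together by strong induction on the depth $\rho(\eta, A)$ defined in (\ref{depth_def}).

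\textbf{Step 1: one pass simulates the first wave.} Every vertex of $A$ topples in the first wave, as shown before Lemma \ref{first_wave}, and by Lemma \ref{first_wave} this wave does not depend on where in $A$ the grain lands. So fix some $v_i \in A$ and simulate the wave on $\alpha_i\eta$.

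\begin{itemize}
\item At any moment, the quantity $\eta(v) + \text{indeg}(v) - \text{outdeg}(v)$ equals the current height of $v$ after the topplings recorded by the edges, with the added grain ignored. Since topplings commute, these topplings can be performed in any order.
\item In Step 2, topple the vertices of $A$ in an order that is legal on $\alpha_i\eta$. Such an order exists because $A$ is a connected set of critical vertices: start at $v_i$ and proceed along a spanning tree of $A$.
\item Step 4 then topples, one at a time, vertices $v \notin A$ with current height at least $4$. These are legal topplings within the wave.
\item The added grain at $v_i$ affects only $v_i \in A$, and vertices of $A$ are never re-toppled inside a wave, because no vertex topples twice in a wave \citep{Ivashkevich}. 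So the condition in Step 4 is exactly the legality condition of the wave procedure restricted to $V\setminus A$.
\end{itemize}

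For termination of the loop, note that each selected $v$ gains outdegree equal to its degree. Its height therefore drops by $4$ and cannot return to $4$ or more unless it receives four new grains. The cleanest argument, though, is that the sequence of topplings performed so far is a legal sequence toward the wave. By \citet{Ivashkevich} every vertex topples at most once in such a sequence, so the loop runs at most $L^2 - |A|$ times.

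When the loop stops, all vertices other than $v_i$ are stable. The set of toppled vertices is therefore a complete first-wave toppling set. By uniqueness of the wave (the argument of Lemma \ref{first_wave}, or the abelian property applied to the restricted relaxation), this set equals $W_A(\eta)$. Hence $w' = w_A(\eta)$, and $\eta'$ is $\eta$ acted on by the topplings of all vertices in $W_A(\eta)$, which is $\eta^A$.

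\textbf{Step 2: recursion, termination and correctness.} Step 6 now forms exactly the components $A_1,\dots,A_\ell$ of $\{v\in A \mid \eta^A(v)=3\}$ and combines the recursive outputs by the formula of Lemma \ref{exp_av_size_rec_thm}.

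To invoke the induction, the recursive calls must be legitimate inputs. Two things need checking.

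\begin{itemize}
\item \emph{Admissible configuration.} $\eta^A$ need not lie in $\Omega$, since heights can be negative. The induction therefore runs over configurations with $\eta(v)\le 3$ for all $v$, where $A_j$ is a connected set of critical vertices. Lemmas \ref{first_wave} and \ref{exp_av_size_rec_thm} only use these properties, together with the wave decomposition of \citet{Ivashkevich} and \citet{Dhar}, which holds for such configurations. All heights in $\eta^A$ are at most $3$ by Step 1.
\item \emph{Connected component.} Each $A_j$ may not be a full connected component of critical vertices in $\eta^A$; it could be adjacent to critical vertices outside $A$. I would argue this cannot happen. Any $u\notin A$ adjacent to $A_j$ received a grain from the toppling of $A_j$'s vertices. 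If $u$ ended at height $3$ without toppling, it had height $2$ before, and nothing forbids that. So the argument would have to be that the statements are only needed for connected critical sets and that the proofs of the lemmas never use maximality of $A$. Ideally $A$ is reinterpreted as a connected set of critical vertices whose first wave contains the whole critical component containing it; the lemmas are then checked to go through in that setting.
\end{itemize}

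With these inputs, the depth $\rho(\eta^A, A_j) < \rho(\eta, A)$ strictly. The depth is finite because it equals the maximal number of topplings of a vertex, which is finite by \citet{Dhar}. Induction on depth then gives both termination of the recursion and that $z_j = \mathbb{E}[X(\eta^A)\mid Y\in A_j]$. Hence $w = \mathbb{E}[X(\eta)\mid Y\in A]$.

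\textbf{Main obstacle.} I expect the main difficulty to be exactly this mismatch between the paper's notion of a generator (a connected component in a stable sandpile) and the objects passed to the recursive calls. I would first try to show that the earlier proofs hold verbatim for nonstable inputs with heights at most $3$. If maximality really fails, I would extend the sum in the lemma to the full critical component containing $A_j$ and check that the first wave is unchanged.
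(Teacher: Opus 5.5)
Your argument follows the paper's proof. Steps 2--5 carry out exactly the topplings of the first wave, so $w'=w_A(\eta)$ and $\eta'=\eta^A$. Step 6 is then the recursion (\ref{exp_av_size_rec_eq}) of Lemma \ref{exp_av_size_rec_thm}, and the recursion terminates because its depth is finite. The differences are minor:
\begin{itemize}
\item For termination of the loop in Step 4 you appeal to the at-most-once property of waves from \citet{Ivashkevich}. The paper argues directly: the first vertex selected a second time would need indegree larger than $4$, so some neighbour was already selected twice.
\item You make the induction on $\rho(\eta,A)$ explicit, where the paper only asserts that the depth is finite.
\end{itemize}

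The point you leave open is whether the recursive calls receive admissible inputs. The paper's proof also passes over this silently. It needs none of your fallback plans, and the scenario you sketch cannot happen.

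\emph{Each $A_j$ is a generator.} If $v\in A$ has $\eta^A(v)=3$, then $3-4+\text{indeg}(v)=3$. So $v$ has four neighbours in $V$ and all of them lie in $W_A(\eta)$. In particular, any neighbour $u\notin A$ of $A_j$ did topple; it did not end at height $3$ without toppling. Since $A$ is a full critical component and $u$ is adjacent to it, $\eta(u)\le 2$. Because $u$ toppled once and received at most four grains, $\eta^A(u)\le 2-4+4=2$. A neighbour of $A_j$ inside $A$ with $\eta^A=3$ lies in $A'$, and hence in $A_j$. So $A_j$ is a full connected component of critical vertices of $\eta^A$.

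\emph{The configuration $\eta^A$ is stable whenever a recursive call occurs.} You already have $\eta^A\le 3$.
\begin{itemize}
\item If $|A|\ge 2$, then for each vertex $v$ you can drop the grain at a vertex of $A$ other than $v$. Then $\eta^A(v)$ is a height reached by legal topplings, so it is nonnegative.
\item If $A=\{v_i\}$, the neighbours of $v_i$ have height at most $2$ and each receives a single grain. So $W_A(\eta)=A$ and $\eta^A(v_i)=-1$, which gives $A'=\emptyset$ and no recursive call.
\end{itemize}

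With these two facts, every call $(\eta^A,A_j)$ satisfies the hypotheses of the theorem, and your induction on depth goes through as written.
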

\begin{proof}
    First, we show that the quantity $w'$ in Algorithm \ref{Avalanche_analysis} equals $w_A(\eta)$, the size of the first wave. To this end, we start by showing that each vertex $v \in V$ is selected as the tail of new outgoing edges in steps 2-4 at most once. This is obviously true for each vertex $v \in A$. Suppose that some vertex $v \notin A$ is selected twice in step 4. Without loss of generality, let $v$ be the first vertex that is selected for the second time. At this point, we have $\text{outdeg}(v) = 4$. Since $\eta(v) < 4$, this implies that $\text{indeg}(v) > 4$. Hence, some neighbour of $v$ was selected more than once as well, which is a contradiction.  This immediately yields that step 4 terminates, since the number of vertices is finite. 
    
    Now, we prove that a vertex $v \in V$ has outgoing edges if and only if it is part of the first wave $W_A(\eta)$. This naturally holds for all vertices $v \in A$. Now, let $v_{j_1}, v_{j_2}, \ldots, v_{j_m}$ denote an arbitrary total sequence of vertices selected in step~4. Also, let $(x_1, \ldots, x_k)$ be a permutation of $\{i_1, \ldots, i_k\}$. Observe that at the start of step 4, the quantity $\eta(v) + \text{indeg}(v) - \text{outdeg}(v)$ for $v \notin A$ is equivalent to $\tau_{x_k} \cdots \tau_{x_1} \eta(v)$. Hence, the fact that $v_{j_1}$ is selected first in step 4 implies that $\tau_{j_1}$ is a legal toppling in the sandpile $\tau_{x_k} \cdots \tau_{x_1} \eta$. Similarly, at the point that vertex $v_{j_i}$, $i = 2, \ldots, m$ is selected in step~4, the quantity $\eta(v_{j_i}) + \text{indeg}(v_{j_i}) - \text{outdeg}(v_{j_i})$ is equivalent to the number of sand grains that vertex $v_{j_i}$ holds in the sandpile configuration $\tau_{j_{i-1}} \cdots \tau_{j_1} \tau_{x_k} \cdots \tau_{x_1} \eta$. Hence, $\tau_{j_i}$ is a legal toppling in the configuration $\tau_{j_{i-1}} \cdots \tau_{j_1} \tau_{x_k} \cdots \tau_{x_1} \eta$. At the end of step 4, the quantity $\eta(v) + \text{indeg}(v) - \text{outdeg}(v)$ is equal to $\tau_{j_m} \cdots \tau_{j_1} \tau_{x_k} \cdots \tau_{x_1} \eta(v)$ for all $v \notin A$. Since at this point $\eta(v) + \text{indeg}(v) - \text{outdeg}(v) < 4$ for all $v \notin A$, there are no legal topplings in the sandpile configuration $\tau_{j_m} \cdots \tau_{j_1} \tau_{x_k} \cdots \tau_{x_1} \eta$. Hence, we have $W_A(\eta) = A \cup \{v_{j_1}, v_{j_2}, \ldots, v_{j_m}\}$. Since this argument can be applied to each sequence of vertices selected in step 4, it follows immediately that any such sequence is a permutation of the set $\{v_{j_1}, v_{j_2}, \ldots, v_{j_m}\}$. Hence, the quantity $w'$ in Algorithm \ref{Avalanche_analysis} equals $w_A(\eta)$.

    We proceed to show that $\eta' = \eta^A$. Let $\{v_{j_1}, \ldots, v_{j_m}\} = W_A(\eta) \setminus A$ and let $(x_1, \ldots, x_k)$ and $(y_1, \ldots, y_m)$ be permutations of $\{i_1, \ldots, i_k\}$ and $\{j_1, \ldots, j_m\}$. It follows from the arguments above that at the start of step 5, the indegree of a vertex $v \in V$ equals the number of neighbours of $v$ that topple in the first wave and the outdegree of a vertex $v \in V$ equals 0 if $v \notin W_A(\eta)$ and 4 otherwise. This implies that $\eta'(v) = \tau_{y_m} \cdots \tau_{y_1} \tau_{x_k} \cdots \tau_{x_1}\eta(v) = \eta^A(v)$. It now follows that $a_i = A_i$ for all $i = 1, \ldots, \ell$. 

    By the facts that $w' = w_A(\eta)$ and $a_i = A_i$ for all $i = 1, \ldots, \ell$, it follows that step 6 simply computes the recursion in expression (\ref{exp_av_size_rec_eq}).
    
    The facts that each sandpile can be stabilized by means of a finite number of topplings \citep{Dhar} and $w_A(\eta)$ is always positive ensures that the recursion in expression (\ref{exp_av_size_rec_eq}) has finite depth. This implies that Algorithm \ref{Avalanche_analysis} terminates in a finite number of steps.  
\end{proof}

\color{black}

\section{Reducing avalanche sizes through strategic interventions}
\label{Reducing avalanche sizes through strategic interventions}

This section explores the impact of removing sand grains from critical vertices in a sandpile $\eta \in \Omega$. We define the \textit{stability level} $\lambda(\eta, A)$ of a set $A \subseteq V$ in $\eta$ as the quantity
\begin{equation}\label{stability_level}
    \lambda(\eta, A) = \min_{v_i \in A}\dfrac{\mathbb{E}[X(\gamma_i \eta)|Y \in A]}{\mathbb{E}[X(\eta)|Y \in A]},
\end{equation} 
where $Y$ denotes the vertex at which the next sand grain lands.
The stability level is a measure of the impact of the removal of sand grains from a vertex in $A$ on the expected avalanche size. We call the set of vertices in $A$ for which the minimum in expression (\ref{stability_level}) is attained the set of \textit{cornerstone vertices} of $A$, denoted by $B^*_A(\eta)$.

We consider square-shaped generators of size $N \times N$. For such generators, avalanches exhibit a tractable structure, which enables an analytical computation of the expected avalanche size. Moreover, avalanches originating from the square do not propagate beyond its boundary, which allows for a local analysis of avalanche dynamics and control strategies. We explicitly compute the expected avalanche size after interventions at different locations within the square and characterize the optimal targets for sand grain removal. 

Given a set $A \subset V$ that has the shape of an $N \times N$ square, where $N \geq 3$, let $\delta^{\text{in}}(A)$, $C(A)$, $\text{int}(A)$ and $\delta^{\text{out}}(A)$ denote the inner boundary, the corners, the interior and the outer boundary of $A$, i.e., 
    \begin{align*}
        \delta^{\text{in}}(A) &= \{v \in A|\quad \exists \text{ exactly one } w \notin A \text{ such that } v \sim w\}, \\
        C(A) &= \{v \in A|\quad \exists \text{ exactly two } w \notin A \text{ such that } v \sim w\}, \\
        \text{int}(A) &= \{v \in A| \quad w \in A \text{ for all } w \in V \text{ such that } v \sim w\}, \\
        \delta^{\text{out}}(A) &= \{v \notin A|\quad \exists w \in A \text{ such that } v \sim w\}.
    \end{align*} 
Additionally, we define the rings $R_1(A), R_2(A), \ldots, R_{\lceil N/2 \rceil}(A)$ of $A$ as
\begin{align*}
    &R_{\lceil N/2 \rceil}(A) = \delta^{\text{in}}(A) \cup C(A), \\
    &R_j(A) = \delta^{\text{in}}(A \setminus (\cup_{k = j+1}^{\lceil N/2 \rceil}R_k(A))) \cup C(A \setminus (\cup_{k = j+1}^{\lceil N/2 \rceil}R_k(A))), \quad \text{for } j = 2, \ldots, \lceil N/2 \rceil -1,\\
    &R_1(A) = A \setminus (\cup_{k = 2}^{\lceil N/2 \rceil}R_k(A)).
\end{align*}
Let $A^{(N)}$ denote a generator that has the shape of an $N \times N$ square. Theorem \ref{thm_exp_av_size_square} provides the expected avalanche size corresponding to this type of generator. Theorem \ref{thm_depth_square} gives the depth of such a generator. 

\begin{theorem}\label{thm_exp_av_size_square}
Consider a generator $A^{(N)}$ in a sandpile $\eta \in \Omega$ that has the form of an $N\times N$ square. The expected avalanche size corresponding to this generator is given by
\begin{equation}\label{exp_av_size_square}
\mathbb{E}[X(\eta)|Y \in A^{(N)}] = (3N^4+15N^3+20N^2-8)/(30N). 
\end{equation}
\end{theorem}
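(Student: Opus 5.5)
The plan is to apply the recursion of Lemma~\ref{exp_av_size_rec_thm} directly and show that it closes on squares. The key structural claim is that an $N\times N$ square generator produces a first wave equal to the square itself, after which the critical vertices form exactly the concentric $(N-2)\times(N-2)$ square. That square is again a single generator, surrounded by noncritical vertices.

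\textbf{Step 1 (first wave).} I will show $W_{A^{(N)}}(\eta)=A^{(N)}$, so $w_{A^{(N)}}(\eta)=N^2$. Since $A^{(N)}$ is a connected component of critical vertices, every $v\in\delta^{\text{out}}(A^{(N)})$ satisfies $\eta(v)\le 2$. In the lattice, each such $v$ is adjacent to exactly one vertex of the square, because a vertex outside a square shares an edge with at most one square vertex. Toppling every vertex of $A^{(N)}$ once therefore raises $v$ to at most $3$. Hence no vertex outside $A^{(N)}$ becomes unstable, which is exactly the termination condition in step 4 of Algorithm~\ref{Avalanche_analysis}. By Theorem~\ref{rec_to_alg}, the resulting set is the first wave.

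\textbf{Step 2 (the configuration $\eta^{A}$).} Each $v\in A^{(N)}$ loses $4$ grains and gains one grain per neighbour inside $A^{(N)}$. This gives
\[
\eta^{A}(v)=3 \text{ on } \text{int}(A^{(N)}),\quad \eta^{A}(v)=2 \text{ on } \delta^{\text{in}}(A^{(N)}),\quad \eta^{A}(v)=1 \text{ on } C(A^{(N)}),
\]
while values outside $A^{(N)}$ stay in $\{0,\dots,3\}$. Consequently $\eta^{A}\in\Omega$, and $\{v\in A^{(N)}:\eta^{A}(v)=3\}=\text{int}(A^{(N)})$. For $N\ge 3$ this is a single connected $(N-2)\times(N-2)$ square $A^{(N-2)}$. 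It is a generator in $\eta^{A}$, since its outer boundary is the ring $R_{\lceil N/2\rceil}$, where the values are $1$ or $2$. For $N\le 2$ the interior is empty.

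\textbf{Step 3 (recursion and closed form).} Write $E_N:=\mathbb{E}[X(\eta)\mid Y\in A^{(N)}]$. Steps 1--2 and Lemma~\ref{exp_av_size_rec_thm} give
\[
E_N=N^2+\frac{(N-2)^2}{N^2}E_{N-2},
\]
with base cases $E_1=1$ and $E_2=4$; the latter holds because in a $2\times 2$ square every vertex ends the first wave with value $1$. Setting $F_N=N^2E_N$ turns this into $F_N=N^4+F_{N-2}$. Hence $F_N=\sum_{0\le k<N/2}(N-2k)^4$.

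Let $G(N)=N(3N^4+15N^3+20N^2-8)/30$ be the claimed value of $F_N$. It remains to verify $G(1)=1$, $G(2)=16$, and the polynomial identity $G(N)-G(N-2)=N^4$. Dividing $G(N)$ by $N^2$ then gives \eqref{exp_av_size_square}.

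The only genuinely delicate point is Step 1/2: I must confirm that no vertex of $\delta^{\text{out}}$ can topple, which depends on each outer vertex touching exactly one square vertex. I must also check that the recursive configuration is again a square generator bordered by noncritical vertices, so that the induction is legitimate. The final identity $G(N)-G(N-2)=N^4$ is a routine expansion.
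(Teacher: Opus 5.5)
Your proposal is correct and follows essentially the same route as the paper. Both arguments induct in steps of two via Lemma~\ref{exp_av_size_rec_thm}, using that the first wave is exactly the square and the surviving critical set is the interior $(N-2)\times(N-2)$ square; your substitution $F_N=N^2E_N$ simply turns the final verification into a telescoping sum. One small slip, which is harmless because the recursion stops there: for $N=1$ the single vertex has $\eta^{A}=-1$, so $\eta^{A}\in\Omega$ holds only for $N\ge 2$.
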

\begin{proof}
    We compute the expected avalanche size corresponding to the generator $A^{(N)}$ by induction over $N$. We start by showing the validity of expression (\ref{exp_av_size_square}) for $N = 1$ and $N = 2$. First, consider $N = 1$. Let the unique vertex in $A^{(1)}$ be denoted by $v_i$. To find the expected avalanche size corresponding to $A^{(1)}$, we use Algorithm \ref{Avalanche_analysis}. After initializing the directed graph $(V, E)$ with $E = \emptyset$, we add an edge to $E$ from $v_i$ to each of its neighbours and set $w' = 1$. Now, consider $v \in V$, $v \neq v_i$. If $v$ is not a neighbour of $v_i$ (w.r.t. the lattice), we have $\text{indeg}(v) = 0$ and thus $\eta(v) + \text{indeg}(v) < 4$. If, on the other hand, $v$ is a neighbour of $v_i$ (w.r.t. the lattice), we have $\text{indeg}(v) = 1$. Also, the fact that $v \notin A^{(1)}$ implies $\eta(v) < 3$. Hence, $\eta(v) + \text{indeg}(v) < 4$. Therefore, step 4 of the algorithm terminates and we obtain $w_{A^{(1)}}(\eta) = 1$ and $A^{(1)}_1 := \{v \in A^{(1)}|\eta^{A^{(1)}}(v) = 3\} = \emptyset$. Thus, we have $\mathbb{E}[X(\eta)|Y \in A^{(1)}] = 1$, which is consistent with expression~(\ref{exp_av_size_square}). 
    
    We proceed to consider $N = 2$. Again, we initialize the directed graph $(V, E)$ with $E = \emptyset$ and we augment $E$ with an edge from $v$ to each of its neighbours for each $v \in A^{(2)}$. Note that, at this point, we have $\eta(v) + \text{indeg}(v) - \text{outdeg}(v) < 4$ for each $v \notin A^{(2)}$. Hence, step 4 of Algorithm \ref{Avalanche_analysis} terminates and we obtain $w_{A^{(2)}}(\eta) = 4$ and $A^{(2)}_1 := \{v \in A^{(2)}|\eta^{A^{(2)}}(v) = 3\} = \emptyset$. The directed graph and the sandpile configuration $\eta^{A^{(2)}}$ are shown in Figure \ref{Figs_full_square_small_even}. We obtain $\mathbb{E}[X(\eta)|Y \in A^{(2)}] = 4$, which is consistent with expression (\ref{exp_av_size_square}). 

     \begin{figure}[H]
    \centering
    \captionsetup[subfigure]{justification=centering, labelformat=empty, singlelinecheck=false, width=0.6\linewidth}

    \begin{subfigure}[b]{0.45\textwidth}
        \centering
        \includegraphics[width=0.7\textwidth]{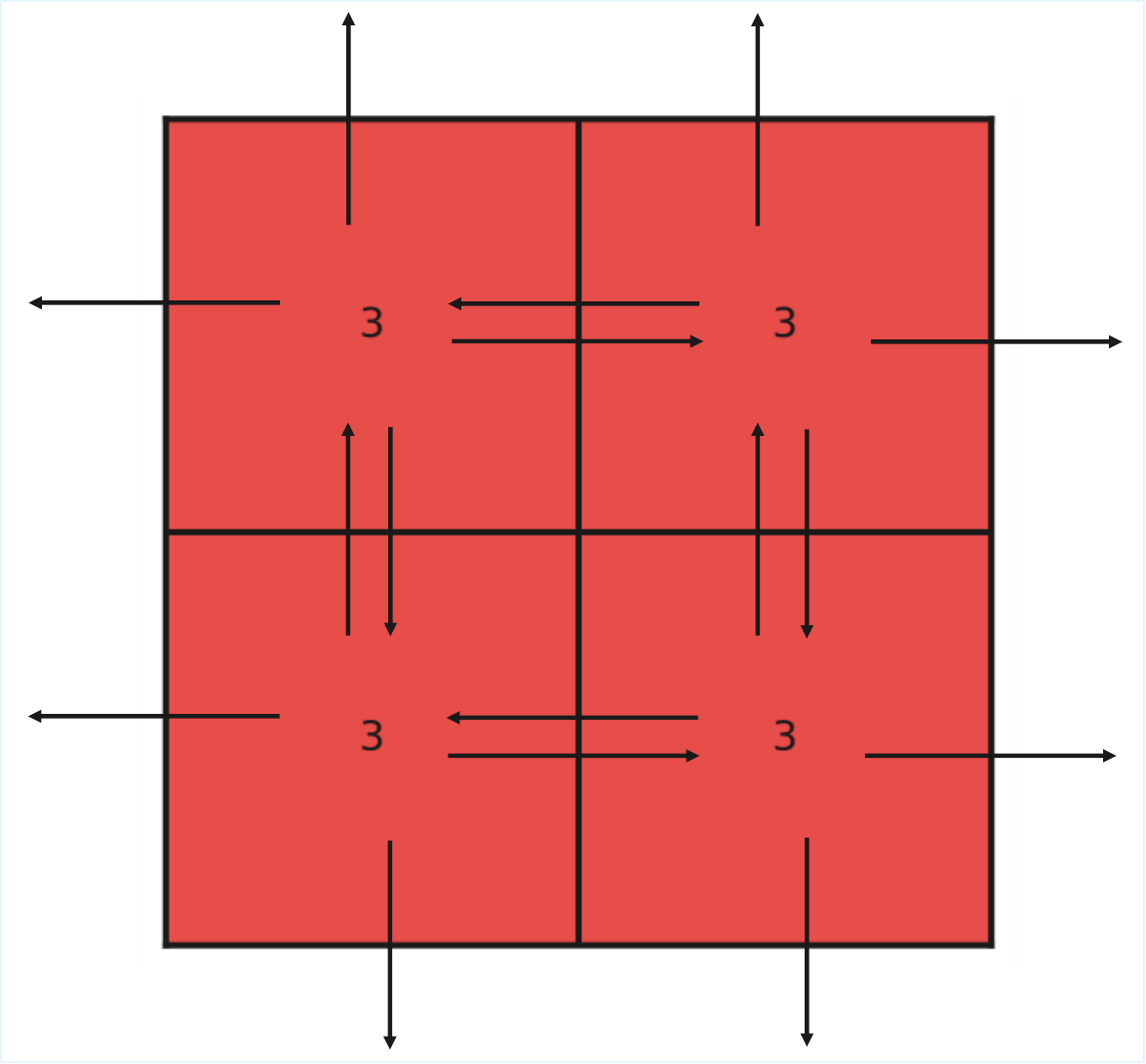}
        \caption[]%
        {{\small }}    
    \end{subfigure}
    \begin{subfigure}[b]{0.45\textwidth}  
        \centering 
        \includegraphics[width=0.7\textwidth]{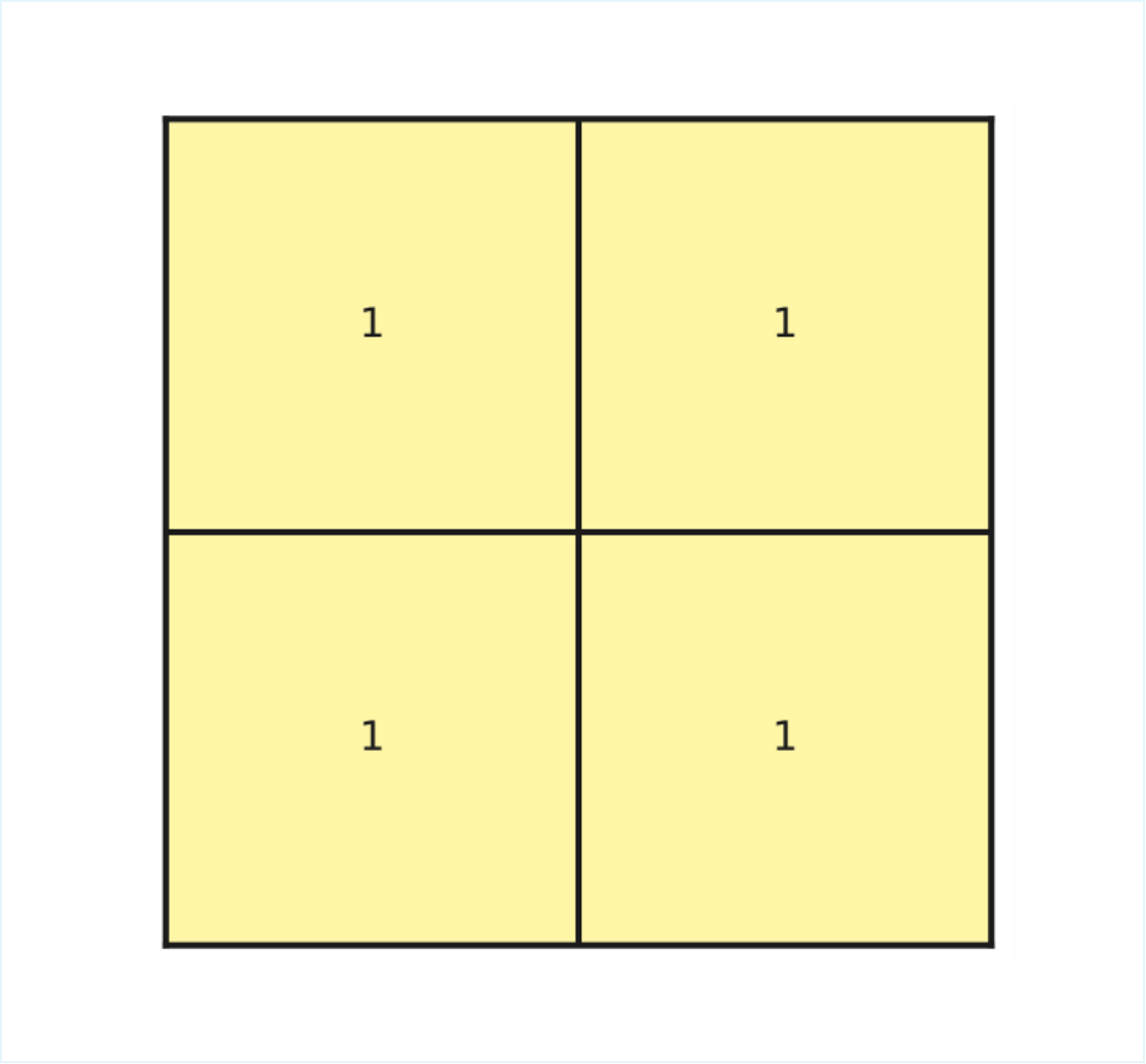}
        \caption[]%
        {{\small}}    
    \end{subfigure}
    \caption{Directed graph constructed in Algorithm 1 applied to a square generator (left) with $N = 2$ and sandpile configuration $\eta^{A^{(2)}}$ (right).} 
    \label{Figs_full_square_small_even}
\end{figure}

    Now, suppose that expression (\ref{exp_av_size_square}) holds for $N = k-2$ for some $k = 3, 4, \ldots$. We show that it continues to hold for $N = k$. We again use Algorithm \ref{Avalanche_analysis} to find the size $w_{A^{(k)}}(\eta)$ of the first wave, the sets $A^{(k)}_1, \ldots, A^{(k)}_{\ell}$ and the sandpile $\eta^{A^{(k)}}$. Again, we initialize a directed graph $(V, E)$ with $E = \emptyset$, add edges to $E$ from $v$ to each of the neighbours (w.r.t. the lattice) of $v$ for each $v \in A^{(k)}$ and set $w' = |A^{(k)}| = k^2$. Note that for each $v \notin A^{(k)} \cup \delta^{\text{out}}(A^{(k)})$, we have $\text{indeg}(v) = \text{outdeg}(v) = 0$ and that for $v \in \delta^{\text{out}}(A^{(k)})$, we have $\text{indeg}(v) = 1$ and $\text{outdeg}(v) = 0$. Furthermore, since $v \in \delta^{\text{out}}(A^{(k)})$ is not part of $A^{(k)}$, we have $\eta(v) < 3$. This implies that $\eta(v) + \text{indeg}(v) < 4$ for all $v \notin A^{(k)}$ and step~4 of the algorithm terminates at this point, resulting in $w_{A^{(k)}}(\eta) = k^2$. Now, note that $\eta^{A^{(k)}}(v) = 3$ if and only if $\text{indeg}(v) = 4$. This implies that $\eta^{A^{(k)}}(v) = 3$ if and only if $v \in \text{int}(A^{(k)})$. Hence, the set $\{v \in A^{(k)}| \eta^{A^{(k)}}(v) = 3\}$ consists of a single connected component $A^{(k)}_1$, which has the form of a square of size $k-2 \times k-2$. The directed graph constructed in Algorithm 1 and the sandpile configuration $\eta^{A^{(k)}}$ are depicted in Figure \ref{Figs_full_square}. 
    
     \begin{figure}[H]
    \centering
    \captionsetup[subfigure]{justification=centering, labelformat=empty, singlelinecheck=false, width=0.6\linewidth}

    \begin{subfigure}[b]{0.45\textwidth}
        \centering
        \includegraphics[width=0.7\textwidth]{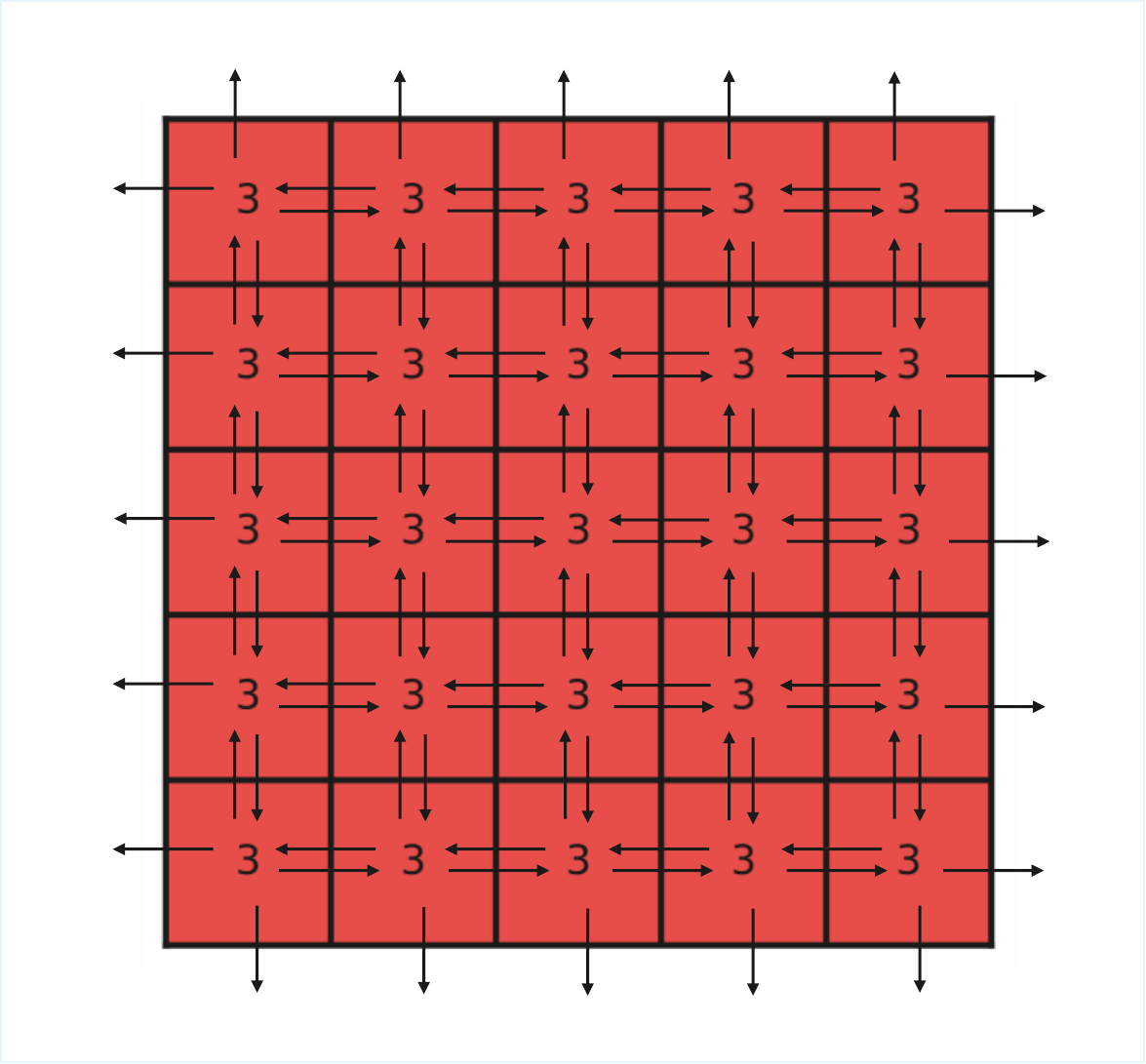}
        \caption[]%
        {{\small }}    
    \end{subfigure}
    \begin{subfigure}[b]{0.45\textwidth}  
        \centering 
        \includegraphics[width=0.7\textwidth]{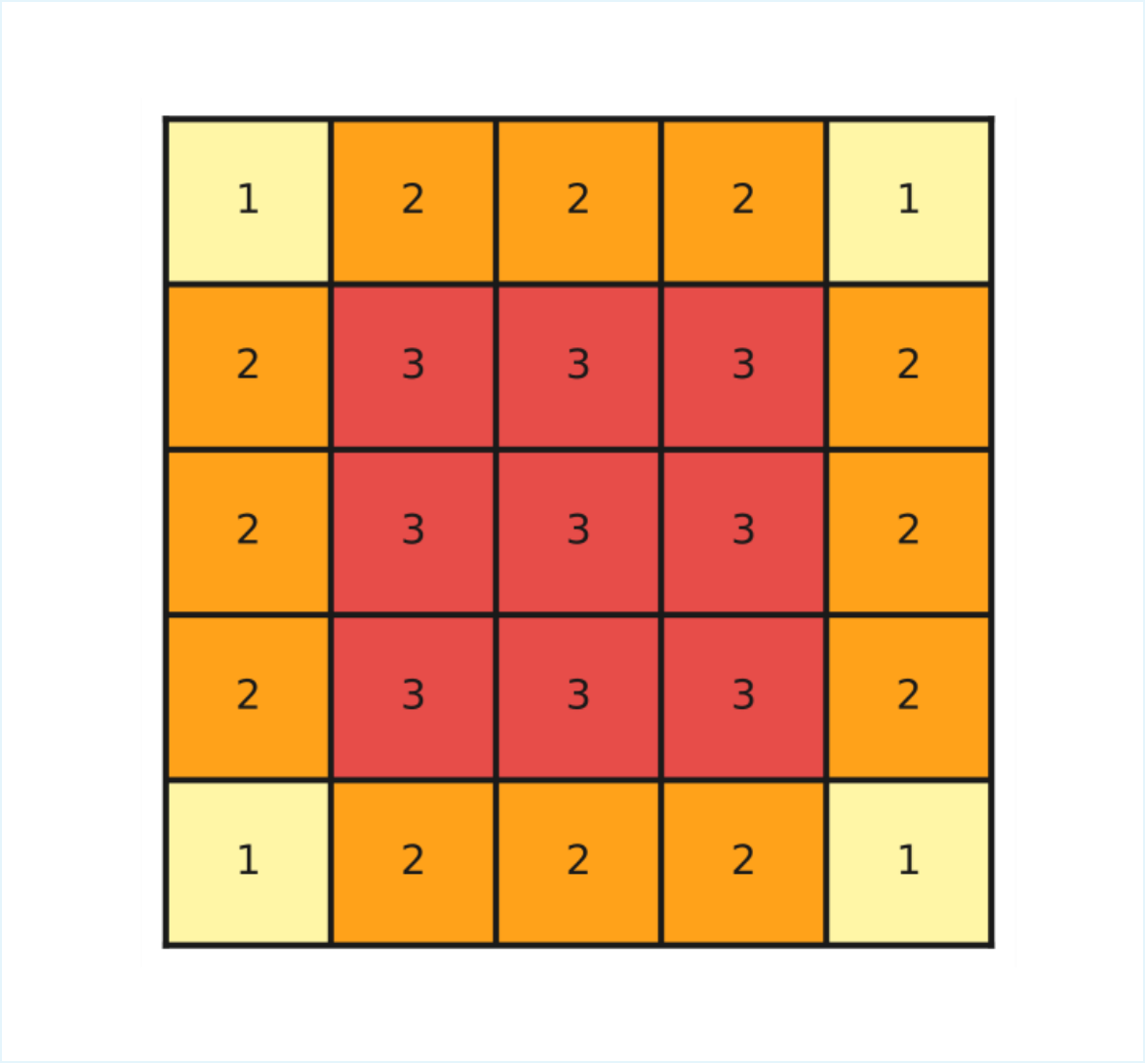}
        \caption[]%
        {{\small}}    
    \end{subfigure}
    \caption{Directed graph constructed in Algorithm 1 applied to a square generator (left) and sandpile configuration $\eta^{A^{(k)}}$ (right).} 
    \label{Figs_full_square}
\end{figure}

    Inserting the obtained size of the first wave $w_{A^{(k)}}(\eta)$ into expression (\ref{exp_av_size_rec_eq}), using the structure of $A_1^{(k)}$ and invoking the induction hypothesis now yields
    \begin{equation*}
        \mathbb{E}[X(\eta)|Y \in A^{(k)}] = k^2 + \dfrac{(k-2)^2}{k^2} \dfrac{3(k-2)^4+15(k-2)^3 + 20(k-2)^2 -8}{30(k-2)} = \dfrac{3k^4+15k^3+20k^2-8}{30k},
    \end{equation*}
    establishing expression (\ref{exp_av_size_square}) for $N = k$. It follows that expression (\ref{exp_av_size_square}) holds for all positive integers $N$. 
\end{proof}

\newpage
\begin{remark}[Local confinement of avalanches and upper bound]\label{remark}
Observe from the arguments above that an avalanche emanating from a square generator is confined to this region, since the surrounding boundary of noncritical vertices acts as an insurmountable barrier. Moreover, given two sandpile configurations $\eta, \eta' \in \mathcal{X}$ with $\eta(v) \geq \eta'(v)$ for all $v \in V$, note that $x(\eta, v) \geq x(\eta', v)$ for all $v \in V$. Consequently, expression (\ref{exp_av_size_square}) provides an upper bound for the expected avalanche size associated with any $N \times N$ square-shaped region enclosed by noncritical vertices. \qed
\end{remark}

\begin{theorem}\label{thm_depth_square}
Consider a generator $A^{(N)}$ in a sandpile $\eta \in \Omega$ that has the form of an $N\times N$ square. The depth of this generator is given by 
\begin{equation}\label{depth_square_expr}
    \rho(\eta, A^{(N)}) = \lceil N/2 \rceil.
\end{equation}
\end{theorem}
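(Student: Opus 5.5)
Our plan is to follow the induction of Theorem \ref{thm_exp_av_size_square}, in steps of two in $N$, and use the recursive definition (\ref{depth_def}) of the depth instead of the recursion (\ref{exp_av_size_rec_eq}).

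\textbf{Base cases.} For $N=1$ and $N=2$, the proof of Theorem \ref{thm_exp_av_size_square} already shows that Algorithm \ref{Avalanche_analysis} gives $\{v \in A^{(N)} \mid \eta^{A^{(N)}}(v)=3\}=\emptyset$. By (\ref{depth_def}) this yields $\rho(\eta,A^{(N)})=1=\lceil N/2\rceil$ in both cases.

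\textbf{Induction step.} Take $k\ge 3$ and assume the claim holds for every square generator of side $k-2$, in any stable sandpile. We reuse the analysis from the proof of Theorem \ref{thm_exp_av_size_square}:
\begin{itemize}
\item the first wave is exactly $A^{(k)}$;
\item $\eta^{A^{(k)}}(v)=3$ if and only if $v\in\text{int}(A^{(k)})$;
\item hence the set $\{v\in A^{(k)} \mid \eta^{A^{(k)}}(v)=3\}$ is a single connected component $A_1^{(k)}$, a $(k-2)\times(k-2)$ square, and it is nonempty because $k\ge 3$.
\end{itemize}
Definition (\ref{depth_def}) then gives $\rho(\eta,A^{(k)})=1+\rho(\eta^{A^{(k)}},A_1^{(k)})$. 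The induction hypothesis gives $\rho(\eta^{A^{(k)}},A_1^{(k)})=\lceil (k-2)/2\rceil$, so
\[
\rho(\eta,A^{(k)})=1+\lceil k/2\rceil-1=\lceil k/2\rceil .
\]

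\textbf{Main obstacle.} The induction hypothesis has to apply to $\eta^{A^{(k)}}$, which the earlier proof also used implicitly. Two points need checking.

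\begin{itemize}
\item \emph{$A_1^{(k)}$ is a genuine generator in $\eta^{A^{(k)}}$.} It is a connected component of critical vertices: its neighbours in $A^{(k)}$ lie in the ring $R_{\lceil k/2\rceil}(A^{(k)})$, where $\eta^{A^{(k)}}<3$.
\item \emph{The induction argument only uses local data.} We cannot assume $\eta^{A^{(k)}}\in\Omega$, since outer-boundary vertices may reach $3$, and the boundary ring holds values in $\{-1,0,1,2\}$, possibly negative. But the argument only needs two facts: every vertex of $\delta^{\text{out}}$ of the inner square is at most $2$, and the square's topplings cannot push any vertex outside it to $4$.
\end{itemize}

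For the inner square $A_1^{(k)}$ with $k-2\ge 1$, its outer boundary lies in the ring of $A^{(k)}$, where $\eta^{A^{(k)}}\le 2$. Each such vertex receives only one grain from the inner square, so it stays below $4$. Likewise, Algorithm \ref{Avalanche_analysis} and Lemma \ref{exp_av_size_rec_thm} never use nonnegativity. We would therefore state the induction hypothesis for any configuration in which the square consists of critical vertices and all its outer-boundary vertices hold at most $2$ grains, and note that this class is preserved by the step.
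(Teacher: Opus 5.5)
Your proof is correct and is essentially the paper's argument: induction in steps of two via definition (\ref{depth_def}), using the fact from the proof of Theorem \ref{thm_exp_av_size_square} that the critical remainder after the first wave is the $(k-2)\times(k-2)$ interior square. Your check that this interior is a genuine generator in $\eta^{A^{(k)}}$ fills in a step the paper glosses over when it writes $\rho(\eta, A^{(k-2)})$ for $\rho(\eta^{A^{(k)}}, A^{(k-2)})$. The strengthened induction hypothesis is unnecessary, though. For $k\ge 3$ the ring of $A^{(k)}$ holds only the values $1$ and $2$ (no negative values), and outer-boundary vertices reach at most $3$. Hence $\eta^{A^{(k)}}\in\Omega$, and the theorem as stated applies directly to the interior square.
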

\begin{proof}
We prove the statement by induction over $N$, using the definition of depth given in expression (\ref{depth_def}). The result is obvious for $N = 1$. Consider $N = 2$. Recall from the proof of Theorem \ref{thm_exp_av_size_square} that $\{v \in A^{(2)}|\eta^{A^{(2)}}(v) = 3\} = \emptyset$ (see Figure \ref{Figs_full_square_small_even}). Hence, it follows from expression (\ref{depth_def}) that $\rho(\eta, A^{(2)}) = 1$, which is consistent with expression (\ref{depth_square_expr}). We proceed to assume that expression (\ref{depth_def}) is valid for $N = k-2$ for some $k = 3, 4, \ldots$ and show that this implies its correctness for $N = k$. It follows from the proof of Theorem \ref{thm_exp_av_size_square} that the set $\{v \in A^{(k)}|\eta^{A(k)}(v) = 3\}$ is a square connected component of size $k-2 \times k-2$ (see Figure \ref{Figs_full_square}). Hence, by expression (\ref{depth_def}) and the induction hypothesis, we obtain
\begin{equation}
    \rho(\eta, A^{(k)}) = 1 + \rho(\eta, A^{(k-2)}) = 1 + \lceil (k-2)/2 \rceil = \lceil k/2 \rceil,
\end{equation}
which establishes expression (\ref{depth_square_expr}) for $N = k$. 
\end{proof}

We proceed to characterize the set of cornerstone vertices for square-shaped generators. To this end, we first analyze the effect of removing the sand grains from critical vertices at different locations within such a generator. The results are collected in Lemmas \ref{lem_cornerstones_center} -- \ref{lem_cornerstones_C}. We include the proof of Lemma \ref{lem_cornerstones_center}. The proofs of the remaining lemmas follow similar lines and are deferred to the Appendix. 

\begin{lemma}\label{lem_cornerstones_center}
Consider a generator $A^{(N)}$ in a sandpile $\eta \in \Omega$ that has the form of an $N \times N$ square, where $N \geq 2$, and let $v_i \in R_1(A^{(N)})$. Then,
\begin{equation}\label{Exp_av_size_center}
\mathbb{E}[X(\gamma_i \eta)|Y \in A^{(N)}] = \begin{cases}
    \dfrac{(N^2-1)(N^2+5N+6)}{10N}, &\text{if } N \text{ is odd}, \\[1.2em]
    \dfrac{N^5+5N^4+5N^3-5N^2-6N-30}{10N^2}, &\text{if } N \text{ is even.}
    \end{cases}
\end{equation}
\end{lemma}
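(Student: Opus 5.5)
The plan is to reduce the intervened square to a nested sequence of square waves, as in the proof of Theorem \ref{thm_exp_av_size_square}, while keeping track of the emptied vertex $v_i$.

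\textbf{Setup.} In $\gamma_i\eta$ we have $\gamma_i\eta(v_i)=0$. A grain landing on $v_i$ therefore causes no avalanche, so
\[
\mathbb{E}[X(\gamma_i\eta)|Y\in A^{(N)}]=\frac{N^2-1}{N^2}\,\mathbb{E}[X(\gamma_i\eta)|Y\in A^{(N)}\setminus\{v_i\}].
\]
For $N\ge 3$ the set $A^{(N)}\setminus\{v_i\}$ is connected and is a generator in $\gamma_i\eta$. For $N=2$ it is an L-shaped triple.

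\textbf{Main wave structure.} I will run Algorithm \ref{Avalanche_analysis} on $\gamma_i\eta$ with generator $A^{(N)}\setminus\{v_i\}$. By Theorem \ref{rec_to_alg} this computes the expectation through the recursion of Lemma \ref{exp_av_size_rec_thm}.

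In step 2 every vertex of $A^{(N)}\setminus\{v_i\}$ sends a grain to each lattice neighbour. Since $v_i\in R_1(A^{(N)})$ and $N\ge 3$, all four neighbours of $v_i$ lie in the generator. So $v_i$ reaches $0+4=4$ and is selected in step 4. Outer-boundary vertices reach at most $2+1<4$, as in the proof of Theorem \ref{thm_exp_av_size_square}. Hence the first wave is the full square, of size $N^2$.

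The resulting configuration coincides with $\eta^{A^{(N)}}$ except at $v_i$, which holds $0+4-4=0$. Concretely:
\begin{itemize}
\item the critical vertices form the interior $(N-2)\times(N-2)$ square minus $v_i$;
\item the ring $R_{\lceil N/2\rceil}$ becomes noncritical;
\item the neighbours of $v_i$ still hold $3$, because $v_i$ toppled.
\end{itemize}

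Iterating, the $j$-th wave is exactly the square of side $s=N-2(j-1)$. It occurs precisely when $Y$ lies in that square minus $v_i$. This is the same picture as in Theorem \ref{thm_exp_av_size_square}, with the hole carried along. I will verify this by induction on the ring index, mirroring that proof.

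\textbf{Innermost stage and summation.} The recursion stops differently by parity.
\begin{itemize}
\item \emph{$N$ odd.} The final square is $\{v_i\}$ itself. It holds $0$, so no further wave occurs. Summing $s^2$ times the probability $(s^2-1)/N^2$ of reaching that wave gives
\[
\mathbb{E}[X(\gamma_i\eta)|Y\in A^{(N)}]=\frac{1}{N^2}\sum_{s\in\{3,5,\ldots,N\}} s^2(s^2-1).
\]
\item \emph{$N$ even.} At the $2\times 2$ stage the generator is the L-shaped triple, and $v_i$ receives only $2$ grains, so it does not topple. That wave has size $3$. Afterwards the L-vertices hold $1,0,0$, so the recursion terminates. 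This gives
\[
\mathbb{E}[X(\gamma_i\eta)|Y\in A^{(N)}]=\frac{1}{N^2}\Bigl[\sum_{s\in\{4,6,\ldots,N\}} s^2(s^2-1)+9\Bigr].
\]
This also covers $N=2$ directly, giving $9/4$.
\end{itemize}

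Evaluating these sums of $s^4-s^2$ over odd or even $s$ with standard power-sum formulas should yield (\ref{Exp_av_size_center}). Sanity checks: $N=3$ gives $8$ and $N=2$ gives $9/4$, both matching the stated formula.

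\textbf{Main obstacle.} The delicate part is the inductive claim that after each wave the configuration inside the current square is "all critical except $v_i$ at $0$". This requires checking three things:
\begin{itemize}
\item boundary-ring vertices fall below $3$ even when adjacent to $v_i$, which only happens at the last stage;
\item $v_i$ topples in every wave except the even-$N$ final one;
\item nothing outside the current square topples.
\end{itemize}
I would handle this with the indegree/outdegree bookkeeping of Algorithm \ref{Avalanche_analysis}, separating the stage where $v_i$ touches the current boundary from the others.
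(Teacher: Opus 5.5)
Your proposal is correct and follows essentially the paper's proof: both run Algorithm~\ref{Avalanche_analysis} on $A^{(N)}\setminus\{v_i\}$ in $\gamma_i\eta$, observe that $v_i$ receives four grains and topples, so each wave is a full concentric square of side $s$ with the hole carried into its interior, and both stop at $s=3$ for odd $N$ or at the L-shaped triple (wave size $3$) for even $N$. The only difference is cosmetic: you unroll the recursion into $\frac{1}{N^2}\sum_s s^2(s^2-1)$ (plus $9$ for even $N$) and evaluate power sums, whereas the paper substitutes the closed form into the recurrence as an induction hypothesis in steps $N-2\to N$. These agree because $N(N^2-1)(N+2)(N+3)/10$ has increments exactly $N^2(N^2-1)$.
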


\begin{proof}
    Let $\tilde{A}_R^{N,1}$ denote the remainder of the generator $A^{(N)}$ in the sandpile configuration $\gamma_i \eta$, i.e., $\tilde{A}_R^{N,1} = A^{(N)} \setminus \{v_i\}$. Note that conditioning on the vertex $Y$ that the next sand grain lands upon yields
    \begin{equation*}
        \mathbb{E}[X(\gamma_i \eta)|Y \in A^{(N)}] = \dfrac{N^2-1}{N^2} \mathbb{E}[X(\gamma_i \eta)|Y \in \tilde{A}^{N,1}_R].
    \end{equation*}
    Hence, it suffices to prove that
    \begin{equation}\label{Exp_av_size_center_2}
\mathbb{E}[X(\gamma_i \eta)|Y \in \tilde{A}^{N,1}_R] = \begin{cases}
    \dfrac{N(N^2+5N+6)}{10}, &\text{if } N \text{ is odd}, \\[1.2em]
    \dfrac{N^5+5N^4+5N^3-5N^2-6N-30}{10(N^2-1)}, &\text{if } N \text{ is even.}
    \end{cases}
\end{equation}
    We prove the validity of expression (\ref{Exp_av_size_center_2}) by applying Algorithm \ref{Avalanche_analysis} to the generator $\tilde{A}^{N,1}_R$ in the configuration $\gamma_i \eta$ and performing induction over $N$. First, we show that expression (\ref{Exp_av_size_center_2}) holds for $N = 2$ and $N = 3$. Consider the case that $N = 2$. We initialize a directed graph $(V, E)$ with $E = \emptyset$ and augment $E$ with edges from $v$ to each of its neighbours for each $v \in \tilde{A}^{2,1}_R$. Now, observe that $(\gamma_i \eta)(v) + \text{indeg}(v) - \text{outdeg}(v) < 4$ for all $v \notin \tilde{A}_R^{2, 1}$. Thus, we obtain $w_{\tilde{A}_R^{2,1}}(\gamma_i \eta) = 3$. The directed graph and the sandpile configuration $(\gamma_i \eta)^{\tilde{A}_R^{2,1}}$ are shown in Figure \ref{Figs_center_small_even}. 

   \begin{figure}[H]
    \centering
    \captionsetup[subfigure]{justification=centering, labelformat=empty, singlelinecheck=false, width=0.6\linewidth}

    \begin{subfigure}[b]{0.45\textwidth}
        \centering
        \includegraphics[width=0.7\textwidth]{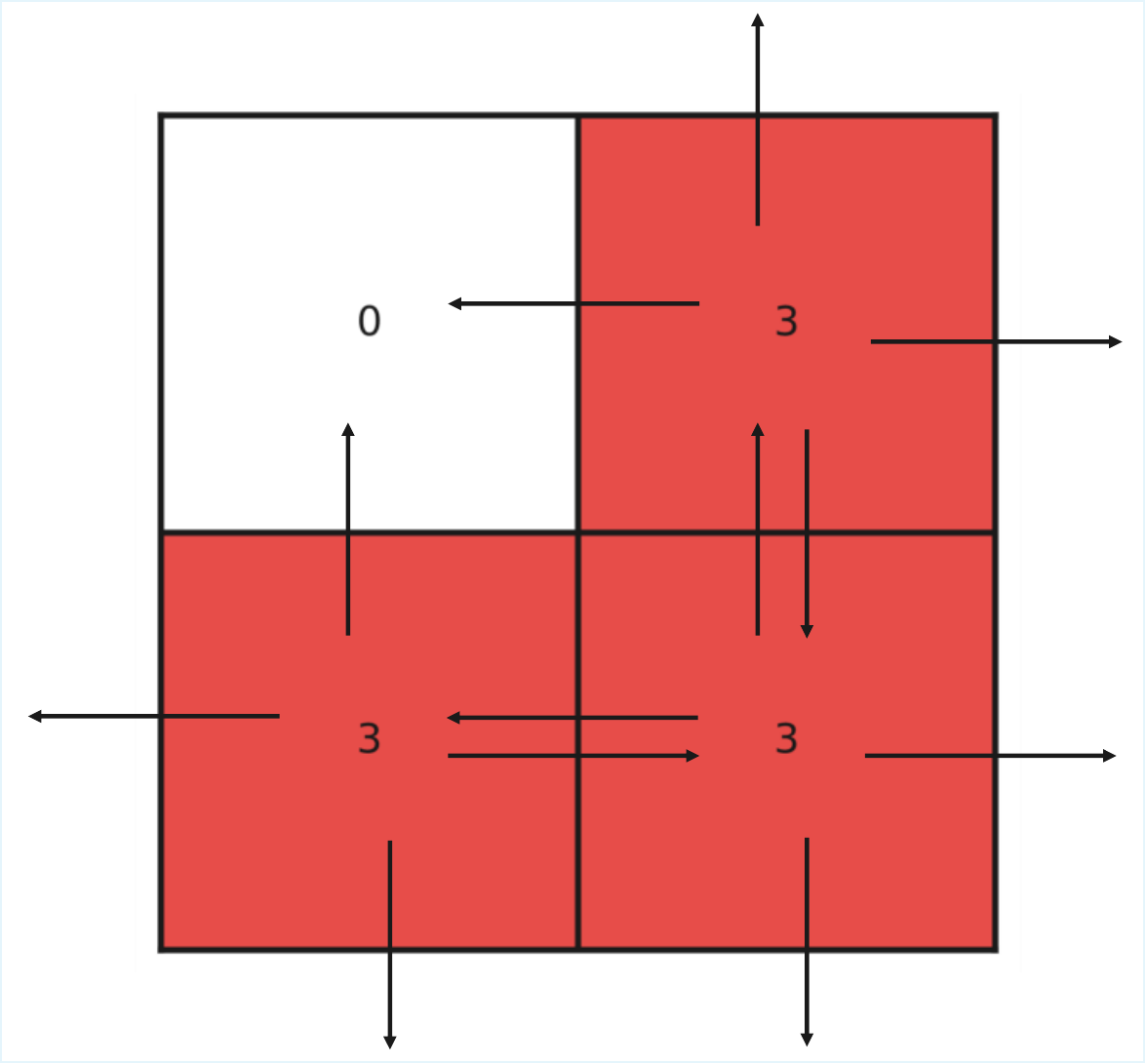}
        \caption[]%
        {{\small }}    
    \end{subfigure}
    \begin{subfigure}[b]{0.45\textwidth}  
        \centering 
        \includegraphics[width=0.7\textwidth]{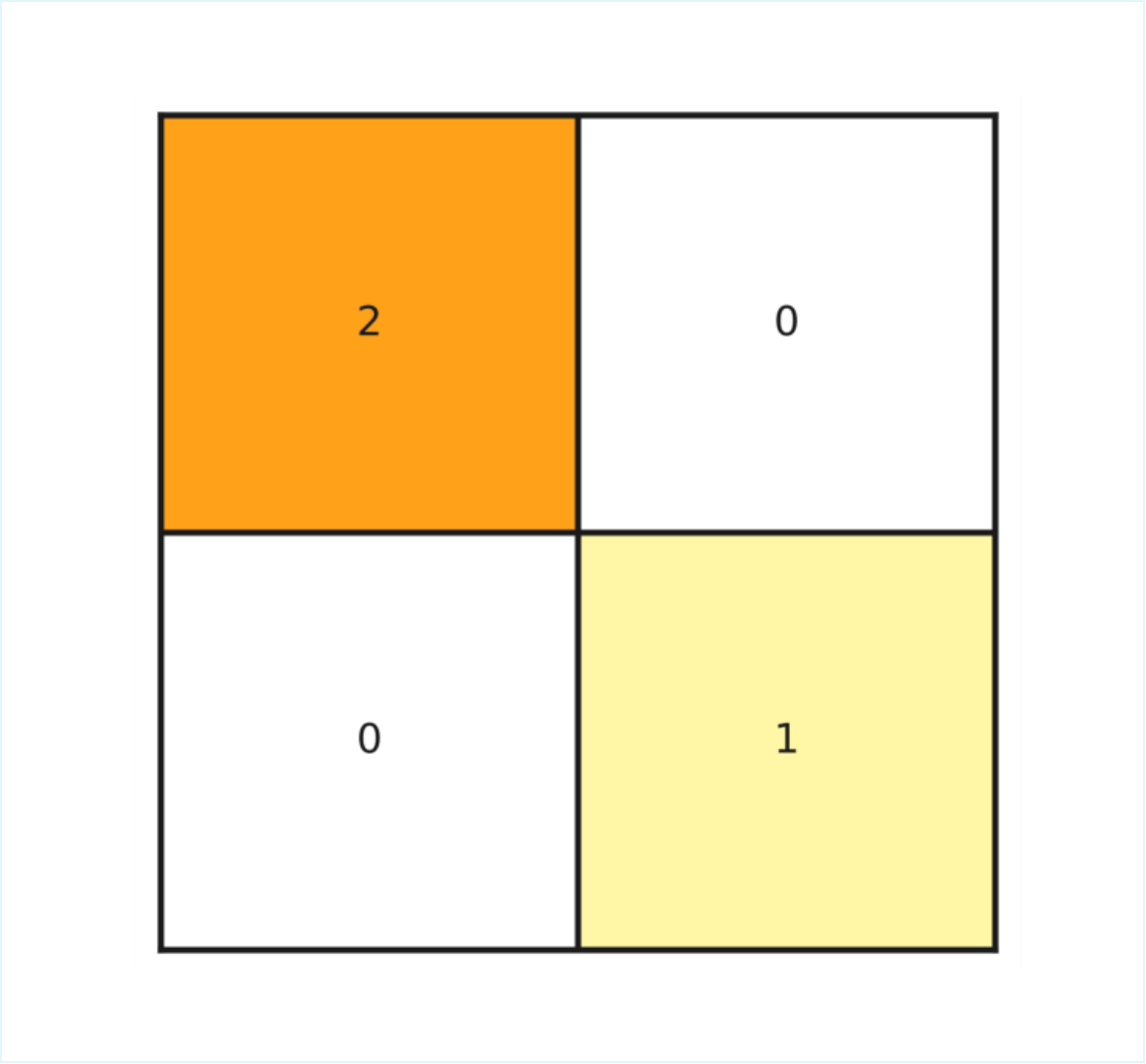}
        \caption[]%
        {{\small}}    
    \end{subfigure}
    \caption{Directed graph constructed in Algorithm 1 applied to generator $\tilde{A}_R^{2,1}$ in configuration $\gamma_i \eta$ (left) and sandpile configuration $(\gamma_i \eta)^{\tilde{A}_R^{2,1}}$ (right).} 
    \label{Figs_center_small_even}
\end{figure}
    It follows that
    \begin{equation*}
        \mathbb{E}[X(\gamma_i \eta)|Y \in \tilde{A}_R^{2,1}] = 3,
    \end{equation*}
    which agrees with expression (\ref{Exp_av_size_center_2}).

    Now, consider the case $N = 3$. We start by initializing the directed graph $(V, E)$ with $E = \emptyset$ and add edges from $v$ to each of its neighbours for each $v \in \tilde{A}_R^{3,1}$. At this point, the only vertex $v \notin \tilde{A}_R^{3,1}$ that satisfies $(\gamma_i \eta)(v) + \text{indeg}(v) - \text{outdeg}(v) \geq 4$ is the vertex $v_i$. Thus, we add edges to $E$ from $v_i$ to each of its neighbours. Since each neighbour of $v_i \in R_1(A^{(N)})$ is part of $\tilde{A}_R^{3,1}$, step 4 of Algorithm \ref{Avalanche_analysis} terminates at this point. We obtain $w_{\tilde{A}_R^{3,1}}(\gamma_i \eta) = 9$. Observe that for each $v \in \delta^{\text{in}}(A^{(N)})$, we have $\text{indeg}(v) = 3$. Hence, $(\gamma_i\eta)^{\tilde{A}_R^{3,1}}(v) = 2$. Also, for each $v \in C(A^{(N)})$, we have $\text{indeg}(v) = 2$. Therefore, $(\gamma_i\eta)^{\tilde{A}_R^{3,1}}(v) = 1$. Finally, we have $\text{indeg}(v_i) = \text{outdeg}(v_i) = 4$, and thus, $\eta^{\tilde{A}_R^{3,1}}(v_i) = 0$. The directed graph constructed in the algorithm and the sandpile configuration $(\gamma_i\eta)^{\tilde{A}_R^{3,1}}$ after the first wave are depicted in Figure \ref{Figs_center_small}. It follows that applying Algorithm \ref{Avalanche_analysis} in the case $N = 3$ yields
    \begin{equation*}
        \mathbb{E}[X(\gamma_i \eta)|Y \in \tilde{A}^{3,1}_R] = 9,
    \end{equation*}
    which is consistent with expression (\ref{Exp_av_size_center_2}). 

       \begin{figure}[H]
    \centering
    \captionsetup[subfigure]{justification=centering, labelformat=empty, singlelinecheck=false, width=0.6\linewidth}

    \begin{subfigure}[b]{0.45\textwidth}
        \centering
        \includegraphics[width=0.7\textwidth]{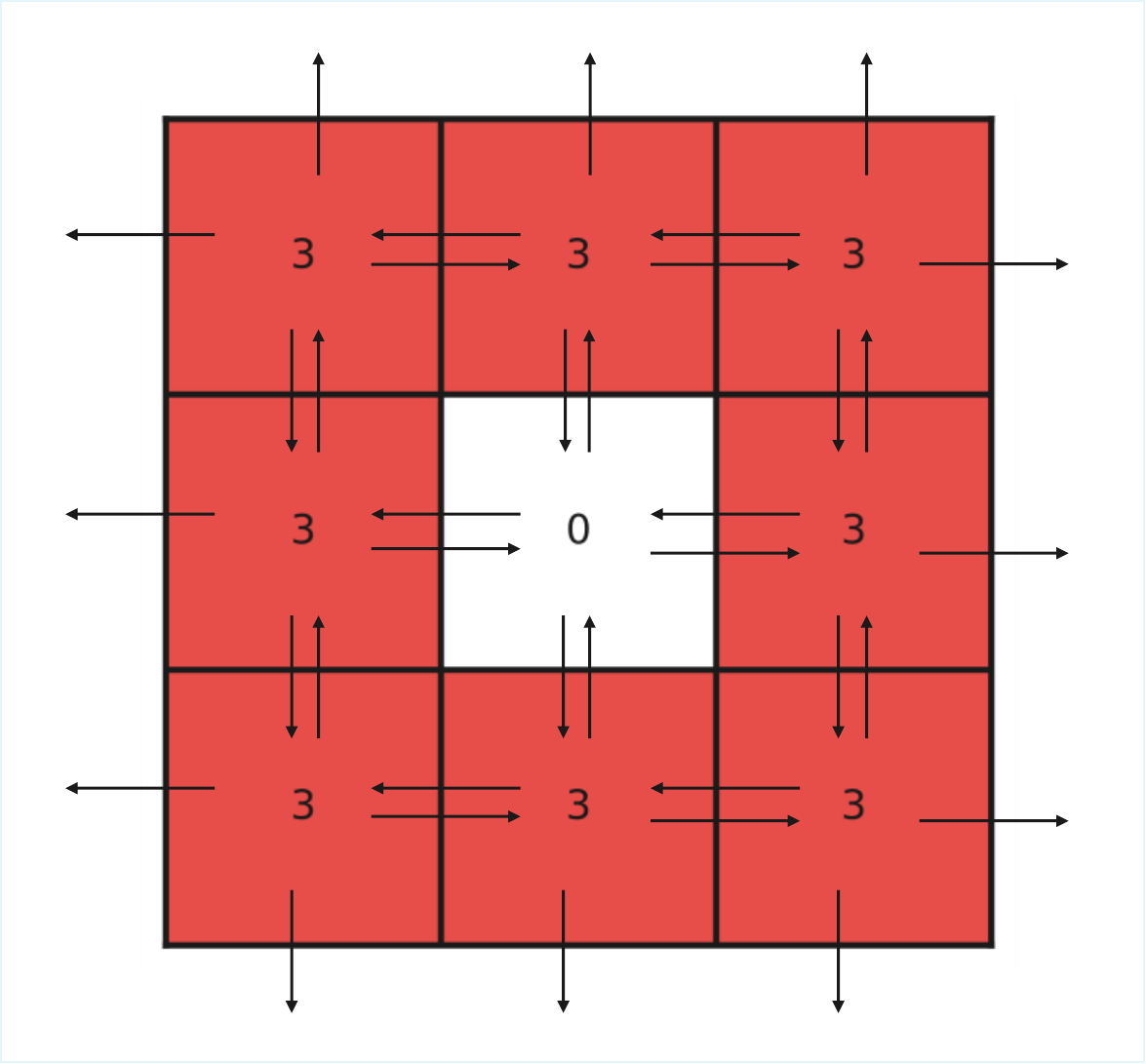}
        \caption[]%
        {{\small }}    
    \end{subfigure}
    \begin{subfigure}[b]{0.45\textwidth}  
        \centering 
        \includegraphics[width=0.7\textwidth]{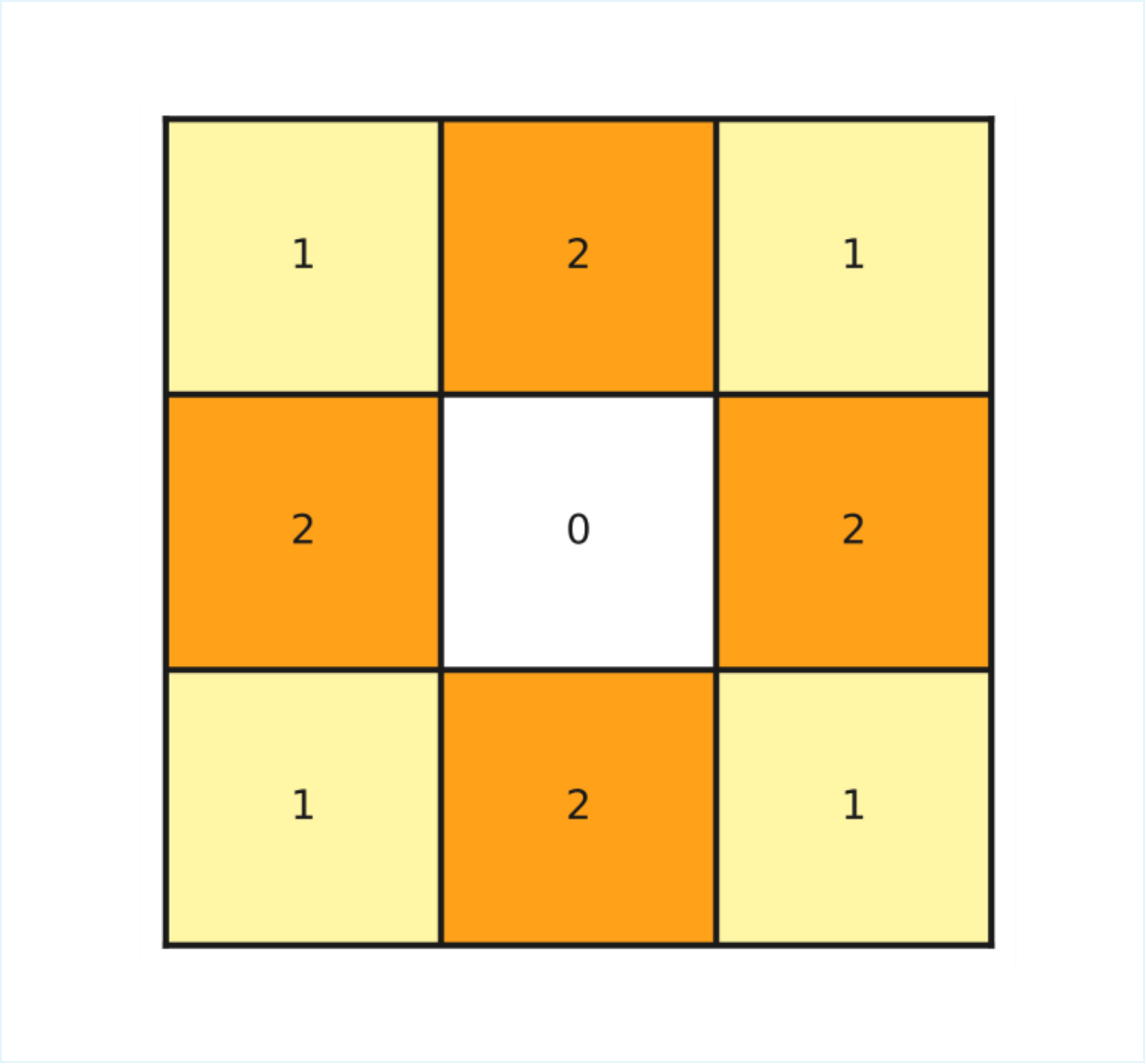}
        \caption[]%
        {{\small}}    
    \end{subfigure}
    \caption{Directed graph constructed in Algorithm 1 applied to generator $\tilde{A}_R^{3,1}$ in configuration $\gamma_i \eta$ (left) and sandpile configuration $(\gamma_i\eta)^{\tilde{A}_R^{3,1}}$ (right).} 
    \label{Figs_center_small}
\end{figure}
    Now, we assume that expression (\ref{Exp_av_size_center_2}) holds for $N = k-2$ for some $k = 3, 4, \ldots$. Consider the generator $\tilde{A}_R^{k,1}$ in the configuration $\gamma_i \eta$. The directed graph and the sandpile $(\gamma_i\eta)^{\tilde{A}_R^{k,1}}$ that remains after the first wave are provided in Figure~\ref{Figs_center} for $k = 5$. The algorithm evolves in a similar way for even $k$, although in this case, there are four options for the vertex $v_i$.

       \begin{figure}[H]
    \centering
    \captionsetup[subfigure]{justification=centering, labelformat=empty, singlelinecheck=false, width=0.6\linewidth}

    \begin{subfigure}[b]{0.45\textwidth}
        \centering
        \includegraphics[width=0.7\textwidth]{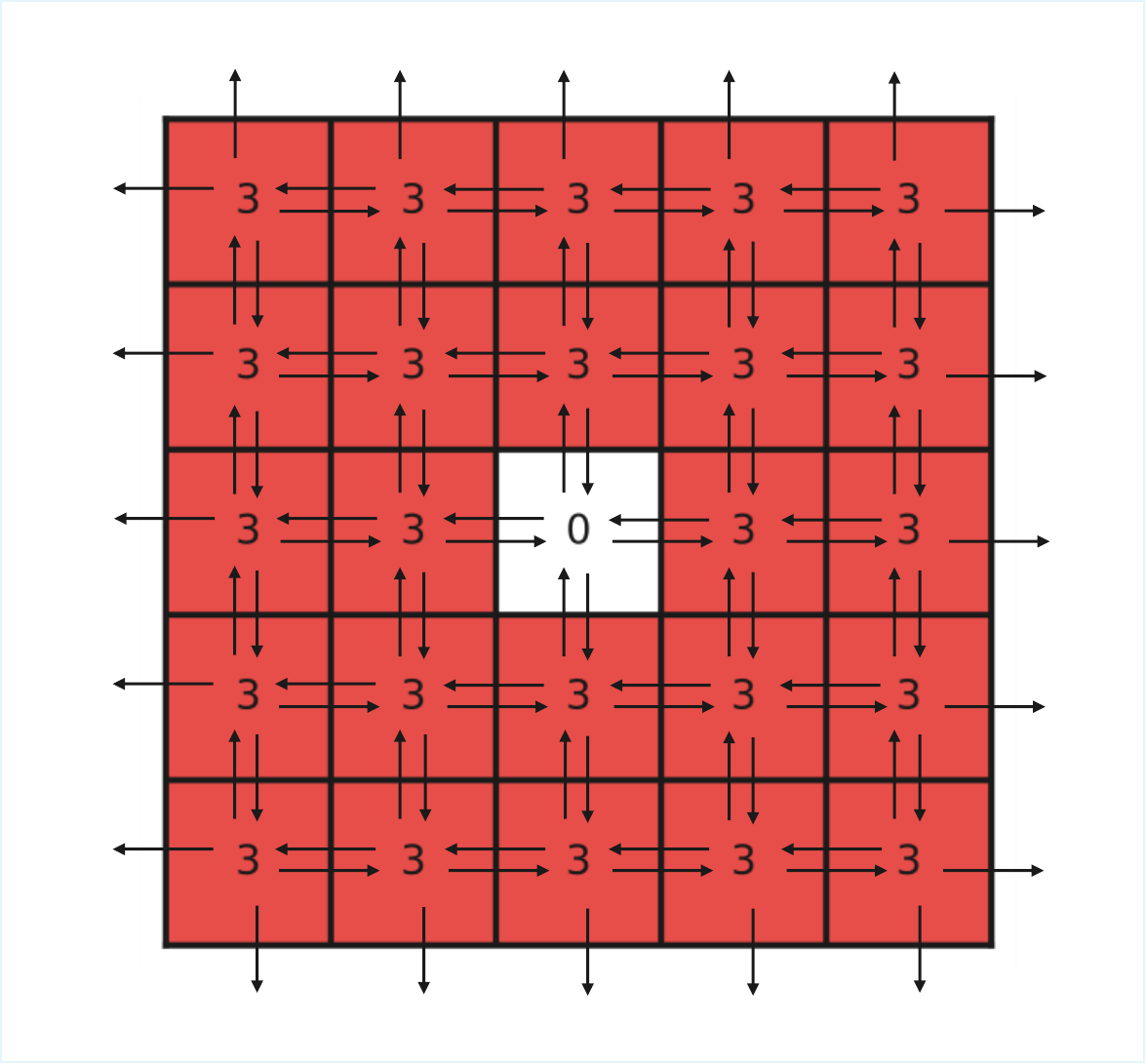}
        \caption[]%
        {{\small }}    
    \end{subfigure}
    \begin{subfigure}[b]{0.45\textwidth}  
        \centering 
        \includegraphics[width=0.7\textwidth]{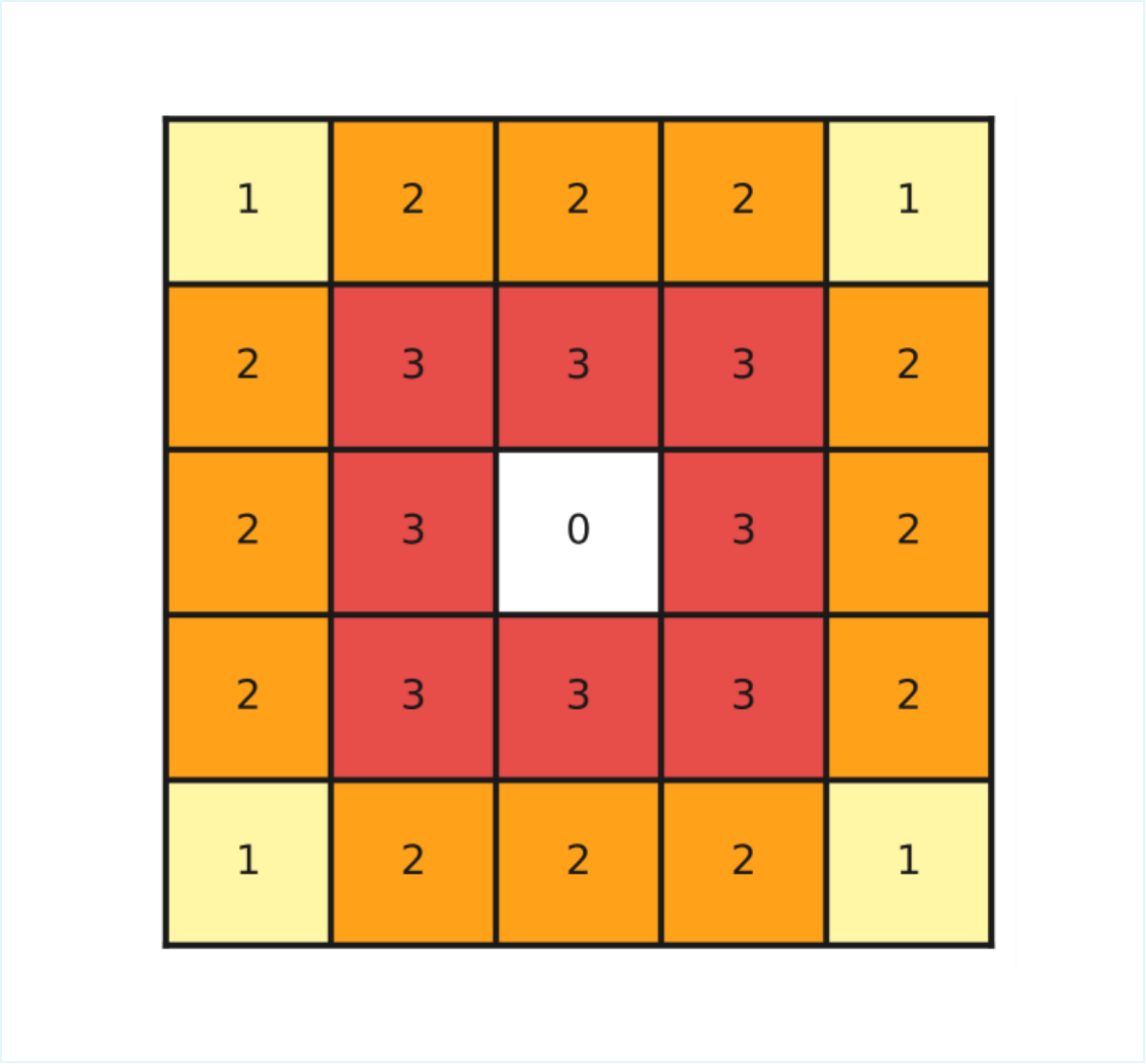}
        \caption[]%
        {{\small}}    
    \end{subfigure}
    \caption{Directed graph constructed in Algorithm 1 applied to generator $\tilde{A}_R^{k,1}$ in configuration $\gamma_i \eta$ (left) and sandpile configuration $(\gamma_i\eta)^{\tilde{A}_R^{k,1}}$ (right) for $k = 5$.} 
    \label{Figs_center}
\end{figure}

    It follows that Algorithm \ref{Avalanche_analysis} yields $w_{\tilde{A}_R^{k,1}}(\gamma_i \eta) = k^2$ and that the remainder of the generator $\tilde{A}_R^{k,1}$ after the first wave is a generator $\tilde{A}_R^{k-2,1}$ in the sandpile configuration $(\gamma_i \eta)^{\tilde{A}_R^{k,1}}$. Inserting this in expression (\ref{exp_av_size_rec_eq}) and using the induction hypothesis now yields
    \begin{equation*}
        \mathbb{E}[X(\gamma_i \eta)|Y \in \tilde{A}_R^{k,1}] = k^2 + \dfrac{(k-2)^2-1}{k^2-1} \dfrac{(k-2)((k-2)^2+5(k-2)+6)}{10} = \dfrac{k(k^2 + 5k+6)}{10}
    \end{equation*}
    for odd $k$ and 
    \begin{align*}
        \mathbb{E}[X(\gamma_i \eta)|Y \in \tilde{A}^{k,1}_R] &= k^2 + \dfrac{(k-2)^2-1}{k^2-1} \dfrac{(k-2)^5 + 5(k-2)^4 + 5(k-2)^3-5(k-2)^2-6(k-2)-30}{10((k-2)^2-1)}\\
        &= \dfrac{k^5 + 5k^4 + 5k^3 - 5k^2-6k -30}{10(k^2-1)}
    \end{align*}
    for even $k$. This establishes the validity of expression (\ref{Exp_av_size_center_2}) for all positive integers $N$.  
\end{proof}

\begin{lemma}\label{lem_cornerstones_R}
Consider a generator $A^{(N)}$ in a sandpile $\eta \in \Omega$ that has the form of an $N \times N$ square, where $N \geq 3$, and let $v_i \in R_k(A^{(N)}) \setminus C\big(\bigcup_{j=1}^k R_j(A^{(N)}) \big)$, $k = 2, \ldots, \lceil N/2 \rceil$.  Then,
\begin{equation}\label{Exp_av_size_R}
    \mathbb{E}[X(\gamma_i \eta)|Y \in A^{(N)}] = \begin{cases}
        \dfrac{3N^5+15N^4 + 15N^3-15N^2-18N+10(4k^3  - 24k^2 + 23k -6)}{30N^2}, &\text{if } N \text{ is odd,} \\[1.2em]
        \dfrac{3N^5 + 15N^4 + 15N^3 - 15N^2 - 18N + 20k(2k^2-9k+1)}{30N^2}, &\text{if } N \text{ is even.}
    \end{cases}
\end{equation}
    
\end{lemma}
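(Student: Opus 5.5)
The plan is to mirror the proof of Lemma~\ref{lem_cornerstones_center}: run Algorithm~\ref{Avalanche_analysis} wave by wave on the perturbed configuration $\gamma_i\eta$ and feed the outcome into the recursion of Lemma~\ref{exp_av_size_rec_thm}. As there, I first condition on $Y$. A grain landing on $v_i$ (now at height $0$) causes no avalanche, so
\begin{equation*}
\mathbb{E}[X(\gamma_i\eta)|Y\in A^{(N)}]=\frac{N^2-1}{N^2}\,\mathbb{E}[X(\gamma_i\eta)|Y\in A^{(N)}\setminus\{v_i\}].
\end{equation*}
For $N\ge 3$ and $v_i$ not a corner of its ring, $A^{(N)}\setminus\{v_i\}$ is still a connected generator, so it suffices to compute the right-hand conditional expectation.

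\textbf{Phase 1 ($v_i$ is an interior vertex).} Suppose $k<\lceil N/2\rceil$. In the first wave every vertex of $A^{(N)}\setminus\{v_i\}$ topples. Then $v_i$ receives $4$ grains, so it reaches height $4$ and topples in step~4 of the algorithm. The wave stops there, because every neighbour of $v_i$ lies in the generator, so the first wave has size $N^2$. After this wave:
\begin{itemize}
\item $v_i$ is back at height $0$;
\item the interior of the square is back at height $3$;
\item the inner boundary and the corners are noncritical.
\end{itemize}
Hence the new set of critical vertices is the $(N-2)\times(N-2)$ square minus $v_i$. Rings are indexed from the centre, so $v_i$ still lies in ring $k$ of the smaller square and is still not a corner of it. Iterating this $\lceil N/2\rceil-k$ times, with a telescoping product of weights $((N-2j)^2-1)/(N^2-1)$ in front of each subsequent wave, reduces the problem to $N=M$, where $M=2k-1$ if $N$ is odd and $M=2k$ if $N$ is even. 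In that reduced problem $v_i$ sits on the inner boundary of the $M$-square.

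\textbf{Phase 2 ($v_i$ is on the boundary of the current square).} Now $v_i$ receives only $3$ grains, so it does not topple, and the wave has size $M^2-1$. Its inward neighbour $u$ receives only $3$ grains and ends at height $2$. So after this wave the critical set is the $(M-2)$-square minus the boundary, non-corner vertex $u$, and $u$ has height $2$ rather than $0$. In the next wave $u$ receives $3$ grains, reaches $5$ and topples, and this changes the pattern. I therefore plan to introduce an auxiliary family of generators: an $m\times m$ square with one non-corner boundary vertex removed, that vertex carrying a prescribed height $h\in\{0,1,2\}$. For this family I would apply Algorithm~\ref{Avalanche_analysis} to get the wave size, the new defect height and location, and whether the remaining critical set splits into several components $A_j$. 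I expect that after a bounded number of waves the defect either vanishes, leaving a full square to which Theorem~\ref{thm_exp_av_size_square} applies, or settles into a periodic pattern. Either outcome would give a closed-form recursion that I can solve by induction on $m$ in steps of $2$, in the style of the proofs of Theorems~\ref{thm_exp_av_size_square} and~\ref{thm_depth_square}.

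\textbf{Assembly and main obstacle.} Finally, I will combine Phase~1, the auxiliary lemma for the $M$-square, and the base cases $k=2$ (i.e.\ $M=3$ or $4$), which I will check directly. The Phase~1 contribution is a sum of $(N-2j)^2$ weighted by $((N-2j)^2-1)/(N^2-1)$, which is a polynomial sum in $N$; the $k$-dependence in expression~(\ref{Exp_av_size_R}) should then come entirely from the auxiliary term at size $M$, and the odd and even cases separate naturally through $M$. The main obstacle is Phase~2: correctly tracking the defect's height and position through successive waves, and checking whether the critical set ever disconnects, since in that case the weights $|A_j|/|A|$ in Lemma~\ref{exp_av_size_rec_thm} must be split. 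I would first tabulate the waves explicitly for $m=3,4,5,6$ using Algorithm~\ref{Avalanche_analysis} to guess the invariant pattern, then prove that pattern by induction and verify that the resulting polynomial matches expression~(\ref{Exp_av_size_R}).
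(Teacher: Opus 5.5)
Your Phase~1 is correct and matches the paper's argument. There it is phrased as an induction from $N-2$ to $N$ at fixed $k$: the first wave has size $N^2$, and it leaves the $(N-2)$-square minus $v_i$ as the new generator, with $v_i$ still a non-corner vertex of ring $k$.

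The gap is Phase~2, which is where the work lies. You leave it as a plan: an auxiliary family of defective squares, plus a guess that the defect either vanishes or becomes periodic. The family you propose also misdescribes the state. After the first wave on the $M$-square, $v_i$ has received $3$ grains and sits at height $3$, just outside the new generator and adjacent to $u$; your description drops this. In addition, $u$ is a \emph{corner} of the $(M-2)$-square whenever $v_i$ is adjacent to a corner of the $M$-square (e.g.\ $M=5$ with $v_i$ next to a corner), so restricting to non-corner defects does not cover all cases. If you analyse ``an $m\times m$ square with one boundary vertex at height $h$'' as a stand-alone generator with noncritical surroundings, you get a second wave of size $(M-2)^2$ instead of $(M-2)^2+1$. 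The resulting polynomial will then not match (\ref{Exp_av_size_R}).

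The missing step is short and needs no auxiliary family.
\begin{itemize}
\item In the second wave the $(M-2)$-square minus $u$ topples.
\item $u$, at height $2$, receives $3$ grains (or $2$ if it is a corner of the $(M-2)$-square), so it reaches at least $4$ and topples.
\item This gives $v_i$ its fourth grain, so $v_i$ also topples. Nothing else topples: the outer neighbour of $v_i$ was at most at $2$, received nothing in the previous wave, and so ends at most at $3$.
\item The wave therefore has size $(M-2)^2+1$. Afterwards $v_i$ is at $0$, $u$ at $2$ (or $1$), and the rest of the boundary of the $(M-2)$-square is at most at $2$.
\end{itemize}
So the critical set is exactly the full $(M-4)$-square, and Theorem~\ref{thm_exp_av_size_square} applies to it. Write $E_M$ for the expected avalanche size given that the grain lands in $A^{(M)}\setminus\{v_i\}$. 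For $M\geq 5$ this gives
\begin{equation*}
E_M = M^2-1+\frac{(M-2)^2-1}{M^2-1}\Bigl[(M-2)^2+1+\frac{(M-4)^2}{(M-2)^2-1}\cdot\frac{3(M-4)^4+15(M-4)^3+20(M-4)^2-8}{30(M-4)}\Bigr]=\frac{M(3M^4+15M^3+20M^2-60M-8)}{30(M^2-1)}.
\end{equation*}
Direct inspection gives $E_3=8$ and $E_4=16$, consistent with the same closed form. Multiplied by $(M^2-1)/M^2$, this is exactly (\ref{Exp_av_size_R}) with $N=M$ and $k=\lceil M/2\rceil$. This is the paper's treatment of the case $k=\lceil N/2\rceil$, and together with your Phase~1 it proves the lemma.
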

\begin{proof}
    See Appendix \ref{Appendix}.
\end{proof}

\vspace{5mm}

\begin{lemma}\label{lem_cornerstones_C}
    Consider a generator $A^{(N)}$ in a sandpile $\eta \in \Omega$ that has the form of an $N \times N$ square, where $N \geq 3$, and let $v_i \in C\left(\bigcup_{j=1}^k R_j(A^{(N)}) \right), \quad k = 2, \ldots, \lceil N/2 \rceil$.  Then,
    \begin{equation}\label{Exp_av_size_C}
        \mathbb{E}[X(\gamma_i \eta)|Y \in A^{(N)}] = \begin{cases}
            \dfrac{3N^5 + 15N^4 + 15N^3-15N^2 - 18N+10(4k^3 - 24k^2+23k-3)}{30N^2}, &\text{if } N \text{ is odd,} \\[1.2em]
            \dfrac{3N^5 + 15N^4 + 15N^3 - 15N^2 - 18N + 10(4k^3 - 18k^2 + 2k + 3)}{30N^2}, &\text{if } N \text{ is even.}
        \end{cases}
    \end{equation}
\end{lemma}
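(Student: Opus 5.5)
Removing the grains at $v_i$ splits the square generator, so I would run Algorithm \ref{Avalanche_analysis} wave by wave, as in the proof of Lemma \ref{lem_cornerstones_center}, and follow how the hole at $v_i$ changes the peeling of successive rings. Write $A=A^{(N)}$ and $\tilde A=A\setminus\{v_i\}$. Here $v_i$ is a corner of the $k$th ring $R_k$, i.e. a corner of the $m\times m$ sub-square $S_k:=\bigcup_{j\le k}R_j$, with $m=2k-1$ for odd $N$ and $m=2k$ for even $N$. Conditioning on $Y$ gives
\[
\mathbb{E}[X(\gamma_i\eta)\mid Y\in A]=\frac{N^2-1}{N^2}\,\mathbb{E}[X(\gamma_i\eta)\mid Y\in\tilde A],
\]
so it suffices to compute the second factor. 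Since $N\geq 3$ and $v_i$ is not an outer corner of $A$ when $k<\lceil N/2\rceil$, $\tilde A$ is connected and is the generator, so Lemma \ref{exp_av_size_rec_thm} applies.

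\textbf{Phase 1 (outer waves).} I would show that for the first $r:=\lceil N/2\rceil-k$ waves the hole is invisible at the boundary. In wave $t$ the current generator is the square of side $N-2(t-1)$ minus $v_i$, and $v_i$ lies strictly inside it. Its four neighbours all topple, so $v_i$ receives $4\geq 4$ grains and topples as well. The first wave therefore has size $(N-2(t-1))^2$, exactly as in Theorem \ref{thm_exp_av_size_square}, and afterwards $v_i$ holds $0$ grains again. The new generator is the next inner square minus $v_i$, with only one component, so no splitting weights appear beyond the ratio of the sizes of consecutive punctured squares. By the telescoping structure of expression (\ref{exp_av_size_rec_eq}), wave $t$ then contributes $(N-2(t-1))^2\cdot\frac{(N-2(t-1))^2-1}{N^2-1}$ to $\mathbb{E}[X(\gamma_i\eta)\mid Y\in\tilde A]$.

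\textbf{Phase 2 (punctured corner).} After Phase 1 the generator is $S_k\setminus\{v_i\}$ with $v_i$ a corner of the $m\times m$ square. Its two neighbours inside the square topple, so $v_i$ reaches only $2<4$ grains and does not topple. The wave then has size $m^2-1$, and the neighbours of $v_i$ lose one more incoming grain than before. The recursive structure changes, and this is the main obstacle: I expect the remaining critical set to be an $(m-2)\times(m-2)$ square with a defect propagating along the diagonal, or a shape whose further waves shrink like punctured corners. I would first compute $m=3,4,5,6$ by hand using Algorithm \ref{Avalanche_analysis} to guess the wave sequence and component sizes. Then I would prove the pattern by a separate induction on $m$, in the spirit of Theorem \ref{thm_exp_av_size_square}, tracking the $\eta^A$ values near the diagonal.

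\textbf{Assembly.} Finally I would sum the Phase 1 contributions, each a polynomial in $N$ and $t$ summed over $t=1,\dots,r$, which gives closed forms via the standard power sums. I would add the Phase 2 contribution, a polynomial in $k$, weighted by $\frac{m^2-1}{N^2-1}$, and simplify. The target expression should come out as the full-square quantity $(3N^5+15N^4+15N^3-15N^2-18N)/(30N^2)$, i.e. $\frac{N^2-1}{N^2}\mathbb{E}[X(\eta)\mid Y\in A]$-type terms, plus a cubic in $k$. The parity split enters only through $m$ and $r$. As sanity checks, the formula should compare to Lemma \ref{lem_cornerstones_R} (they differ by $30k\cdot$const$/(30N^2)$ for odd $N$), and it should match direct computation for $N=3,4$.
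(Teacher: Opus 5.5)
Your Phase~1 is correct and matches the paper's outer induction on $N$: one ring is peeled per wave, and $v_i$ receives four grains, topples, and returns to $0$. Your first Phase~2 wave is also right: it has size $m^2-1$, and $v_i$ collects only two grains; the paper treats this as the case $k=\lceil n/2\rceil$. The gap is that you stop where the corner case has its content. You leave what happens after that wave as a conjecture, to be guessed from $m=3,\dots,6$ and proved by a separate induction on $m$. Both shapes you anticipate are wrong: an $(m-2)\times(m-2)$ square with a defect along the diagonal, or further punctured-corner waves. As written, the Phase~2 term, and with it the cubic in $k$, is never derived.

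The missing observation is short. The two neighbours of $v_i$ inside $S_k$ lie in $\delta^{\text{in}}(S_k)$, and no vertex of $\text{int}(S_k)$ is adjacent to $v_i$. So every vertex of $\text{int}(S_k)$ still receives four grains and ends the wave at $3$. Meanwhile $v_i$ ends at $2$, its two neighbours at $1$, the rest of $\delta^{\text{in}}(S_k)$ at $2$, the other corners at $1$, and all vertices outside $S_k$ stay below $4$. The remaining critical set is therefore the \emph{full} square $\text{int}(S_k)$ of side $m-2$, enclosed by noncritical vertices, and the hole never influences a later wave. Writing $\zeta$ for the configuration after Phase~1, Theorem~\ref{thm_exp_av_size_square} closes Phase~2 immediately:
\[
\mathbb{E}[X(\zeta)\mid Y\in S_k\setminus\{v_i\}]
= m^2-1+\frac{(m-2)^2}{m^2-1}\cdot\frac{3(m-2)^4+15(m-2)^3+20(m-2)^2-8}{30(m-2)}
= \frac{3m^4+18m^3+38m^2-22m-30}{30(m+1)} .
\]
This is the value the paper obtains, and no diagonal tracking or induction on $m$ is needed. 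With it, your power-sum assembly goes through and is equivalent to the paper's ring-by-ring recursion. One correction to your sanity check: for both parities, the corner formula exceeds the formula of Lemma~\ref{lem_cornerstones_R} by exactly $1/N^2$, independently of $k$.
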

\begin{proof}
    See Appendix \ref{Appendix}.
\end{proof}

Theorem \ref{thm_cornerstones_square_GOA} now provides a characterization of the set of cornerstone vertices for square-shaped generators, i.e., the vertices for which the minimum in expression (\ref{stability_level}) is attained.

\begin{theorem}\label{thm_cornerstones_square_GOA}
Consider a generator $A^{(N)}$ in a sandpile $\eta\in \Omega$ that has the form of an $N \times N$ square. The set of cornerstone vertices corresponding to this generator is given by
\begin{equation*}
    B^*_{A^{(N)}}(\eta) = \begin{cases}
        R_1(A^{(N)}), &\text{if } N = 1, 2, \\
        R_2(A^{(N)}) \setminus C(A^{(N)}), &\text{if } N = 3, 4, \\
        R_3(A^{(N)}) \setminus C(\bigcup_{i=1}^3 R_i(A^{(N)})), &\text{if } N \geq 5.
        \end{cases}
\end{equation*}
\end{theorem}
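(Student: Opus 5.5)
Since the denominator $\mathbb{E}[X(\eta)|Y \in A^{(N)}]$ in the stability level does not depend on $v_i$, the cornerstone set is exactly the set of $v_i \in A^{(N)}$ minimizing $\mathbb{E}[X(\gamma_i \eta)|Y \in A^{(N)}]$. Every vertex of the square is covered by exactly one of Lemmas \ref{lem_cornerstones_center}, \ref{lem_cornerstones_R} or \ref{lem_cornerstones_C}:
\begin{itemize}
\item $v_i \in R_1$ is handled by Lemma \ref{lem_cornerstones_center};
\item for each $k = 2, \ldots, \lceil N/2 \rceil$, the non-corner part of $R_k$ is handled by Lemma \ref{lem_cornerstones_R};
\item for each such $k$, the corners of $\bigcup_{j\le k} R_j$ are handled by Lemma \ref{lem_cornerstones_C}.
\end{itemize}
So the plan is to compare these closed-form expressions as functions of $k$, for fixed $N$ and parity.

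\textbf{Small cases.} For $N=1,2$ we have $R_1(A^{(N)}) = A^{(N)}$, so every vertex gives the same value and $B^* = R_1$. For $N = 1$ the value is trivially $0$; for $N=2$ it is $3\cdot 3/4$, by Lemma \ref{lem_cornerstones_center} or by symmetry. For $N=3$ and $N=4$, only $k=2$ occurs. We evaluate the three formulas directly:
\begin{itemize}
\item $N=3$: the center gives $8\cdot 30/30 = 8$; the Lemma \ref{lem_cornerstones_R} numerator at $k=2$ is $729+1215+405-135-54+10(32-96+46-6) = 1920$, giving $1920/270 \approx 7.11$; Lemma \ref{lem_cornerstones_C} adds $30$ to that numerator, giving $1950/270$.
\item $N=4$: the analogous computation shows that the edge (non-corner) vertices of $R_2$ win strictly.
\end{itemize}

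\textbf{General case.} For $N \ge 5$, all expressions for $k \ge 2$ share the common part $P(N) = 3N^5+15N^4+15N^3-15N^2-18N$ over $30N^2$. The comparison therefore reduces to the $k$-dependent correction terms:
\begin{itemize}
\item odd $N$: $f(k) = 10(4k^3-24k^2+23k-6)$ for edge vertices and $g(k) = f(k)+30$ for corners;
\item even $N$: $f(k) = 20k(2k^2-9k+1)$ and $g(k) = 10(4k^3-18k^2+2k+3)$.
\end{itemize}
Over integers $k \ge 2$, $f$ is minimized uniquely at $k=3$:
\begin{itemize}
\item odd: the values are $-240, -300, -140, \ldots$, since $4k^3-24k^2+23k$ has derivative $12k^2-48k+23$ with real minimizer near $3.5$; checking $k=3,4$ gives $-21 < -9$ in units of $10$, and the function increases afterwards;
\item even: $k(2k^2-9k+1)$ equals $-24$ at $k=2$, $-24$ at $k=3$, and $-12$ at $k=4$, i.e.\ $f = -480, -480, -240$.
\end{itemize}
This tie in the even case needs care. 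In the even case $g(k) - f(k) = 10(6k+3) > 0$, so corners never win. I must therefore recheck the tie at $k=2$ versus $k=3$ for even $N$ against the claimed theorem. Possibly I mis-evaluated: $2\cdot 8 - 36 + 1 = -19$, times $2$ gives $-38$ and $f(2) = -760$; $2\cdot 27 - 81 + 1 = -26$, times $3$ gives $-78$ and $f(3) = -1560$. So $k=3$ is the strict minimizer, which resolves the concern.

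\textbf{Comparison with the center.} Finally, the center value must be compared with the $k=3$ edge value. Write Lemma \ref{lem_cornerstones_center} over the common denominator $30N^2$:
\begin{itemize}
\item odd $N$: $3N(N^2-1)(N^2+5N+6) = P(N) + 30N^2 + \ldots$;
\item even $N$: $3(N^5+5N^4+5N^3-5N^2-6N-30) = P(N) - 90$.
\end{itemize}
So the center corresponds to a correction of roughly $-90$ (even) or a positive term (odd), and the $k=3$ value ($-300$ or $-1560$) is smaller. The main obstacle is purely bookkeeping: carefully expanding the odd center formula and verifying strict inequalities. I will do this with one polynomial subtraction per parity, together with a short monotonicity argument in $k$, by checking that the successive differences $f(k+1)-f(k)$ are positive for $k \ge 3$.
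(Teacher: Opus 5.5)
Your strategy is the paper's: since the denominator of $\lambda$ does not depend on $v_i$, you compare the closed forms of Lemmas \ref{lem_cornerstones_center}--\ref{lem_cornerstones_C}, minimize over $k$, and compare the winner with the centre. Comparing corners with edges at the same $k$, so that $\min_k g > \min_k f$ follows from $g > f$, is slightly more economical than the paper's separate minimization of (\ref{Exp_av_size_C}).

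For this theorem, however, the arithmetic is the entire proof, and for $N \ge 5$ almost every number you write is wrong. Your conclusions survive only because no error reverses an inequality.
\begin{itemize}
\item Odd $N$: $f(2),f(3),f(4),f(5) = -240,-450,-420,90$, not $-240,-300,-140$. In units of $10$ the values at $k=3,4$ are $-45$ and $-42$, not $-21$ and $-9$.
\item Even $N$: $k(2k^2-9k+1) = -18,-24,-12$ at $k=2,3,4$, so $f = -360,-480,-240$. The tie you worried about came from evaluating $k=2$ as $-24$. Your ``resolution'' then evaluated $2k^3-9k^2+1$ instead of $2k^2-9k+1$, which is where the spurious $-760$ and $-1560$ come from. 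Both the concern and its fix are artifacts.
\item The corner offset is $g(k)-f(k)=30$ for both parities, not $10(6k+3)$.
\item For odd $N$, the centre numerator $3N(N^2-1)(N^2+5N+6)$ equals $P(N)$ exactly, so its correction is $0$, not $P(N)+30N^2+\dots$.
\end{itemize}

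With the correct values the argument closes. The increment $f(k+1)-f(k)$ equals $10(12k^2-36k+3)$ for odd $N$ and $120(k^2-2k-1)$ for even $N$. Both are negative at $k=2$ and positive for all $k\ge 3$, so $k=3$ is the unique minimizer over $\{2,\dots,\lceil N/2\rceil\}$. At $k=3$ the edge, corner and centre corrections are $-450,-420,0$ for odd $N$ and $-480,-450,-90$ for even $N$. This gives the strict ordering edge $<$ corner $<$ centre and reproduces the constants $450$ and $480$ in the corollary.

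The odd margin is thin: $f(4)-f(3)=30=g(3)-f(3)$, so this is exactly where careless evaluation can mislead. You should also carry out $N=4$ rather than assert it; the edge, corner and centre values are $15$, $241/16$ and $249/16$.
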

\begin{proof}
    The statement is obvious for $N = 1,2$. Now, consider $N = 3$. Let $v_i \in R_2(A^{(3)})\setminus C(A^{(3)})$, $v_j \in C(A^{(3)})$ and $v_k \in R_1(A^{(3)})$. Comparing expressions (\ref{Exp_av_size_center}), (\ref{Exp_av_size_R}) and (\ref{Exp_av_size_C}), we obtain
    \begin{equation*}
        \mathbb{E}[X(\gamma_i \eta)|Y \in A^{(3)}] < \mathbb{E}[X(\gamma_j \eta)|Y \in A^{(3)}] < \mathbb{E}[X(\gamma_k \eta))|Y \in A^{(3)}].
    \end{equation*}
    Hence, we have $B^*_{A^{(3)}}(\eta) = R_2(A^{(3)}) \setminus C(A^{(3)})$. 

    We proceed to consider $N = 4$. Let $v_i \in R_2(A^{(4)})\setminus C(A^{(4)})$, $v_j \in C(A^{(4)})$ and $v_k \in R_1(A^{(4)})$. Again comparing expressions (\ref{Exp_av_size_center}), (\ref{Exp_av_size_R}) and (\ref{Exp_av_size_C}) yields
    \begin{equation*}
         \mathbb{E}[X(\gamma_i \eta)|Y \in A^{(4)}] < \mathbb{E}[X(\gamma_j \eta)|Y \in A^{(4)}] < \mathbb{E}[X(\gamma_k \eta))|Y \in A^{(4)}].
    \end{equation*}
    It follows that $B^*_{A^{(4)}}(\eta) = R_2(A^{(4)}) \setminus C(A^{(4)})$. 

    Now, consider $N \geq 5$. Let $v_{i_k} \in R_k(A^{(N)}) \setminus C(\bigcup_{j=1}^k R_j(A^{(N)}))$, $k = 2, \ldots, \lceil N/2 \rceil$. Minimizing expression (\ref{Exp_av_size_R}) with respect to $k$, we obtain
    \begin{equation*}
        \argmin_{k = 2, \ldots, \lceil N/2 \rceil} \mathbb{E}[X(\gamma_{i_k} \eta)|Y \in A^{(N)}] = 3
    \end{equation*}
    for both odd and even $N$. 
    
    Now, let $v_{i_k} \in C\left(\bigcup_{j=1}^k R_j(A^{(N)})\right)$, $k = 2, \ldots, \lceil N/2 \rceil$. Minimizing expression (\ref{Exp_av_size_C}) with respect to $k$ yields
    \begin{equation*}
        \argmin_{k = 2, \ldots, \lceil N/2 \rceil} \mathbb{E}[X(\gamma_{i_k} \eta)|Y \in A^{(N)}] = 3
    \end{equation*}
    for both odd and even $N$. 

    Consider $v_i \in R_3(A^{(N)}) \setminus C\left(\bigcup_{j=1}^3 R_j(A^{(N)})\right)$, $v_j \in C\left(\bigcup_{j=1}^3 R_j(A^{(N)})\right)$ and $v_k \in R_1(A^{(N)})$. 
    Comparing expressions (\ref{Exp_av_size_center}) and expressions (\ref{Exp_av_size_R}) and (\ref{Exp_av_size_C}) for $k = 3$ now yields
    \begin{equation*}
        \mathbb{E}[X(\gamma_i \eta)|Y \in A^{(N)}] < \mathbb{E}[X(\gamma_j \eta)|Y \in A^{(N)}] < \mathbb{E}[X(\gamma_k \eta)|Y \in A^{(N)}],
    \end{equation*}
    which completes the proof.
\end{proof}
A surprising aspect of the result in Theorem \ref{thm_cornerstones_square_GOA} is that the location of the set of cornerstone vertices does not scale with the size of the square. Observe that the result strikes a balance between the impact of the intervention on the largest possible avalanches and the amount of avalanches that are affected by the intervention. After all, emptying a vertex near the center of the square may prevent a large avalanche, but it has no effect on the size of avalanches triggered when the new sand grain lands farther from the center. On the other hand, emptying a more peripheral vertex mitigates many of the potential avalanches, but the reduction in their size is smaller.

\begin{corollary}
    Consider a generator $A^{(N)}$ in a sandpile $\eta \in \Omega$ that has the form of an $N \times N$ square. The stability level of this generator is given by
    \begin{equation*}
        \lambda(\eta, A^{(N)}) = \begin{cases}
            0, &\text{if } N = 1, \\[1.2em]
            9/16, &\text{if } N = 2, \\[1.2em]
            32/41, &\text{if } N = 3, \\[1.2em]
            15/17, &\text{if } N = 4, \\[1.2em]
            \dfrac{3N^5+15N^4+15N^3-15N^2-18N-450}{N(3N^4+15N^3+20N^2-8)}, &\text{if } N \geq 5, \quad N \text{ odd,}\\[1.2em]
            \dfrac{3N^5+15N^4+15N^3-15N^2-18N-480}{N(3N^4+15N^3+20N^2-8)}, &\text{if } N \geq 5, \quad N \text{ even.}
        \end{cases}
    \end{equation*}
\end{corollary}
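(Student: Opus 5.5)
By definition \eqref{stability_level}, $\lambda(\eta, A^{(N)})$ is the ratio $\mathbb{E}[X(\gamma_i \eta)|Y \in A^{(N)}]/\mathbb{E}[X(\eta)|Y \in A^{(N)}]$ evaluated at any cornerstone vertex $v_i \in B^*_{A^{(N)}}(\eta)$. The plan is therefore a direct substitution. Theorem \ref{thm_cornerstones_square_GOA} identifies the cornerstone set. Lemma \ref{lem_cornerstones_center} (for $N = 1, 2$) or Lemma \ref{lem_cornerstones_R} (for $N \geq 3$) supplies the numerator. Theorem \ref{thm_exp_av_size_square} supplies the denominator $(3N^4+15N^3+20N^2-8)/(30N)$. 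No new avalanche analysis is needed; we only combine the earlier results case by case.

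\textbf{Small cases.} For $N=1$, the cornerstone is the unique vertex. Emptying it leaves $\gamma_i\eta$ with no critical vertex in $A^{(1)}$, so the grain landing there causes no toppling, the numerator is $0$ and $\lambda = 0$. For $N=2$, we use the even case of \eqref{Exp_av_size_center} with $N=2$: the numerator is $(32+80+40-20-12-30)/40 = 9/4$ and the denominator is $4$, so $\lambda = 9/16$. For $N=3$, the cornerstones lie in $R_2\setminus C$, so we use the odd case of \eqref{Exp_av_size_R} with $k=2$. The correction term is $10(32-96+46-6)=-240$, giving a numerator of $1920/270 = 64/9$. The denominator is $820/90 = 82/9$, so $\lambda = 32/41$. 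For $N=4$, we use the even case with $k=2$. The correction term is $20\cdot 2\cdot(8-18+1) = -360$, giving a numerator of $7200/480 = 15$. The denominator is $2040/120 = 17$, so $\lambda = 15/17$.

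\textbf{General case $N\geq 5$.} By Theorem \ref{thm_cornerstones_square_GOA}, the cornerstones lie in $R_3 \setminus C(\bigcup_{i=1}^3 R_i)$, so we use \eqref{Exp_av_size_R} with $k=3$. For odd $N$, the correction term is $10(108-216+69-6) = -450$. For even $N$, it is $20\cdot 3\cdot(18-27+1) = -480$. Dividing the resulting numerator (over $30N^2$) by the denominator (over $30N$) cancels $30N$ and leaves one extra factor $N$ downstairs. This gives exactly the two stated rational expressions.

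The only delicate point is ensuring that the correct lemma is applied to the correct cornerstone set in each regime, in particular that $N=3,4$ use $k=2$ rather than $k=3$. Since this is fixed by Theorem \ref{thm_cornerstones_square_GOA}, what remains is careful arithmetic, which I would double-check numerically for a couple of values of $N$.
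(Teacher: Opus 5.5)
Your proposal is correct and is the same argument as the paper's proof: read off the cornerstone set from Theorem~\ref{thm_cornerstones_square_GOA}, treat $N=1$ directly, and divide the value from Lemma~\ref{lem_cornerstones_center} ($N=2$) or Lemma~\ref{lem_cornerstones_R} ($k=2$ for $N=3,4$, $k=3$ for $N\geq 5$) by the expression in Theorem~\ref{thm_exp_av_size_square}. Your arithmetic checks out: numerators $9/4$, $64/9$ and $15$ against denominators $4$, $82/9$ and $17$, and correction terms $-450$ (odd $N$) and $-480$ (even $N$).
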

\begin{proof}
    The case $N = 1$ is evident from expression (\ref{stability_level}). The remaining cases follow immediately from Theorem \ref{thm_exp_av_size_square}, Lemmas \ref{lem_cornerstones_center} -- \ref{lem_cornerstones_C} and Theorem \ref{thm_cornerstones_square_GOA}.
\end{proof}

\section{Conclusion}
\label{Conclusion}
As a paradigmatic model of self-organized criticality, the Abelian sandpile model provides a theoretical framework for large-scale cascading phenomena, including forest fires, earthquakes or financial market crashes. Since large avalanches in such settings correspond to catastrophic events, understanding how to mitigate them is of considerable practical interest. This work took a first step towards a theoretical understanding of the impact of interventions in the sandpile model. Specifically, we investigated the effect of removing sand grains from connected components of critical vertices. To study this effect, we first extended the method proposed by \citet{Dorso} for computing the expected size of an avalanche emanating from a given generator, and we provided a formal justification for the resulting scheme. Using this algorithm, we performed a detailed analysis of the impact of removing sand grains at different locations in square-shaped generators. This class of generators admits an explicit local analysis and a characterization of avalanche structures. Our results reveal a class of optimal target vertices, which we refer to as cornerstone vertices, that balance the tradeoff between reducing the maximal avalanche size and increasing the number of avalanches that are mitigated when a new sand grain is added to the square. Interestingly, the locations of these optimal targets do not scale with the size of the square. 

This work opens several promising directions for future research. Our analysis of square-shaped generators provides insight into the impact of interventions on avalanche structure and highlights the tradeoff between restricting maximal avalanche sizes and increasing the number of mitigated avalanches. It would be interesting to extend this analysis to generators drawn from the stationary distribution of the sandpile Markov chain. Such generators may initiate avalanches that cover a large part of the lattice, precluding a purely local analysis and potentially leading to fundamentally different optimal intervention strategies. 

Furthermore, rather than focusing solely on the immediate effect of interventions, it would be valuable to investigate long-term mitigation strategies, for example within a Markov decision process framework. Such an approach has recently been explored for developing control strategies in the Ising model \citep{deJongh1, deJongh2}. In addition, it may be worthwhile to explore alternative intervention mechanisms beyond sand grain removal, possibly inspired by specific applications. Examples include artificially triggering small avalanches to release stress and reduce the likelihood of larger events, or increasing the toppling thresholds of selected vertices.

\bibliographystyle{plainnat}
\bibliography{sandpile_library} 

\newpage

\appendix

\section{Remaining proofs}
\label{Appendix}

\subsection*{Proof of Lemma \ref{lem_cornerstones_R}}

Let $\tilde{A}_R^{N,k}$ denote the remainder of $A^{(N)}$ in the sandpile configuration $\gamma_i \eta$, i.e., $\tilde{A}_R^{N,k} = A^{(N)} \setminus \{v_i\}$. Conditioning on the vertex $Y$ at which the next sand grain lands, we obtain
    \begin{equation*}
        \mathbb{E}[X(\gamma_i \eta)|Y \in A^{(N)}] = \dfrac{N^2-1}{N^2} \mathbb{E}[X(\gamma_i \eta)|Y \in \tilde{A}^{N,k}_R].
    \end{equation*}
    It follows that it is sufficient to prove that
\begin{equation}\label{Exp_av_size_R_2}
    \mathbb{E}[X(\gamma_i \eta)|Y \in \tilde{A}^{N,k}_R] = \begin{cases}
        \dfrac{3N^5+15N^4 + 15N^3-15N^2-18N+10(4k^3-24k^2+23k-6)}{30(N^2-1)}, &\text{if } N \text{ is odd,} \\[1.2em]
        \dfrac{3N^5 + 15N^4 + 15N^3 - 15N^2 - 18N + 20k(2k^2-9k+1)}{30(N^2-1)}, &\text{if } N \text{ is even.}
    \end{cases}
\end{equation}
    We again prove this statement by induction over $N$. We start by showing that expression (\ref{Exp_av_size_R_2}) holds for $N = 3$ and $k = 2$ and for $N = 4$ and $k = 2$. First, suppose that $N = 3$ and $k = 2$. Figure \ref{Figs_R_small} shows the directed graph obtained by applying Algorithm \ref{Avalanche_analysis} to the generator $\tilde{A}_R^{3, 2}$ in the sandpile configuration $\gamma_i \eta$.

       \begin{figure}[H]
    \centering
    \captionsetup[subfigure]{justification=centering, labelformat=empty, singlelinecheck=false, width=0.6\linewidth}

    \begin{subfigure}[b]{0.45\textwidth}
        \centering
        \includegraphics[width=0.7\textwidth]{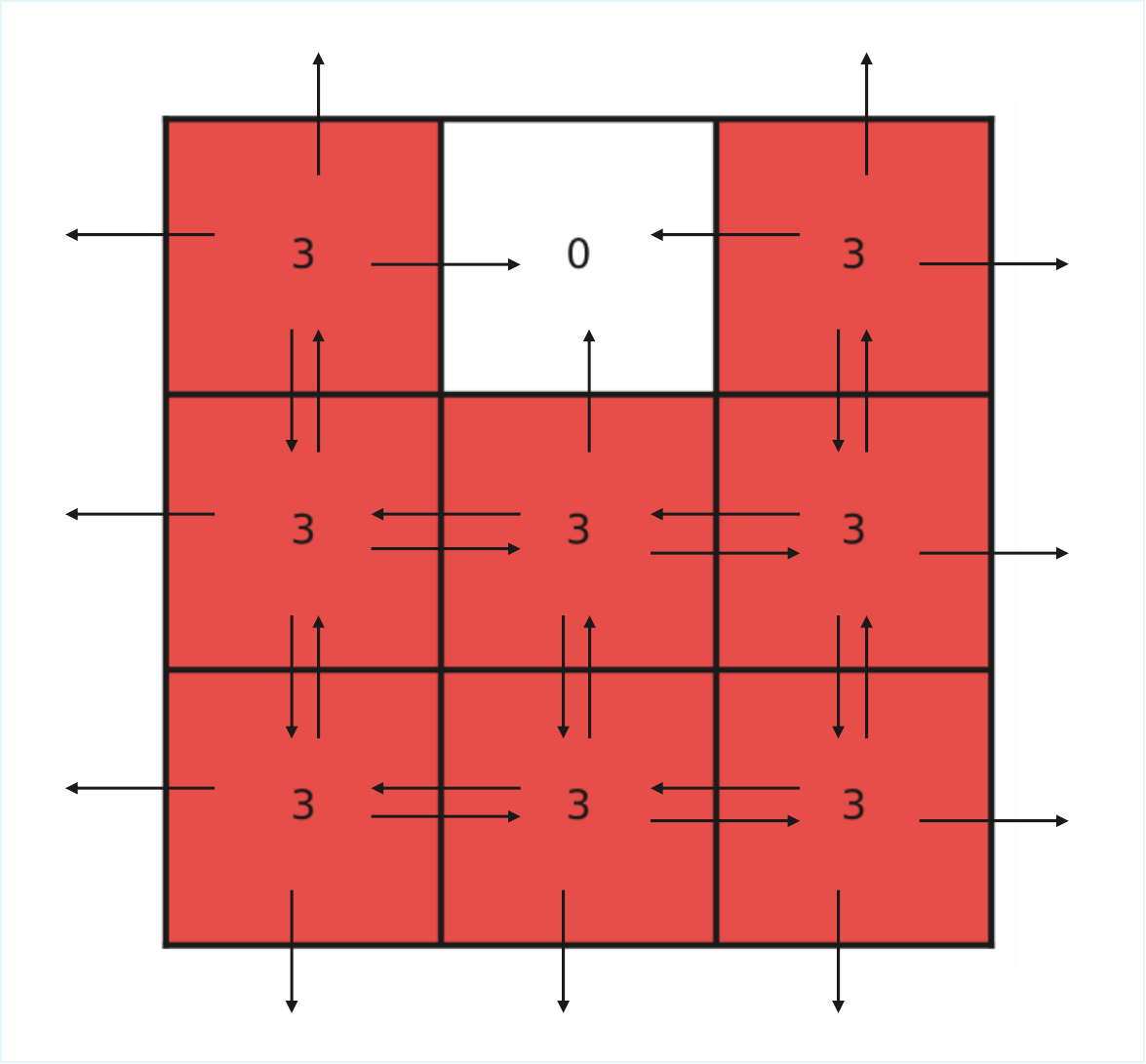}
        \caption[]%
        {{\small }}    
    \end{subfigure}
    \begin{subfigure}[b]{0.45\textwidth}  
        \centering 
        \includegraphics[width=0.7\textwidth]{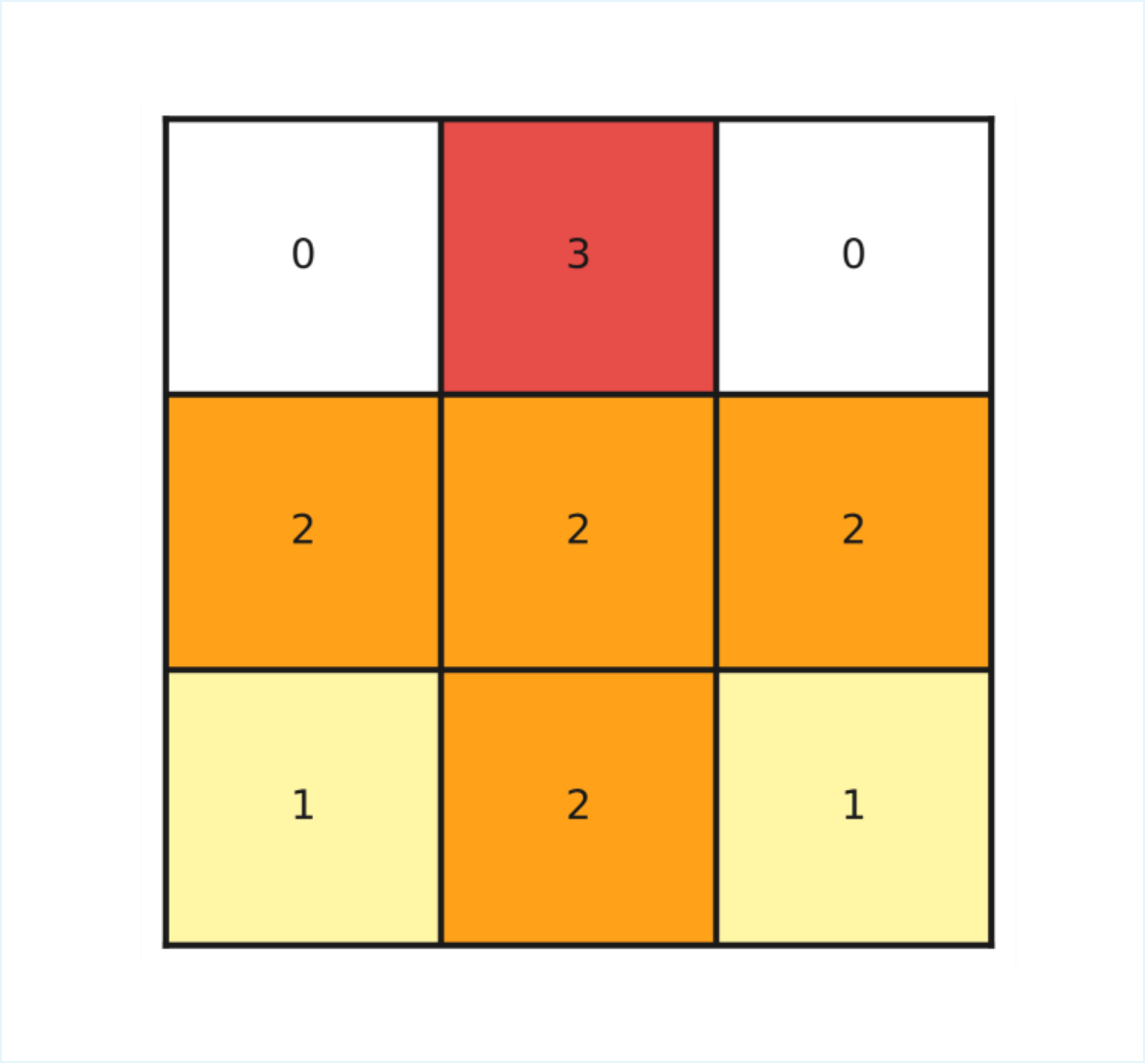}
        \caption[]%
        {{\small}}    
    \end{subfigure}
    \caption{Directed graph constructed in Algorithm 1 applied to generator $\tilde{A}_R^{3,2}$ in configuration $\gamma_i \eta$ (left) and sandpile configuration $(\gamma_i \eta)^{\tilde{A}_R^{3,2}}$ (right).} 
    \label{Figs_R_small}
\end{figure}

    Observe that $w_{\tilde{A}_R^{3, 2}}(\gamma_i \eta) = 8$ and $\{v \in \tilde{A}_R^{3, 2}| (\gamma_i \eta)^{\tilde{A}_R^{3, 2}}(v) = 3\} = \emptyset$. Thus, by expression (\ref{exp_av_size_rec_eq}), we have
    \begin{equation*}
        \mathbb{E}[X(\gamma_i \eta)|Y \in \tilde{A}^{3,2}_R] = 8,
    \end{equation*}
    which agrees with expression (\ref{Exp_av_size_R_2}). 

    Now, consider $N = 4$ and $k = 2$. The evolution of Algorithm \ref{Avalanche_analysis} for this case is depicted in Figure \ref{Figs_R_small_even}.

  \begin{figure}[H]
    \centering
    \captionsetup[subfigure]{justification=centering, labelformat=empty, singlelinecheck=false, width=0.6\linewidth}

    \begin{subfigure}[b]{0.32\textwidth}
        \centering
        \includegraphics[width=0.8\textwidth]{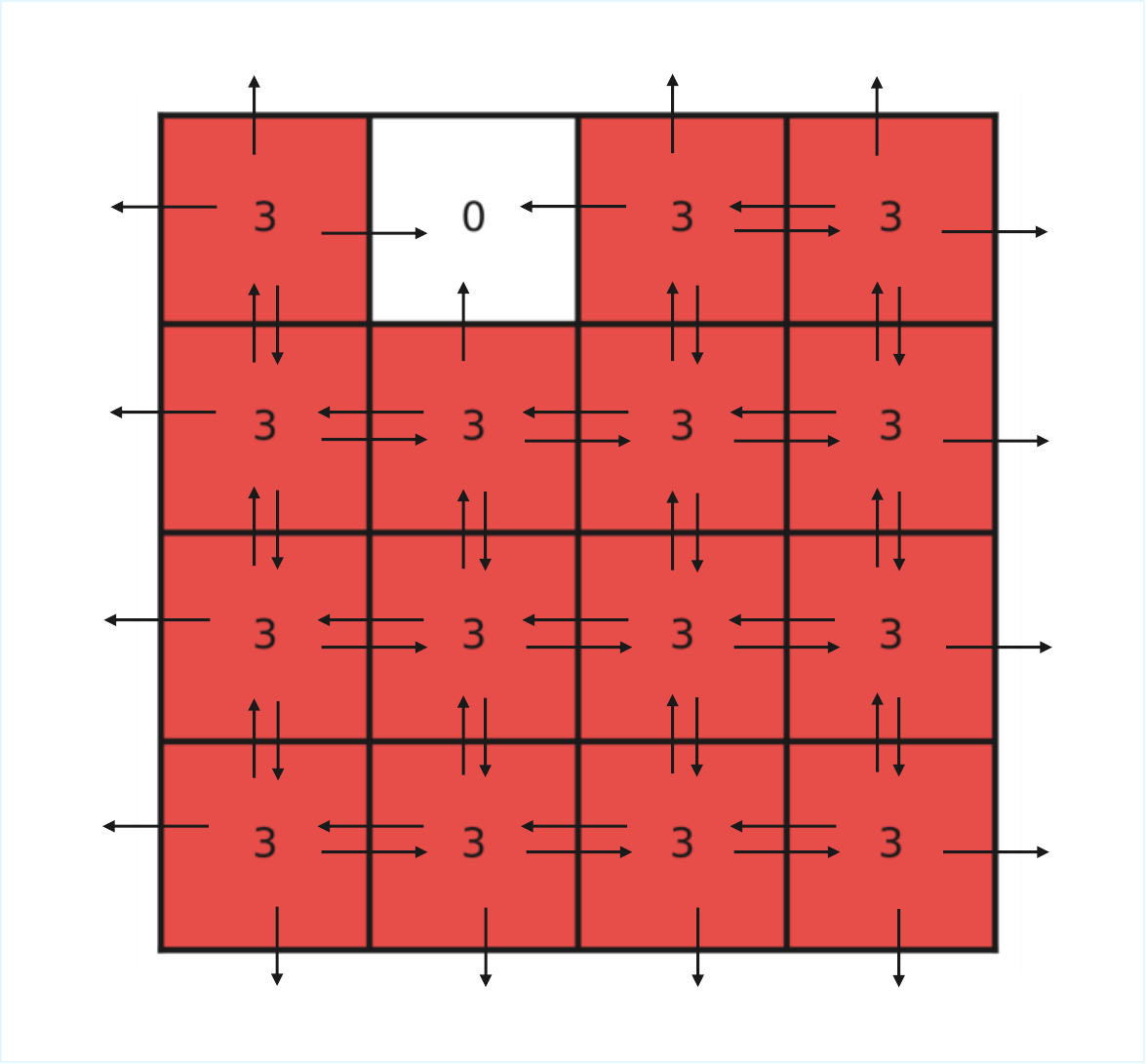}
        \caption[]%
        {{}}    
    \end{subfigure}
    \begin{subfigure}[b]{0.32\textwidth}  
        \centering 
        \includegraphics[width=0.8\textwidth]{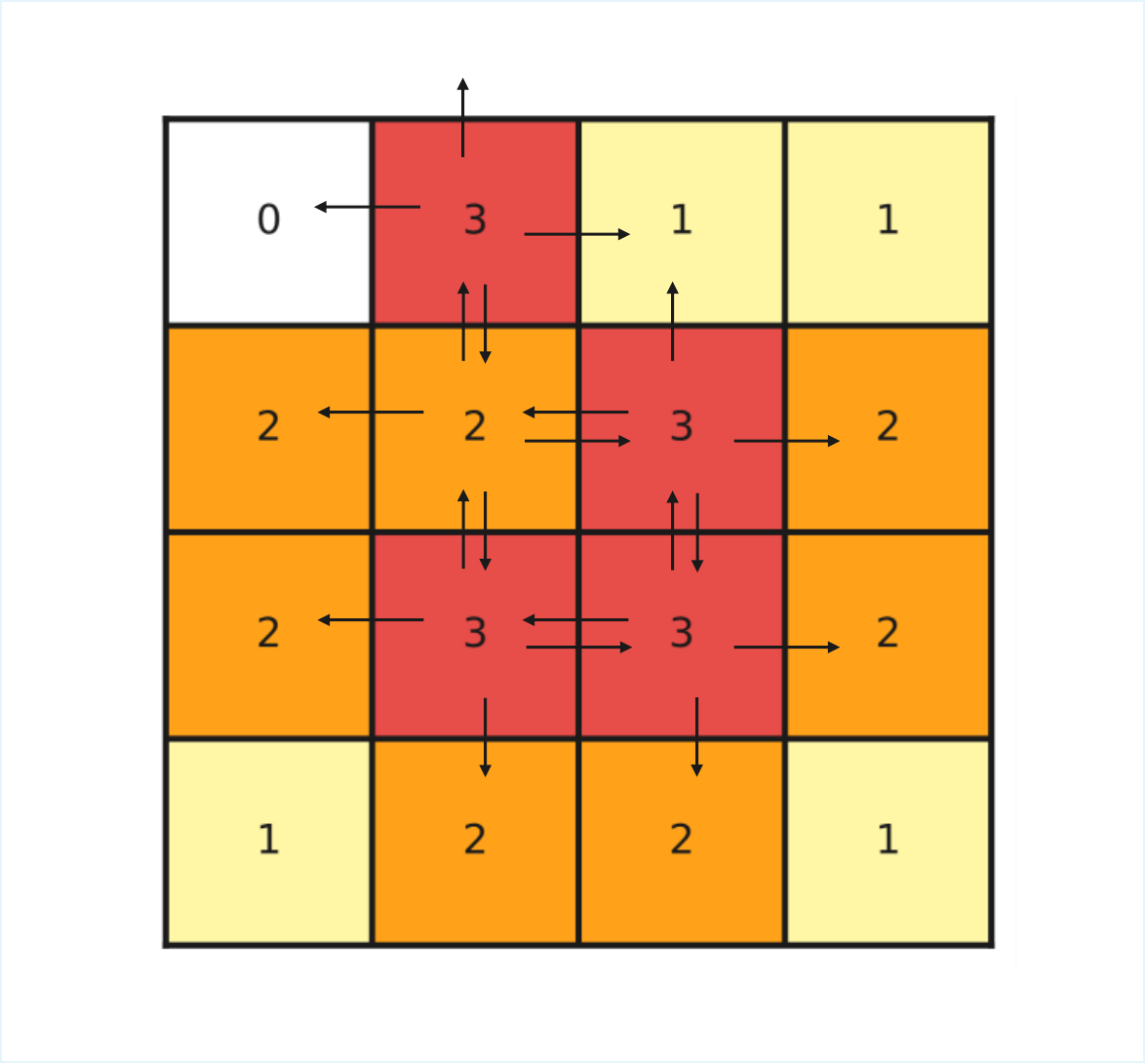}
        \caption[]%
        {{}}    
    \end{subfigure}
    \begin{subfigure}[b]{0.32\textwidth}  
        \centering 
        \includegraphics[width=0.8\textwidth]{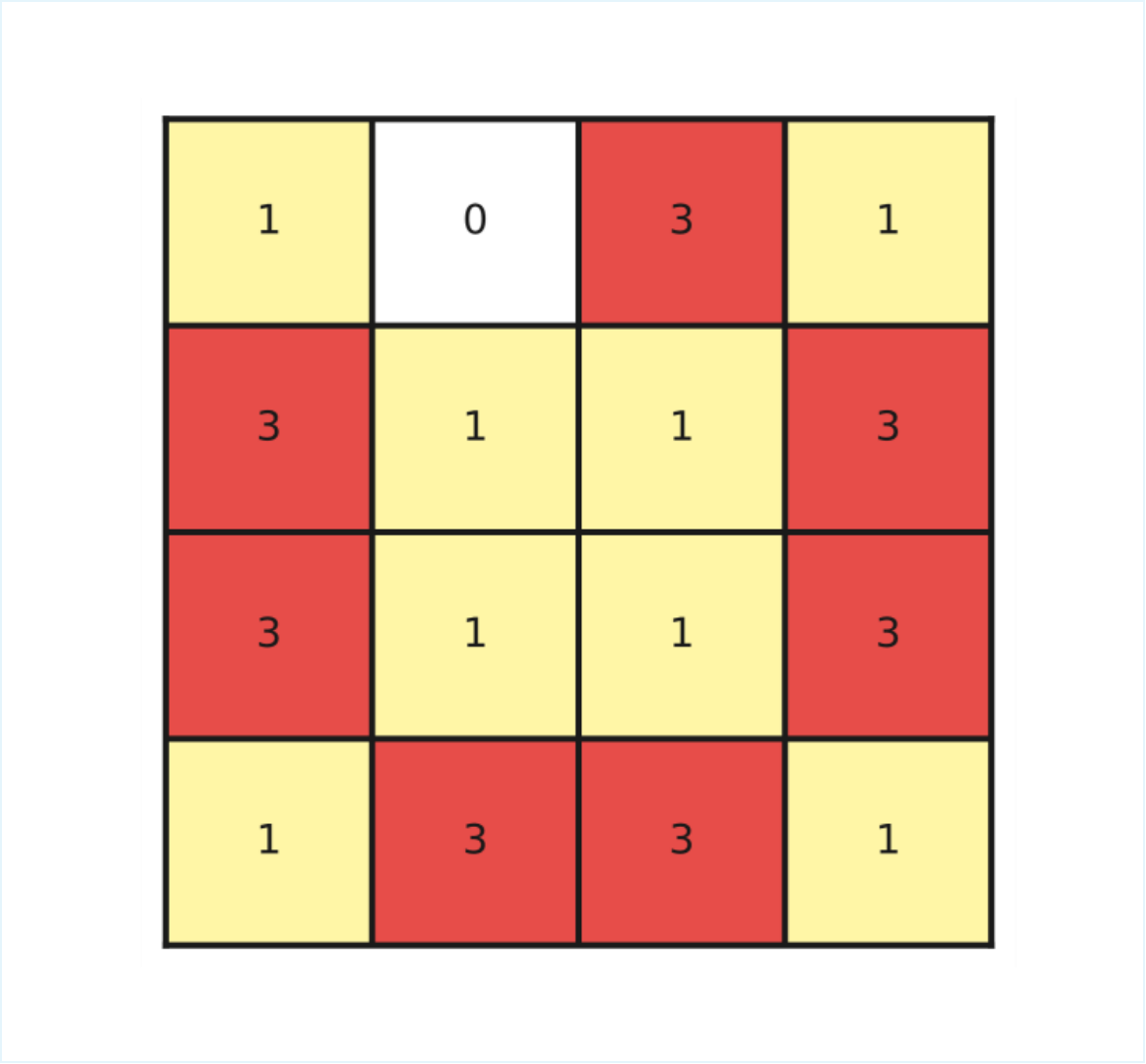}
        \caption[]%
        {{}}    
    \end{subfigure}
    \caption[ ]
    {\small Evolution of Algorithm \ref{Avalanche_analysis} applied to generator $\tilde{A}_R^{4,2}$ in sandpile configuration $\gamma_i \eta$. }
    \label{Figs_R_small_even}
\end{figure}

    Let $v_{i_1}$, $v_{i_2}$ and $v_{i_3}$ denote the neighbours of $v_i$ that are part of the corners, the inner boundary and the interior of $A^{(4)}$ respectively. We initialize a directed graph $(V, E)$ with $E = \emptyset$. For each $v \in \tilde{A}_R^{4,2}$, we now add edges to $E$ from $v$ to each of its neighbours. Now, observe that $(\gamma_i \eta)(v) + \text{indeg}(v) - \text{outdeg}(v) < 4$ for all $v \notin \tilde{A}_R^{4,2}$. Also, we obtain 
    \begin{equation*}
        (\gamma_i \eta)^{\tilde{A}_R^{4,2}}(v) = \begin{cases}
            0, &\text{if } v = v_{i_1}, \\
            1, &\text{if } v \in (C(A^{(4)}) \setminus \{v_{i_1}\}) \cup \{v_{i_2}\}, \\
            2, &\text{if } v \in (\delta^{\text{in}}(A^{(4)}) \setminus \{v_i, v_{i_2}\}) \cup \{v_{i_3}\}, \\
            3, &\text{otherwise.}
        \end{cases}
    \end{equation*}
    Hence, we have $w_{\tilde{A}_R^{4,2}} = 15$ and $\tilde{A}_{R,1}^{4,2} := \{v \in \tilde{A}_R^{4,2}|(\gamma_i \eta)^{\tilde{A}_R^{4,2}}(v) = 3\} = \text{int}(A^{(4)}) \setminus \{v_{i_3}\}$. Applying Algorithm \ref{Avalanche_analysis} to the generator $\tilde{A}_{R,1}^{4,2}$ in the sandpile configuration $(\gamma_i \eta)^{\tilde{A}_R^{4,2}}$ now yields $w_{\tilde{A}_{R,1}^{4,2}}((\gamma_i \eta)^{\tilde{A}_R^{4,2}}) = 5$ and $\{v \in \tilde{A}_{R,1}^{4,2}|((\gamma_i \eta)^{\tilde{A}_R^{4,2}})^{\tilde{A}_{R,1}^{4,2}}(v) = 3\} = \emptyset$. Thus, we obtain
    \begin{equation*}
        \mathbb{E}[X(\gamma_i \eta)|Y \in \tilde{A}^{4,2}_R] = 16,
    \end{equation*}
    which is consistent with expression (\ref{Exp_av_size_R_2}).

    Suppose now that expression (\ref{Exp_av_size_R_2}) holds for $N = n-2$ and all $k = 2, \ldots, \lceil (n-2)/2 \rceil$ for some $n = 5, 6, \ldots$. We show that this implies its validity for $N = n$ and all $k = 2, \ldots, \lceil n/2 \rceil$ by means of an embedded induction argument. We first consider the case $k = \lceil n/2 \rceil$. The evolution of Algorithm \ref{Avalanche_analysis} for $N = 7$ and $k = 4$ is depicted in Figure \ref{Figs_R}.

    \begin{figure}[H]
    \centering
    \captionsetup[subfigure]{justification=centering, labelformat=empty, singlelinecheck=false, width=0.6\linewidth}

    \begin{subfigure}[b]{0.32\textwidth}
        \centering
        \includegraphics[width=\textwidth]{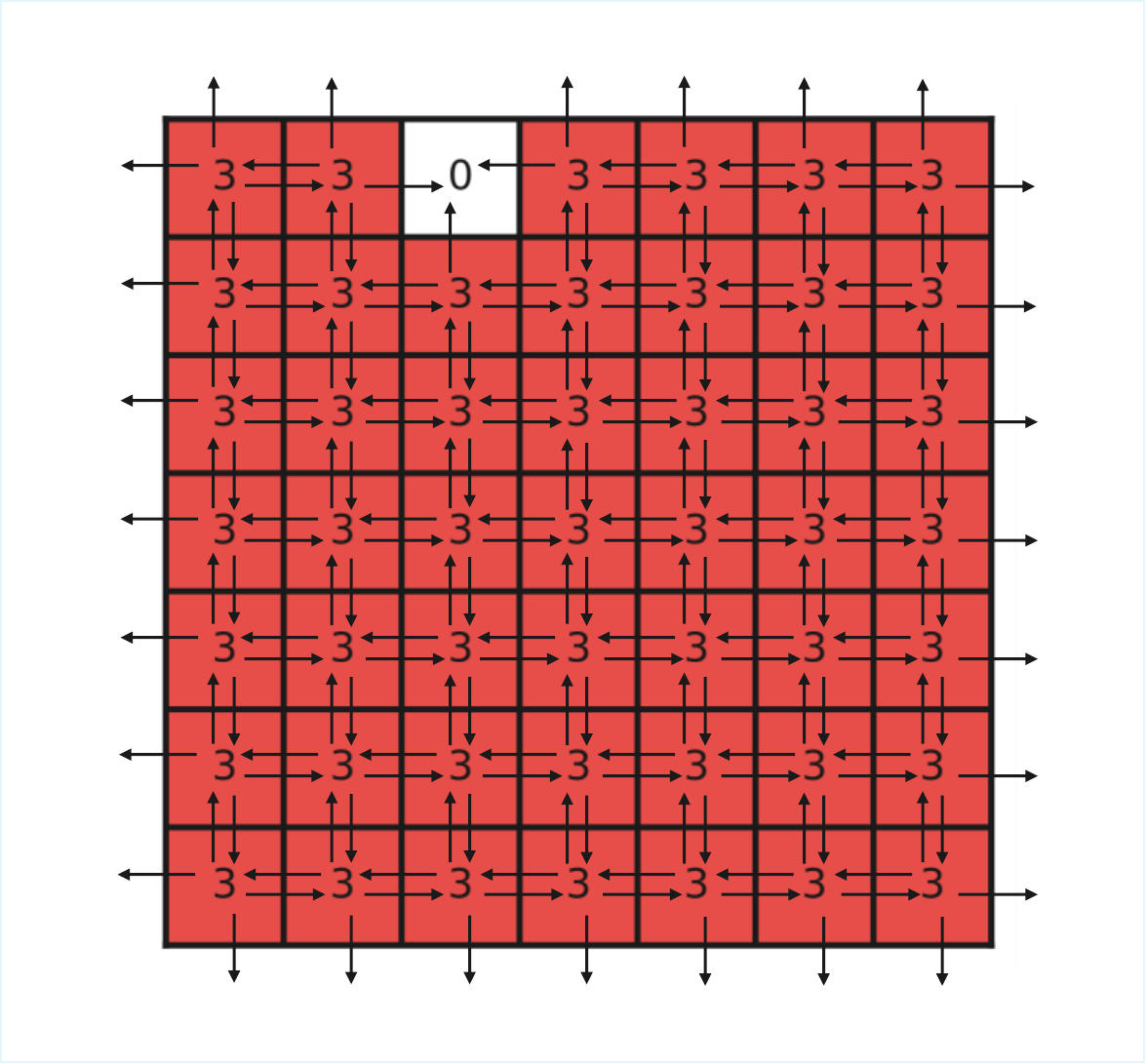}
        \caption[]%
        {{}}    
        \label{R}
    \end{subfigure}
    \begin{subfigure}[b]{0.32\textwidth}  
        \centering 
        \includegraphics[width=\textwidth]{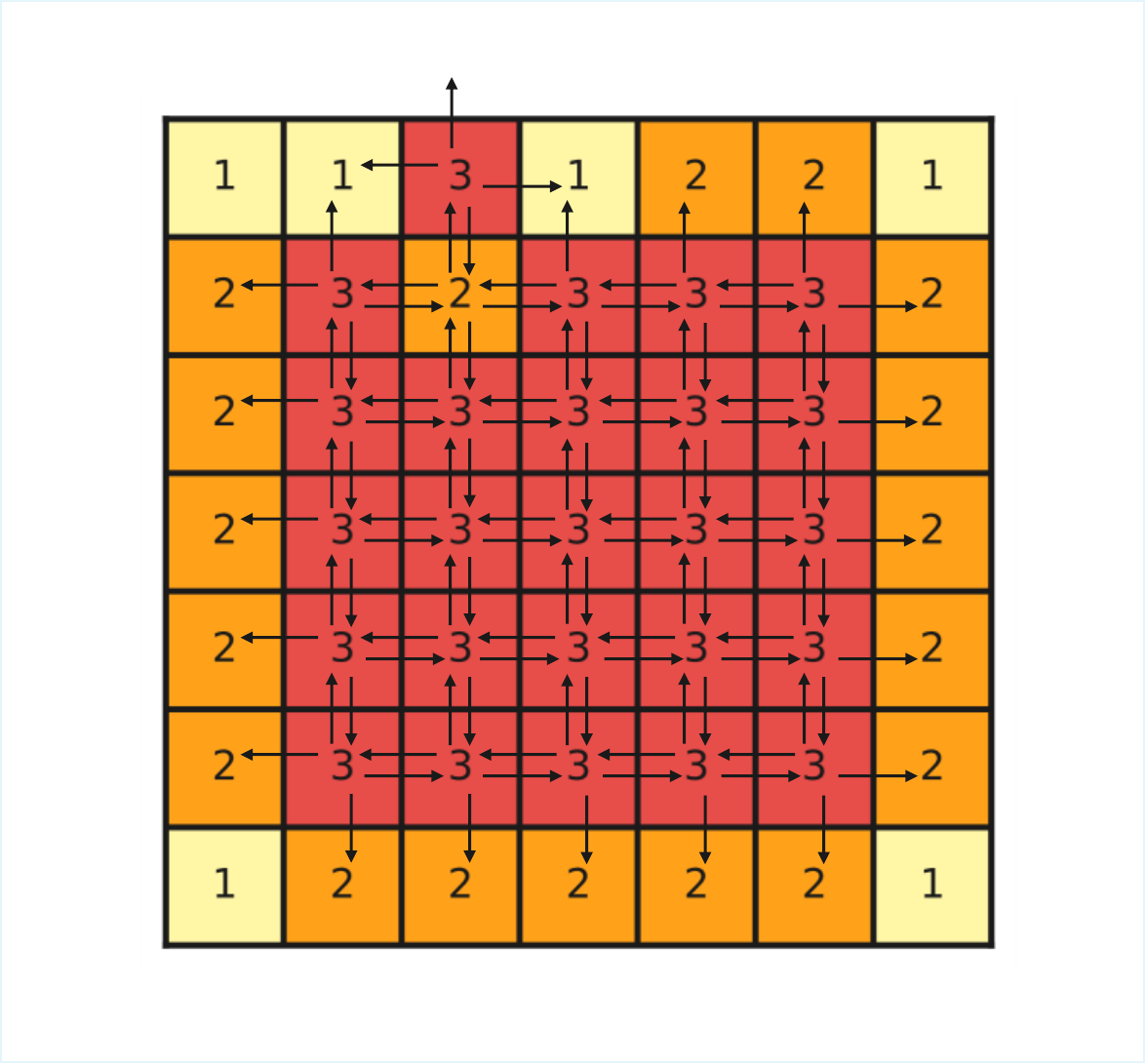}
        \caption[]%
        {{}}    
        \label{R_intermediate}
    \end{subfigure}
    \begin{subfigure}[b]{0.32\textwidth}  
        \centering 
        \includegraphics[width=\textwidth]{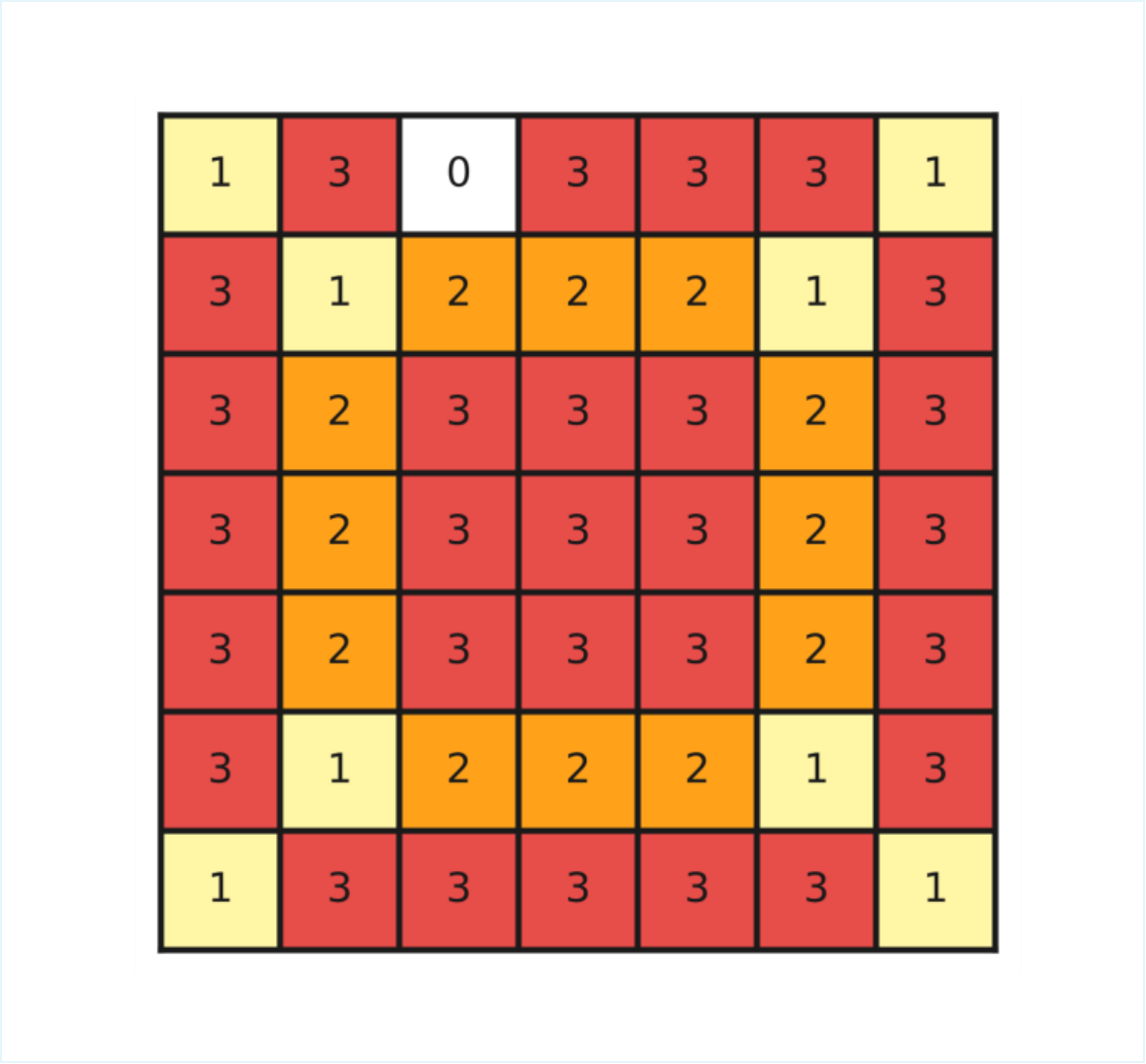}
        \caption[]%
        {{}}    
        \label{R_after}
    \end{subfigure}
    \caption[ ]
    {\small Evolution of Algorithm \ref{Avalanche_analysis} applied to generator $\tilde{A}^{7,4}$ in sandpile configuration $\gamma_i \eta$.} 
    \label{Figs_R}
\end{figure}
    Let $v_{i_1}$ and $v_{i_2}$ denote the neighbours of $v_i$ that are part of $\delta^{\text{in}}(A^{(N)}) \cup C(A^{(N)})$ and let $v_{i_3}$ denote the neighbour of $v_i$ that is part of $\text{int}(A^{(N)})$. Although Figure \ref{Figs_R} shows a case in which both $v_{i_1}$ and $v_{i_2}$ are part of $\delta^{\text{in}}(A^{(N)})$, observe that either $v_{i_1}$ or $v_{i_2}$ may be part of $C(A^{(N)})$. Following Algorithm \ref{Avalanche_analysis}, we start by initializing the directed graph $(V, E)$ with $E = \emptyset$ and add edges to $E$ from $v$ to each of its neighbours for each $v \in \tilde{A}_R^{n,\lceil n/2 \rceil}$. At this point, note that $(\gamma_i \eta)(v) + \text{indeg}(v) - \text{outdeg}(v) < 4$ for all $v \in V$. The obtained directed graph for the case $N = 7$, $k = 4$ is provided in Figure \ref{Figs_R} (left). Hence, we obtain $(\gamma_i \eta)^{\tilde{A}_R^{n,\lceil n/2 \rceil}}(v) = 3$ for all $v \in \{v_i\} \cup (\text{int}(A^{(N)}) \setminus \{v_{i_3}\})$ and $(\gamma_i \eta)^{\tilde{A}_R^{n,\lceil n/2 \rceil}}(v_{i_3}) = 2$. In addition, $(\gamma_i \eta)^{\tilde{A}_R^{n,\lceil n/2 \rceil}}(v) < 3$ for all $v \in (\delta^{\text{in}}(A^{(N)}) \cup C(A^{(N)})) \setminus \{v_i\}$. This implies that $w_{\tilde{A}_R^{n,\lceil n/2 \rceil}}(\gamma_i \eta) = n^2-1$ and the set $\tilde{A}^{n,\lceil n/2 \rceil}_{R,1} := \{v \in \tilde{A}_R^{n,\lceil n/2 \rceil}| (\gamma_i \eta)^{\tilde{A}_R^{n,\lceil n/2 \rceil}}(v) = 3\} = \text{int}(A^{(N)}) \setminus \{v_{i_3}\}$.

    We now apply Algorithm \ref{Avalanche_analysis} to the generator $\tilde{A}^{n,\lceil n/2 \rceil}_{R,1}$ in the sandpile configuration $(\gamma_i \eta)^{\tilde{A}_R^{n,\lceil n/2 \rceil}}$. Again, we start by initializing the directed graph $(V, E)$ with $E = \emptyset$ and add edges to $E$ from $v$ to each of its neighbours for each $v \in~\tilde{A}^{n,\lceil n/2 \rceil}_{R,1}$. At this point, note that the only vertex $v \notin \tilde{A}^{n,\lceil n/2 \rceil}_{R,1}$ that satisfies $(\gamma_i \eta)^{\tilde{A}_R^{n,\lceil n/2 \rceil}}(v) + \text{indeg}(v) - \text{outdeg}(v) \geq 4$ is $v = v_{i_3}$. Hence, we add edges to $E$ from $v_{i_3}$ to each of its neighbours. Now, the only vertex $v \notin \tilde{A}^{n,\lceil n/2 \rceil}_{R,1}$ for which $(\gamma_i \eta)^{\tilde{A}_R^{n,\lceil n/2 \rceil}}(v) + \text{indeg}(v) - \text{outdeg}(v) \geq 4$ is $v = v_i$. After adding edges from $v_i$ to each of its neigbours, step 4 of Algorithm \ref{Avalanche_analysis} terminates. The directed graph resulting from this iteration is illustrated in Figure \ref{Figs_R} (middle). Note that $w_{\tilde{A}^{n,\lceil n/2 \rceil}_{R,1}}((\gamma_i \eta)^{\tilde{A}_R^{n,\lceil n/2 \rceil}}) = (n-2)^2 +1$ and the set $\{v \in~\tilde{A}^{n,\lceil n/2 \rceil}_{R,1}|((\gamma_i \eta)^{\tilde{A}_R^{n,\lceil n/2 \rceil}})^{\tilde{A}^{n,\lceil n/2 \rceil}_{R,1}}(v) =~3\}$ is exactly a square of size $(n-4) \times (n-4)$, which we will denote by $A^{(n-4)}$. Evaluating expression (\ref{exp_av_size_rec_eq}) and using expression (\ref{exp_av_size_square}) now yields
    \begin{align*}
        &\mathbb{E}[X(\gamma_i \eta)|Y \in \tilde{A}^{n \lceil n/2 \rceil}_R] = n^2 -1 + \dfrac{(n-2)^2-1}{n^2-1} \mathbb{E}[X((\gamma_i \eta)^{\tilde{A}_R^{n, \lceil n/2 \rceil}})|Y \in \tilde{A}^{n,\lceil n/2 \rceil}_{R,1}] \\
        &= n^2 - 1 + \dfrac{(n-2)^2-1}{n^2-1}\left[(n-2)^2+1 + \dfrac{(n-4)^2}{(n-2)^2-1} \dfrac{3(n-4)^4+15(n-4)^3 + 20(n-4)^2-8}{30(n-4)}\right]\\
        &= \dfrac{n(3n^4+15n^3+20n^2-60n-8)}{30(n^2-1)},
    \end{align*}
    which is consistent with expression (\ref{Exp_av_size_R_2}) for $k = \lceil n/2 \rceil$. 

    Within the embedded induction argument, we now assume that expression (\ref{Exp_av_size_R_2}) holds for $N = n$ and $k = \ell + 1$ for some $\ell = 2, \ldots, \lceil N/2 \rceil -1$. We proceed to show that the statement holds for $k = \ell$. The directed graph resulting from Algorithm \ref{Avalanche_analysis} and the sandpile configuration $(\gamma_i \eta)^{\tilde{A}_R^{n, k}}$ for the case $n = 7$ and $k = 3$ are provided in Figure \ref{Figs_Rcase2}.

     \begin{figure}[H]
    \centering
    \captionsetup[subfigure]{justification=centering, labelformat=empty, singlelinecheck=false, width=0.6\linewidth}

    \begin{subfigure}[b]{0.45\textwidth}
        \centering
        \includegraphics[width=0.7\textwidth]{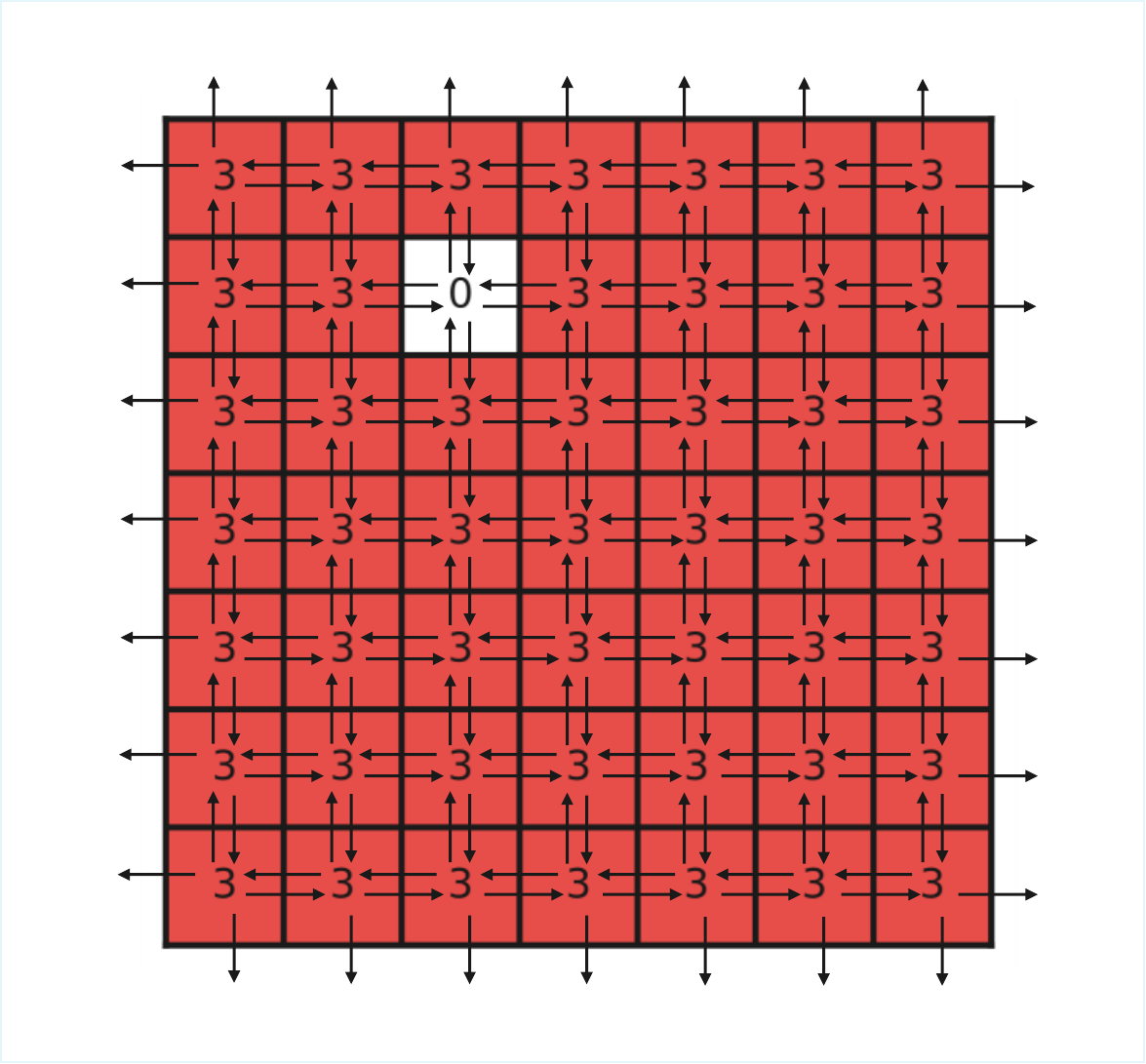}
        \caption[]%
        {{\small }}    
        \label{R_case2}
    \end{subfigure}
    \begin{subfigure}[b]{0.45\textwidth}  
        \centering 
        \includegraphics[width=0.7\textwidth]{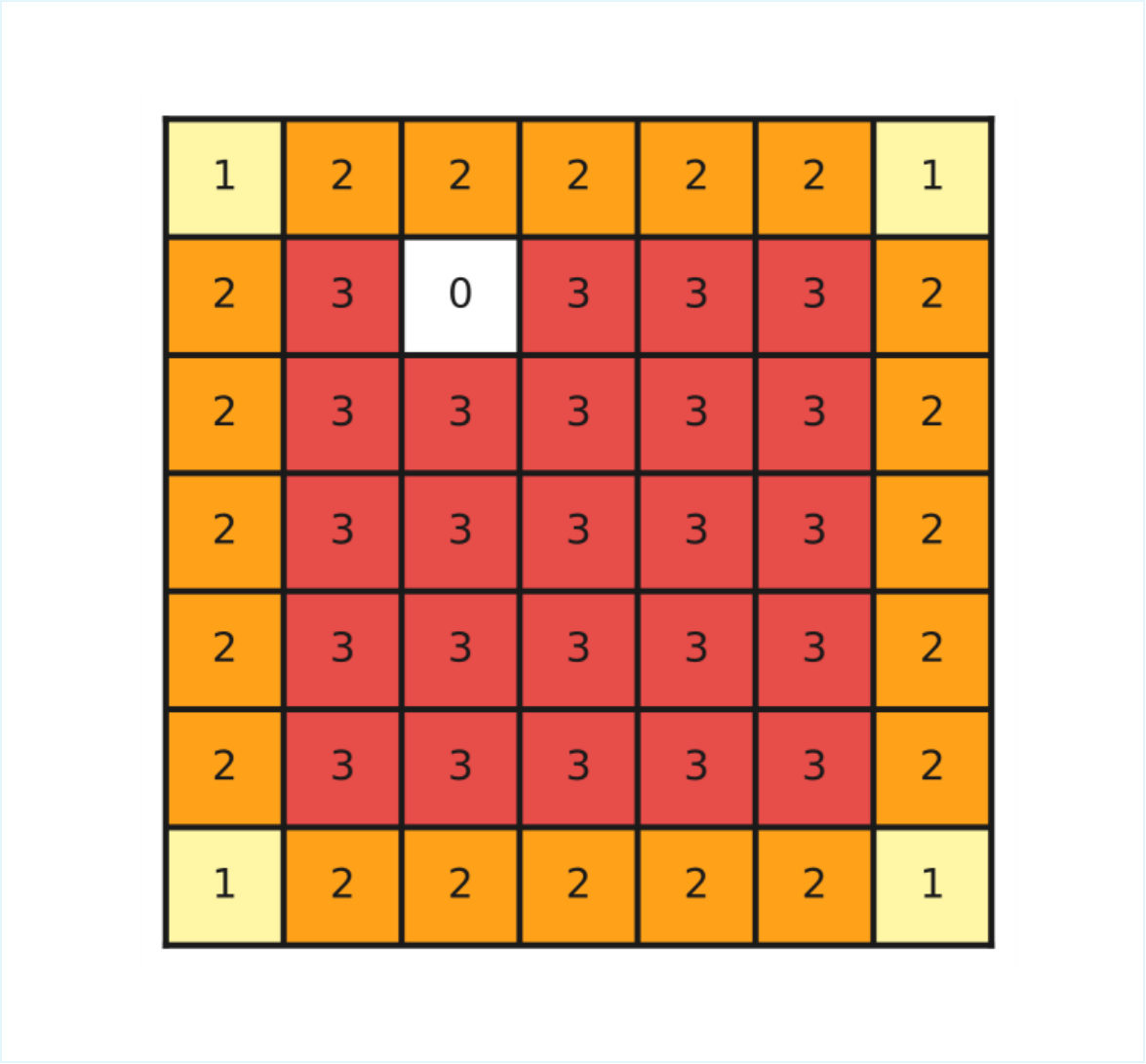}
        \caption[]%
        {{\small}}    
        \label{R_case2_after}
    \end{subfigure}
    \caption{Evolution of Algorithm \ref{Avalanche_analysis} applied to generator $\tilde{A}_R^{7,3}$ in sandpile configuration $\gamma_i \eta$.} 
    \label{Figs_Rcase2}
    \end{figure}
    Let $v_{i_1}$ and $v_{i_3}$ denote the neighbours of $v_i$ that are part of $R_{\ell}(A^{(N)})$, let $v_{i_2}$ denote the neighbour of $v_i$ that is part of $R_{\ell + 1}(A^{(N)})$ and let $v_{i_4}$ denote the neighbour of $v_i$ that is part of $R_{\ell - 1}(A^{(N)})$. Again, we start by initializing the directed graph $(V, E)$ with $E = \emptyset$. For each $v \in \tilde{A}_R^{n,\ell}$, we add edges to $E$ from $v$ to each of its neighbours. Note that at this point, we have $(\gamma_i \eta)(v) + \text{indeg}(v) - \text{outdeg}(v) = 4$ if and only if $v = v_i$. Hence, we continue to add edges to $E$ from $v_i$ to each of its neighbours. Now, we obtain $(\gamma_i \eta)(v) + \text{indeg}(v) - \text{outdeg}(v) < 4$ for all $v \in V$. The directed graph resulting from Algorithm \ref{Avalanche_analysis} is depicted in Figure \ref{Figs_Rcase2} (left). Observe now that
    \begin{equation*}
        (\gamma_i \eta)^{\tilde{A}_R^{n, \ell}}(v) = \begin{cases}
            0, &\text{if } v = v_i, \\
            1, &\text{if } v \in C(A^{(N)}), \\
            2, &\text{if } v \in R_{\lceil n/2 \rceil}(A^{(N)}) \setminus C(A^{(N)}), \\
            3, &\text{otherwise}.
        \end{cases}
    \end{equation*}
    It follows that $w_{\tilde{A}_R^{n,\ell}}(\gamma_i \eta) = n^2$ and that the set $\tilde{A}^{n,\ell}_{R,1} := \{v \in \tilde{A}_R^{n,\ell}| (\gamma_i \eta)^{\tilde{A}_R^{n,\ell}}(v) = 3\} = \text{int}(A^{(N)}) \setminus \{v_i\}$. We now use the induction hypothesis to evaluate expression (\ref{exp_av_size_rec_eq}):
    \begin{align*}
        &\mathbb{E}[X(\gamma_i \eta)|Y \in \tilde{A}^{n, \ell}_R] = n^2 + \dfrac{(n-2)^2-1}{n^2-1} \mathbb{E}[X((\gamma_i \eta)^{\tilde{A}_R^{n,\ell}})|Y \in \tilde{A}^{n, \ell}_{R,1}] \\
        &= n^2 + \dfrac{(n-2)^2-1}{n^2-1} \dfrac{3(n-2)^5 + 15(n-2)^4 + 15(n-2)^3 - 15(n-2)^2-18(n-2) + 10(4\ell^3-24\ell^2+23\ell-6)}{30((n-2)^2-1)} \\
        &= \dfrac{3n^5+15n^4 + 15n^3-15n^2-18n+10(4\ell^3-24\ell^2+23\ell-6)}{30(n^2-1)}
    \end{align*}
    if $n$ is odd and 
    \begin{align*}
        &\mathbb{E}[X(\gamma_i \eta)|Y \in \tilde{A}^{n, \ell}_R] = n^2 + \dfrac{(n-2)^2-1}{n^2-1} \mathbb{E}[X((\gamma_i \eta)^{\tilde{A}_R^{n, \ell}}|Y \in \tilde{A}^{n,\ell}_{R,1}] \\
        &= n^2 + \dfrac{(n-2)^2-1}{n^2-1} \dfrac{3(n-2)^5 + 15(n-2)^4+15(n-2)^3-15(n-2)^2-18(n-2)+20 \ell(2\ell^2 - 9\ell + 1)}{30((n-2)^2-1)} \\
        &= \dfrac{3n^5 + 15n^4 + 15n^3 - 15n^2 - 18n + 20\ell(2\ell^2-9\ell+1)}{30(n^2-1)}
    \end{align*}
    if $n$ is even. Thus, we established the validity of expression (\ref{Exp_av_size_R_2}) for $N = n$ and all $k = 2, \ldots, \lceil n/2 \rceil$. The complete induction argument now yields the statement for all $N \geq 3$ and all $k = 2, \ldots, \lceil n/2 \rceil$.

\subsection*{Proof of Lemma \ref{lem_cornerstones_C}}

Let $\tilde{A}_C^{N, k}$ denote the remainder of $A^{(N)}$ in the sandpile configuration $\gamma_i \eta$, i.e., $\tilde{A}_C^{N,k} = A^{(N)} \setminus \{v_i\}$. Note that
 \begin{equation*}
        \mathbb{E}[X(\gamma_i \eta)|Y \in A^{(N)}] = \dfrac{N^2-1}{N^2} \mathbb{E}[X(\gamma_i \eta)|Y \in \tilde{A}^{N,k}_C].
    \end{equation*}
Hence, it suffices to show that
\begin{equation}\label{Exp_av_size_C_2}
        \mathbb{E}[X(\gamma_i \eta)|Y \in \tilde{A}_C^{N,k}] = \begin{cases}
            \dfrac{3N^5 + 15N^4 + 15N^3-15N^2 - 18N+10(4k^3 - 24k^2+23k-3)}{30(N^2-1)}, &\text{if } N \text{ is odd,}\\[1.2em]
            \dfrac{3N^5 + 15N^4 + 15N^3 - 15N^2 - 18N + 10(4k^3 - 18k^2 + 2k + 3)}{30(N^2-1)}, &\text{if } N \text{ is even.}
        \end{cases}
    \end{equation}
    We prove the statement by induction over $N$. We first show that expression (\ref{Exp_av_size_C_2}) holds for $N = 3$ and $k = 2$ and for $N = 4$ and $k = 2$. Consider the case $N = 3$ and $k = 2$. The result of applying Algorithm \ref{Avalanche_analysis} to the generator $\tilde{A}_C^{3,2}$ in the sandpile configuration $\gamma_i \eta$ is depicted in Figure \ref{Figs_C_small}.

        \begin{figure}[H]
    \centering
    \captionsetup[subfigure]{justification=centering, labelformat=empty, singlelinecheck=false, width=0.5\linewidth}

    \begin{subfigure}[b]{0.3\textwidth}
        \centering
        \includegraphics[width=1.0\textwidth]{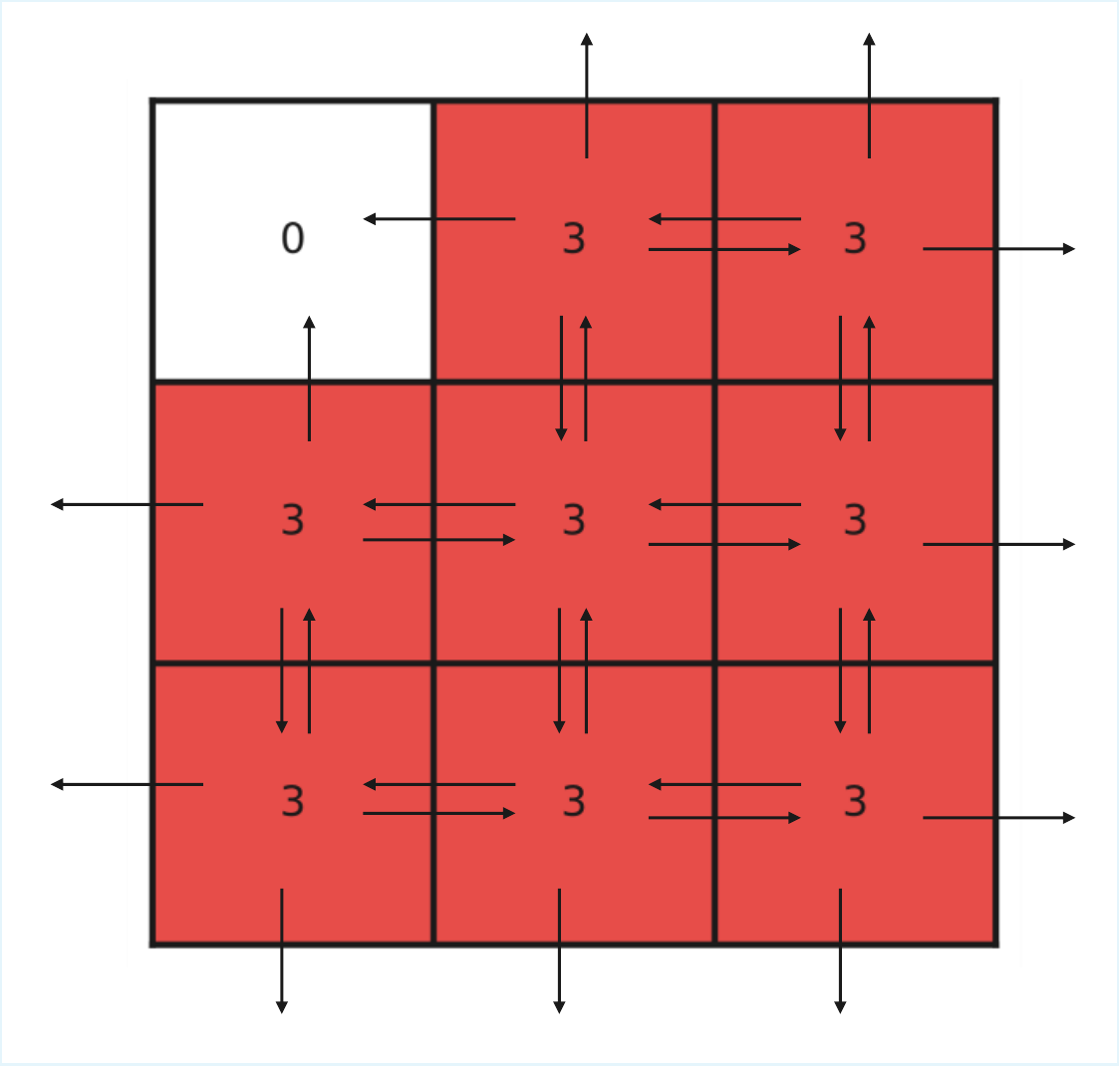}
        \caption[]%
        {{\small }}    
    \end{subfigure}
    \begin{subfigure}[b]{0.3\textwidth}  
        \centering 
        \includegraphics[width=1.0\textwidth]{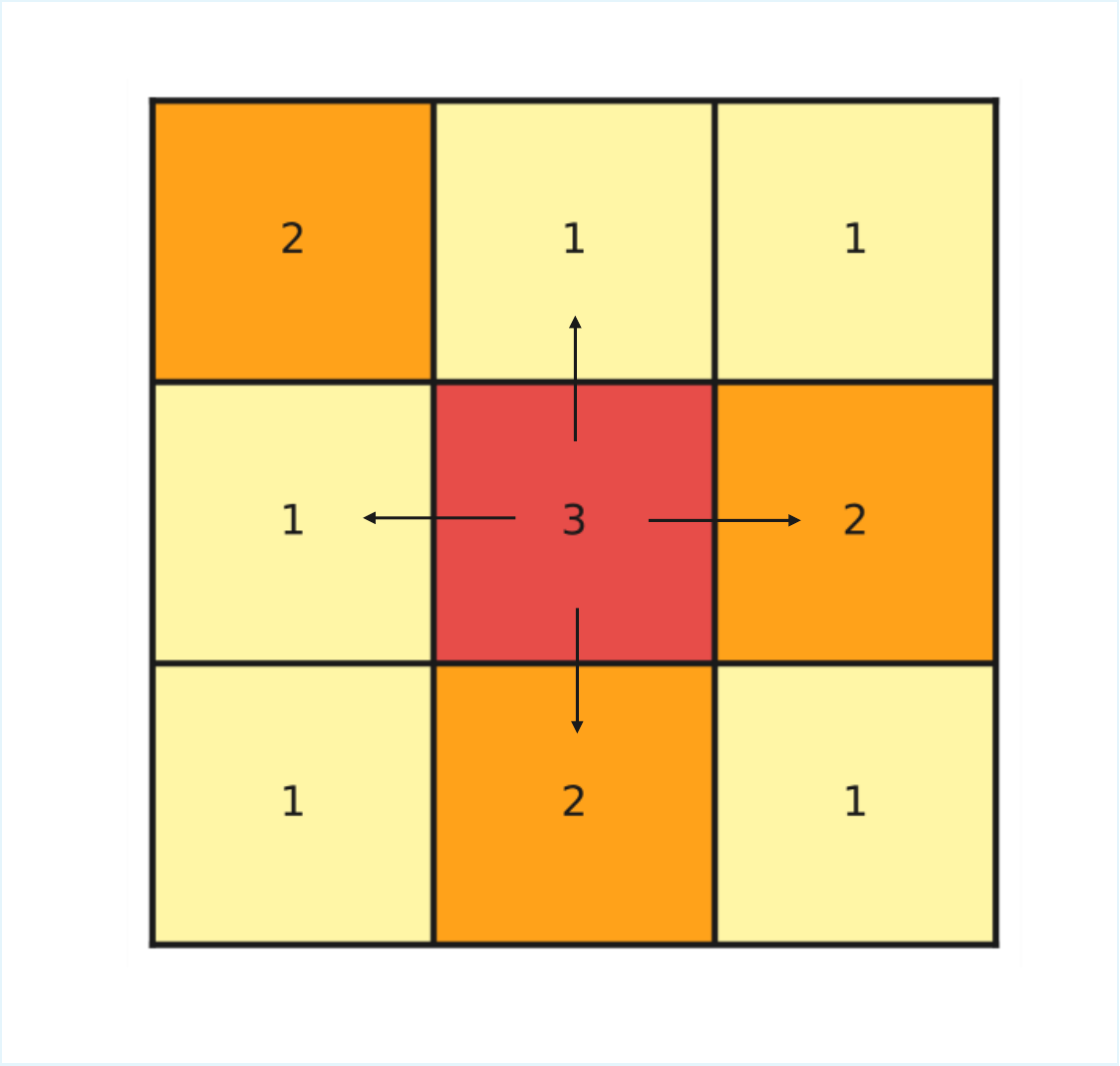}
        \caption[]%
        {{\small }}    
    \end{subfigure}
    \begin{subfigure}[b]{0.3\textwidth}  
        \centering 
        \includegraphics[width=1.0\textwidth]{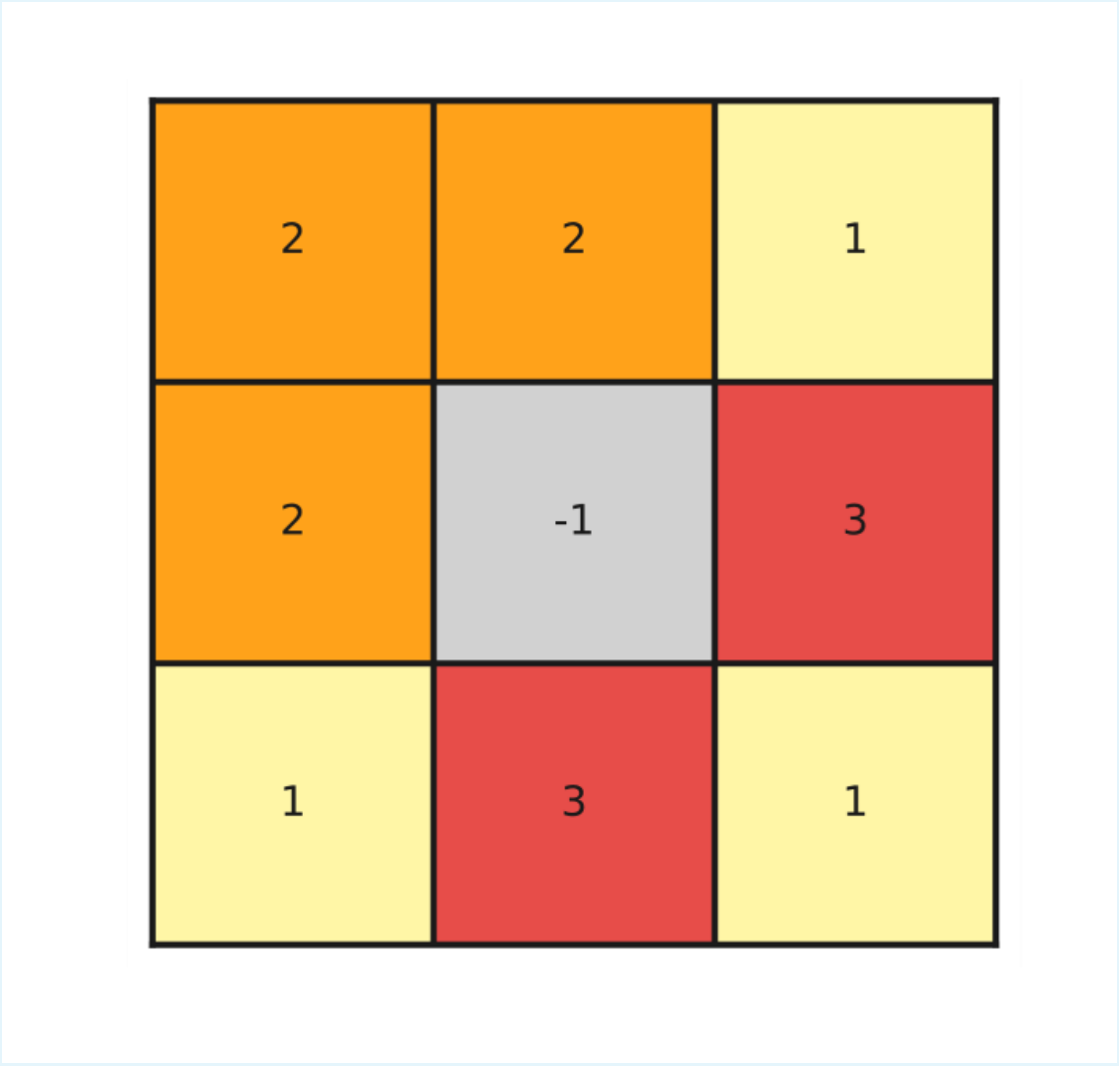}
        \caption[]%
        {{\small }}    
    \end{subfigure}
    \caption[ ]
    {\small Evolution of Algorithm \ref{Avalanche_analysis} applied to generator $\tilde{A}_C^{3, 2}$ in sandpile configuration $\gamma_i \eta$.} 
    \label{Figs_C_small}
\end{figure}
Let the vertex in the center of the generator $A^{(3)}$ be denoted by $\hat{v}$. Observe that $w_{\tilde{A}_C^{3,2}}(\gamma_i \eta) = 8$ and $\tilde{A}^{3,2}_{C,1} = \{v \in \tilde{A}_C^{3,2}|(\gamma_i \eta)^{\tilde{A}_C^{3,2}}(v) = 3\} = \{\hat{v}\}$. Also, note that $w_{\tilde{A}^{3,2}_{C,1}}((\gamma_i \eta)^{\tilde{A}_C^{3,2}}) = 1$ and $\{v \in \tilde{A}^{3,2}_{C,1} | ((\gamma_i \eta)^{\tilde{A}_C^{3,2}})^{\tilde{A}^{3,2}_{C,1}}(v) = 3\} = \emptyset$. Using expression (\ref{exp_av_size_rec_eq}), we obtain
\begin{equation*}
    \mathbb{E}[X(\gamma_i \eta)|Y \in \tilde{A}_C^{3,2}] = \dfrac{65}{8},
\end{equation*}
which is consistent with expression (\ref{Exp_av_size_C_2}).

Now, consider the case $N = 4$ and $k = 2$. The evolution of Algorithm \ref{Avalanche_analysis} is shown in Figure \ref{Figs_C_small_even}.

    \begin{figure}[H]
    \centering
    \captionsetup[subfigure]{justification=centering, labelformat=empty, singlelinecheck=false, width=0.5\linewidth}

    \begin{subfigure}[b]{0.3\textwidth}
        \centering
        \includegraphics[width=\textwidth]{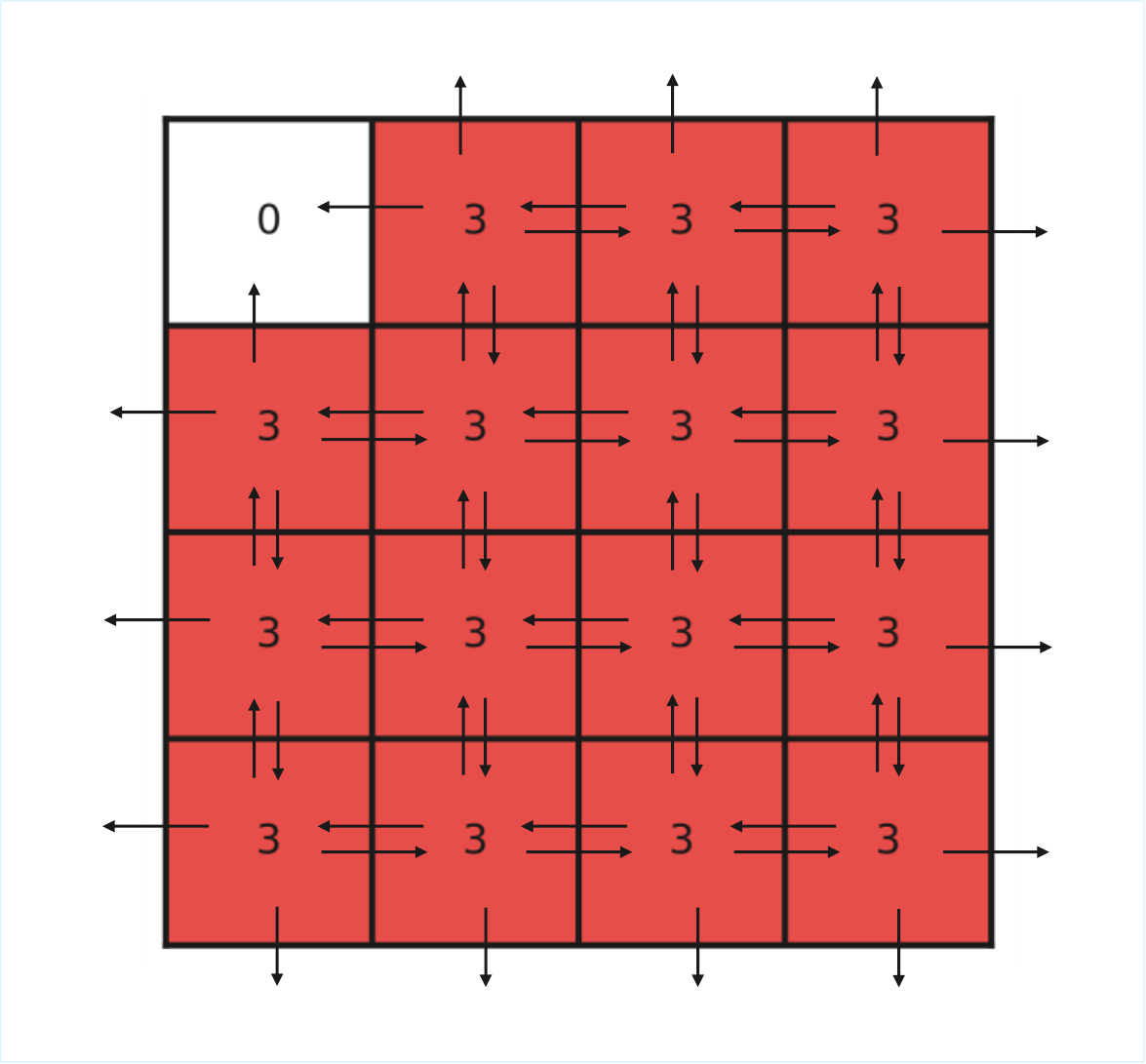}
        \caption[]%
        {{\small }}    
    \end{subfigure}
    \begin{subfigure}[b]{0.3\textwidth}  
        \centering 
        \includegraphics[width=\textwidth]{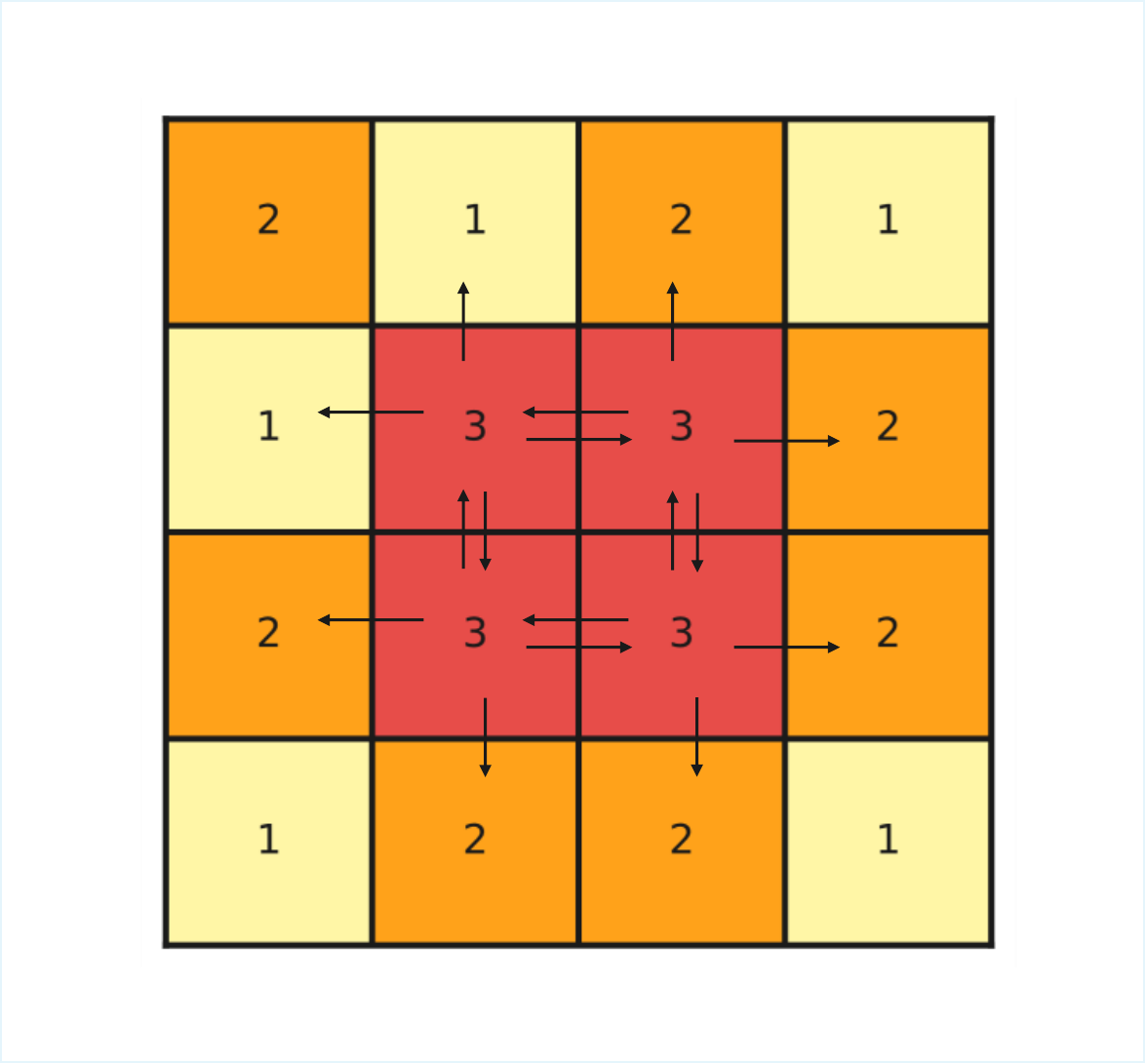}
        \caption[]%
        {{\small }}    
    \end{subfigure}
    \begin{subfigure}[b]{0.3\textwidth}  
        \centering 
        \includegraphics[width=\textwidth]{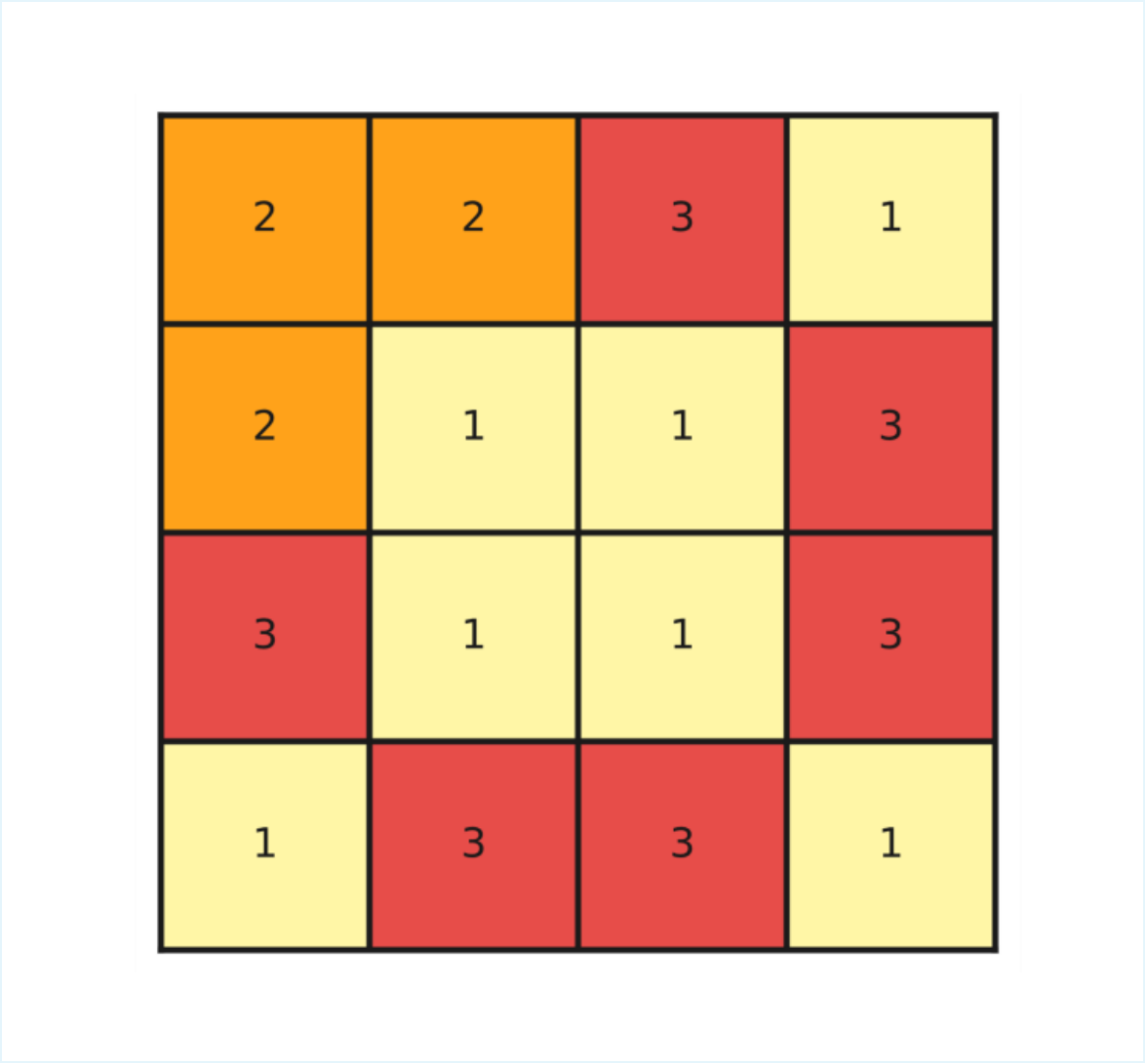}
        \caption[]%
        {{\small }}    
    \end{subfigure}
    \caption[ ]
    {\small Evolution of Algorithm \ref{Avalanche_analysis} applied to generator $\tilde{A}_C^{4, 2}$ in sandpile configuration $\gamma_i \eta$.} 
    \label{Figs_C_small_even}
\end{figure}
It follows that $w_{\tilde{A}_C^{4,2}}(\gamma_i \eta) = 15$ and $\tilde{A}_{C,1}^{4,2} = R_1(A^{(4)})$. From the second iteration of Algorithm \ref{Avalanche_analysis}, we obtain $w_{\tilde{A}_{C,1}^{4,2}}((\gamma_i \eta)^{\tilde{A}_C^{4,2}}) = 4$ and $\{v \in \tilde{A}^{4,2}_{C,1}|((\gamma_i \eta)^{\tilde{A}_C^{4,2}})^{\tilde{A}_{C,1}^{4,2}}(v) = 3\} = \emptyset$. Inserting this into expression (\ref{exp_av_size_rec_eq}) now yields
\begin{equation*}
    \mathbb{E}[X(\gamma_i \eta)|Y \in \tilde{A}_C^{4,2}] = \dfrac{241}{15},
\end{equation*}
which aligns with expression (\ref{Exp_av_size_C_2}).

We proceed to assume that expression (\ref{Exp_av_size_C_2}) is valid for $N = n-2$ and all $k = 2, \ldots, \lceil (n-2)/2\rceil$ for some $n = 5, 6, \ldots$. We show that this implies its correctness for $N = n$ and all $k = 2, \ldots, \lceil n/2 \rceil$ by means of an embedded induction argument over $k$. We start by considering the case $k = \lceil n/2 \rceil$. The result of applying Algorithm \ref{Avalanche_analysis} to the case $N = 7$ and $k = 4$ is illustrated in Figure \ref{Figs_C}.

 \begin{figure}[H]
    \centering
    \captionsetup[subfigure]{justification=centering, labelformat=empty, singlelinecheck=false, width=0.6\linewidth}

    \begin{subfigure}[b]{0.45\textwidth}
        \centering
        \includegraphics[width=0.7\textwidth]{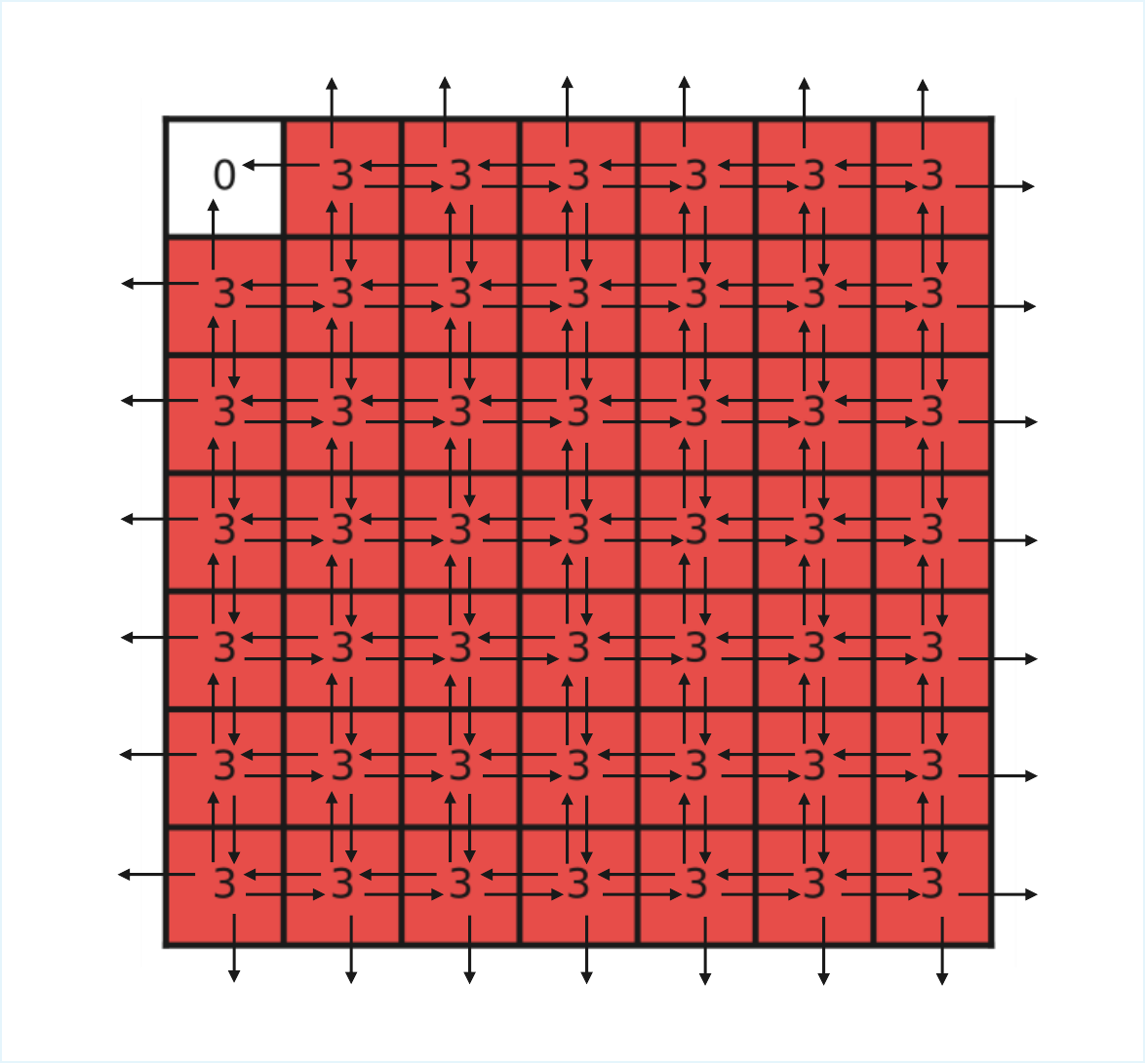}
        \caption[]%
        {{\small }}    
        \label{C}
    \end{subfigure}
    \begin{subfigure}[b]{0.45\textwidth}  
        \centering 
        \includegraphics[width=0.7\textwidth]{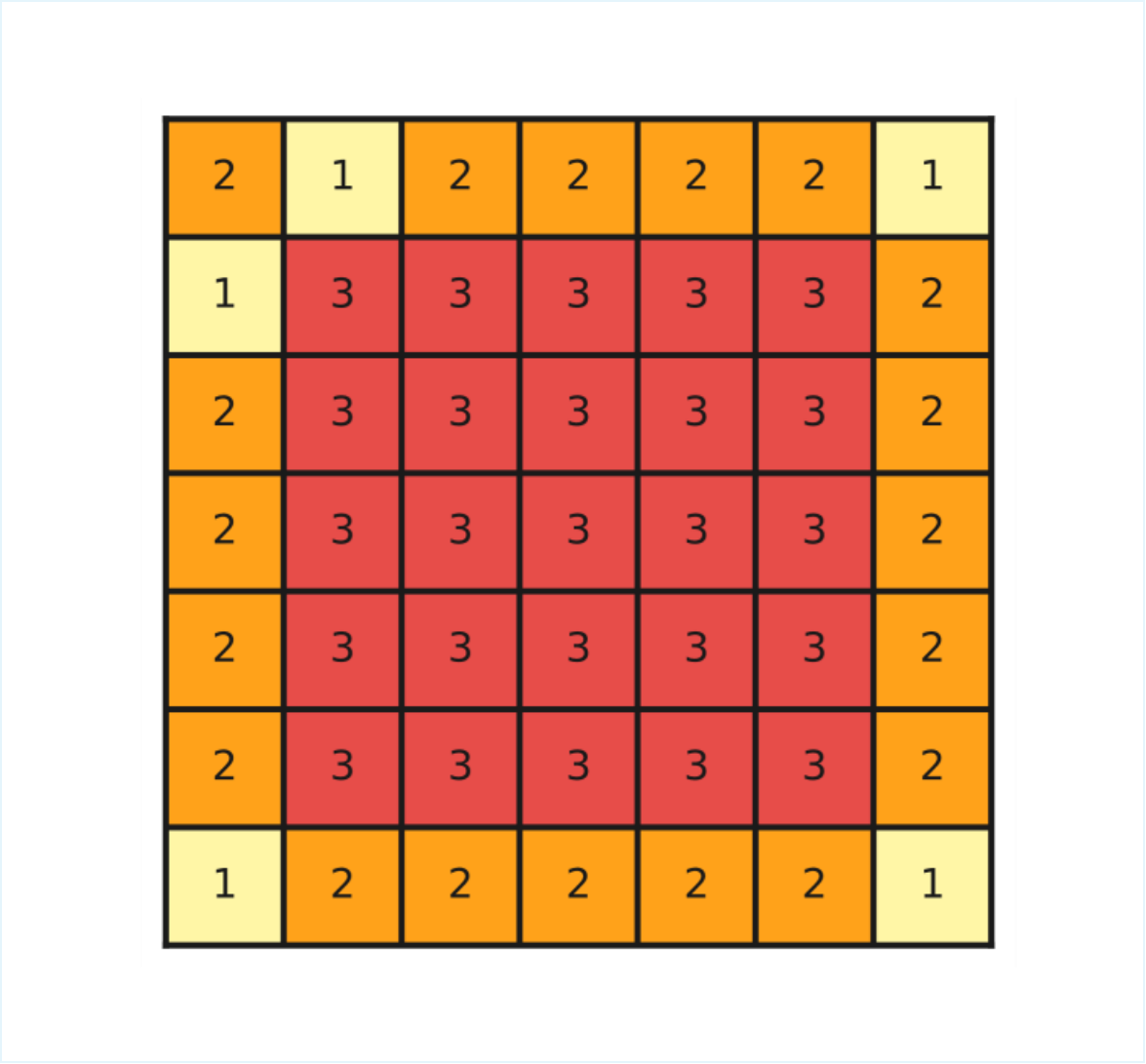}
        \caption[]%
        {{\small}}    
        \label{C_after}
    \end{subfigure}
    \caption{Evolution of Algorithm \ref{Avalanche_analysis} applied to generator $\tilde{A}_C^{7,4}$ in sandpile configuration $\gamma_i \eta$.} 
    \label{Figs_C}
    \end{figure}
Let the neighbours of $v_i$ be denoted by $v_{i_1}$ and $v_{i_2}$. In accordance with Algorithm \ref{Avalanche_analysis}, we initialize the directed graph $(V,E)$ with $E = \emptyset$ and add edges to $E$ from $v$ to each of its neighbours for each $v \in \tilde{A}_C^{n, \lceil n/2 \rceil}$. Observe that at this point, we have $(\gamma_i \eta)(v) + \text{indeg}(v) - \text{outdeg}(v) < 4$ for all $v \in V$. The resulting directed graph for the case $N = 7, k = 4$ is depicted in Figure \ref{Figs_C} (left). It follows that 
\begin{equation*}
    (\gamma_i \eta)^{\tilde{A}_C^{n, \lceil n/2 \rceil}}(v) = \begin{cases}
        3, &\text{if } v \in \text{int}(A^{(n)}), \\
        2, &\text{if } v \in (\delta^{\text{in}}(A^{(n)}) \setminus \{v_{i_1}, v_{i_2}\}) \cup \{v_i\}, \\
        1, &\text{if } v \in (C(A^{(n)}) \setminus \{v_i\}) \cup \{v_{i_1}, v_{i_2}\}.
    \end{cases}
\end{equation*}
This implies that $w_{\tilde{A}_C^{n, \lceil n/2 \rceil}}(\gamma_i \eta) = n^2-1$ and $\tilde{A}_{C,1}^{n, \lceil n/2 \rceil} := \{v \in \tilde{A}_C^{n, \lceil n/2 \rceil}|(\gamma_i \eta)^{\tilde{A}_C^{n, \lceil n/2 \rceil}}(v) = 3\} = \text{int}(A^{(N)})$. Using expression (\ref{exp_av_size_rec_eq}), the fact that $\text{int}(A^{(N)})$ is a square generator of size $(n-2) \times (n-2)$ and expression (\ref{exp_av_size_square}), we now obtain
\begin{align}
    \nonumber \mathbb{E}[X(\gamma_i \eta)|Y \in \tilde{A}^{n, \lceil n/2 \rceil}_C] &= n^2-1 + \dfrac{(n-2)^2}{n^2-1} \dfrac{3(n-2)^4+15(n-2)^3+20(n-2)^2-8}{30(n-2)} \\
    \label{Exp_av_size_C_int} &= \dfrac{3n^4 + 18n^3 + 38n^2 - 22n - 30}{30(n+1)}.
\end{align}
Note that for odd $n$, expression (\ref{Exp_av_size_C_int}) is equivalent to the first case in expression (\ref{Exp_av_size_C_2}) with $k = (n+1)/2$ and that for even $n$, it is equivalent to the second case in expression (\ref{Exp_av_size_C_2}) with $k = n/2$. Hence, expression (\ref{exp_av_size_square}) agrees with expression (\ref{Exp_av_size_C_2}) for $k = \lceil n/2 \rceil$. 

We now make the additional assumption that expression (\ref{Exp_av_size_C_2}) holds for $N = n$ and $k = \ell + 1$ for some $\ell~=~2, \ldots, \lceil N/2 \rceil -1$ and show that this implies its validity for $k = \ell$. Together with the fact that (\ref{Exp_av_size_C_2}) holds for $N = n$ and $k = \lceil n/2 \rceil$, this finalizes the embedded induction argument, establishing the statement for $N = n$ and all $k = 2, \ldots, \lceil n/2 \rceil$. The directed graph obtained from Algorithm \ref{Avalanche_analysis} and the sandpile configuration $(\gamma_i \eta)^{\tilde{A}_C^{n, \ell}}$ for the case $n = 7, \ell = 3$ are depicted in Figure \ref{Figs_C_case2}.

\begin{figure}[H]
   \begin{subfigure}[b]{0.45\textwidth}
        \centering
        \includegraphics[width=0.7\textwidth]{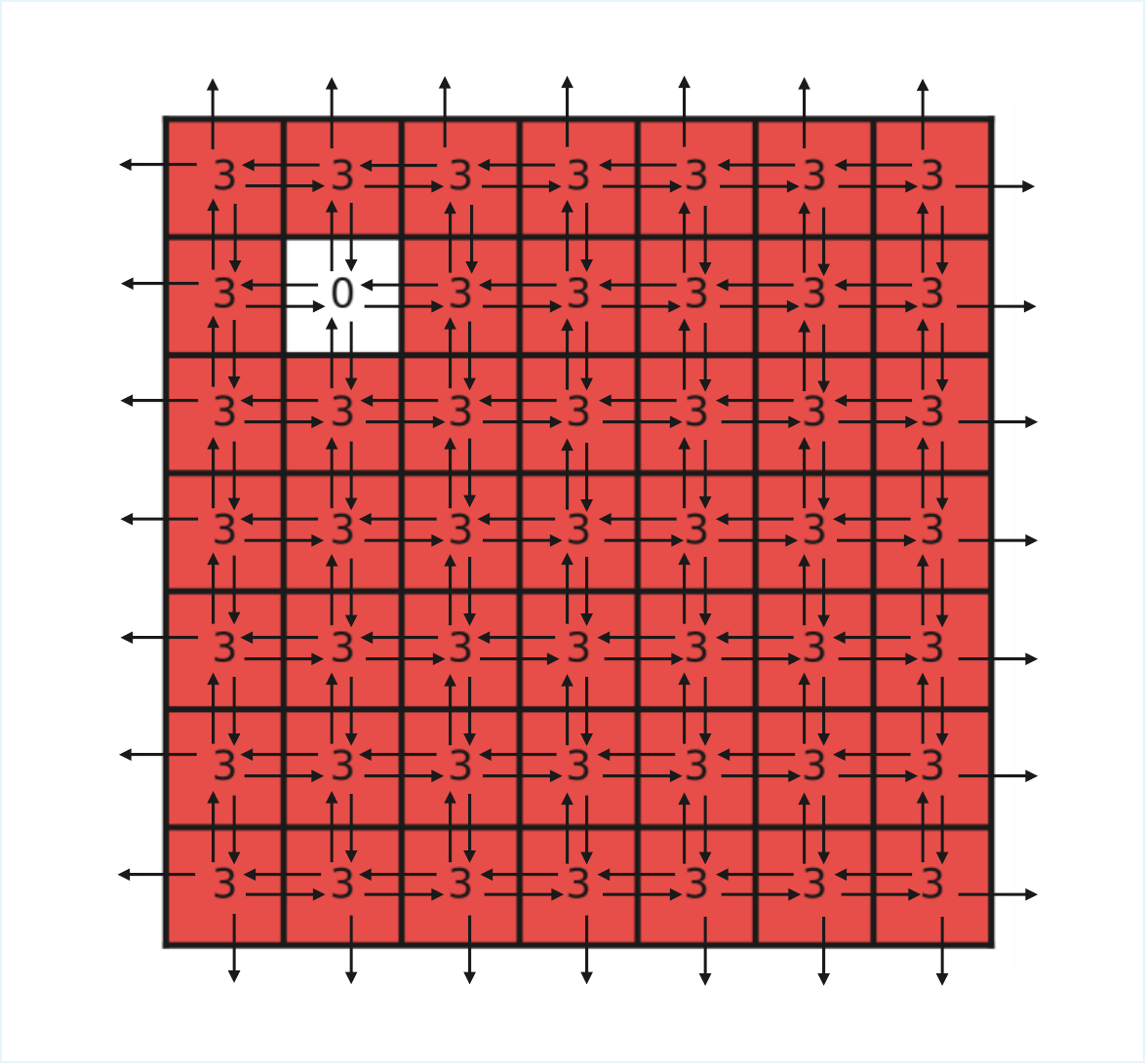}
        {{\small }}    
        \label{C_case2}
    \end{subfigure}
    \begin{subfigure}[b]{0.45\textwidth}  
        \centering 
        \includegraphics[width=0.7\textwidth]{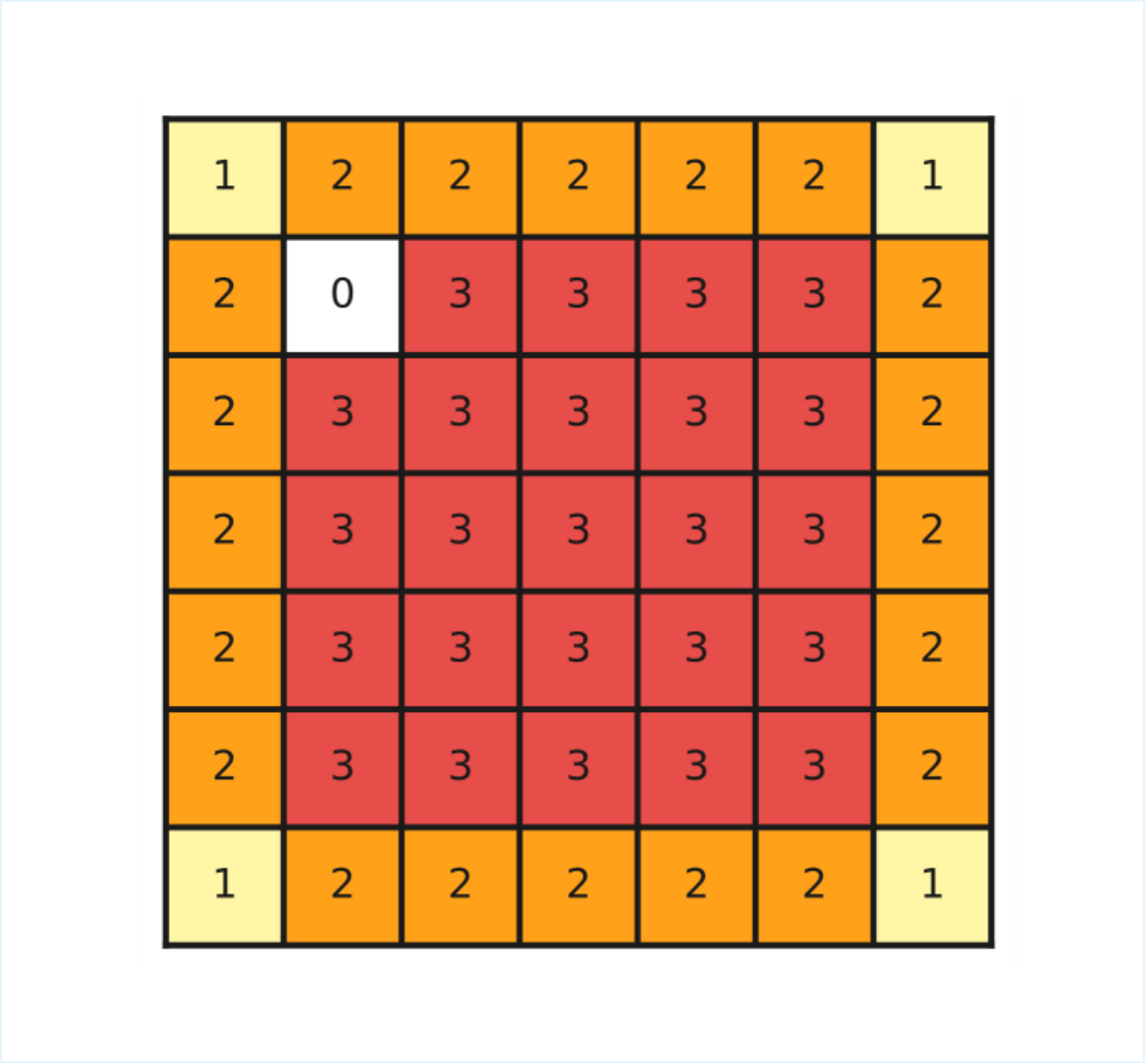}
        {{\small}}    
        \label{C_after_case2}
    \end{subfigure}
    \caption{Evolution of Algorithm \ref{Avalanche_analysis} applied to generator $\tilde{A}_C^{7,3}$ in sandpile configuration $\gamma_i \eta$.} 
    \label{Figs_C_case2}
    \end{figure}
After initializing the directed graph $(V, E)$ with $E = \emptyset$, we again add edges to $E$ from $v$ to each of its neighbours for each $v \in \tilde{A}_C^{n, \ell}$. Note that after this procedure, we have $(\gamma_i \eta)(v_i) + \text{indeg}(v_i) - \text{outdeg}(v_i) = 4$. Hence, we append edges to $E$ from $v_i$ to each of its neighbours. Now, we obtain $(\gamma_i \eta)(v) + \text{indeg}(v) - \text{outdeg}(v) < 4$ for all $v \in V$. The directed graph resulting from Algorithm \ref{Avalanche_analysis} is shown in Figure \ref{Figs_C_case2} (left). We derive that    
\begin{equation*}
    (\gamma_i \eta)^{\tilde{A}^{n, \ell}_C}(v) = \begin{cases}
        0, &\text{if } v = v_i, \\
        1, &\text{if } v \in C(A^{(N)}), \\
        2, &\text{if } v \in \delta^{\text{in}}(A^{(N)}), \\
        3, &\text{if } v \in \text{int}(A^{(N)}) \setminus \{v_i\}.
    \end{cases}
\end{equation*}
It follows that $w_{\tilde{A}_C^{n, \ell}}(\gamma_i \eta) = n^2$ and $\tilde{A}^{n, \ell}_{C,1}  := \{v \in \tilde{A}_C^{n, \ell}|(\gamma_i \eta)^{\tilde{A}_C^{n, \ell}}(v) = 3\} = \text{int}(A^{(N)}) \setminus \{v_i\}$. Inserting this in expression (\ref{exp_av_size_rec_eq}) and using the induction hypothesis, we now obtain
\begin{align*}
    & \mathbb{E}[X(\gamma_i \eta)|Y \in \tilde{A}_C^{n, \ell}] = n^2 + \dfrac{(n-2)^2-1}{n^2-1} \mathbb{E}[X((\gamma_i \eta)^{\tilde{A}_C^{n, \ell}})| Y \in \tilde{A}^{n, \ell}_{C,1}] \\
    &= n^2 + \dfrac{(n-2)^2-1}{n^2-1} \dfrac{3(n-2)^5+15(n-2)^4+15(n-2)^3-15(n-2)^2-18(n-2)+10(4\ell^3 - 24 \ell^2+23\ell - 3)}{30((n-2)^2-1)} \\
    &= \dfrac{3n^5 + 15n^4 + 15n^3 - 15n^2 - 18n + 10(4\ell^3 - 24 \ell^2 + 23 \ell - 3)}{30(n^2-1)}
\end{align*}
if $n$ is odd and
\begin{align*}
    & \mathbb{E}[X(\gamma_i \eta)|Y \in \tilde{A}_C^{n, \ell}] = n^2 + \dfrac{(n-2)^2-1}{n^2-1} \mathbb{E}[X((\gamma_i \eta)^{\tilde{A}_C^{n,\ell}})|Y \in \tilde{A}^{n, \ell}_{C,1}] \\
    &= n^2 + \dfrac{(n-2)^2-1}{n^2-1} \dfrac{3(n-2)^5 + 15(n-2)^4 + 15(n-2)^3 - 15(n-2)^2 - 18(n-2) + 10(4 \ell^3 - 18 \ell^2 + 2 \ell + 3)}{30((n-2)^2-1)} \\
    &= \dfrac{3n^5 + 15n^4 + 15n^3 - 15n^2 - 18n + 10(4\ell^3 - 18\ell^2 + 2\ell + 3)}{30(n^2-1)}
\end{align*}
if $n$ is even. This yields the correctness of expression (\ref{Exp_av_size_C_2}) for $N = n$ and all $\ell = 2, \ldots, \lceil n/2 \rceil$. The full induction argument now establishes the validity of expression (\ref{Exp_av_size_C_2}), and thus of expression (\ref{Exp_av_size_C}), for all $N \geq 3$ and all $k~=~2, \ldots, \lceil n/2 \rceil$.

\end{document}